% vim: foldmethod=marker: foldmarker=<<<,>>>:

% <<<
\documentclass[a4paper,leqno,10pt]{amsart}

\usepackage[T1]{fontenc}
\usepackage[utf8]{inputenc}

\usepackage{amsfonts,amsmath,amssymb,amsthm,mathtools}
\usepackage{lmodern}
\usepackage{tikz-cd}
\usepackage[english]{babel}
\usepackage{microtype}

\usepackage[normalem]{ulem}

\theoremstyle{plain}
\newtheorem{theo}{Theorem}[section]
\newtheorem*{theo*}{Theorem}
\newtheorem{prop}[theo]{Proposition}
\newtheorem{lemm}[theo]{Lemma}
\newtheorem{coro}[theo]{Corollary}
\newtheorem{conj}[theo]{Conjecture}
\theoremstyle{definition}
\newtheorem{defi}[theo]{Definition}
\newtheorem{exem}[theo]{Example}

\newtheorem{rema}[theo]{Remark}

\newcommand{\N}{\mathbf{N}}
\newcommand{\Z}{\mathbf{Z}}
\newcommand{\Q}{\mathbf{Q}}

\newcommand{\Zp}{\mathbf{Z}_p}

\newcommand{\Qp}{\mathbf{Q}_p}

\newcommand{\Fp}{\mathbf{F}_{\! p}}

\DeclareMathOperator{\gal}{Gal}
\let\temp\phi
\let\phi\varphi
\let\varphi\temp
\let\temp\epsilon
\let\epsilon\varepsilon
\let\varepsilon\epsilon
\let\hom\relax
\DeclareMathOperator{\hom}{Hom}
\let\ker\relax
\DeclareMathOperator{\ker}{Ker}
\DeclareMathOperator{\im}{Im}
\DeclareMathOperator{\coker}{Coker}
\DeclareFontFamily{U}{wncy}{}
\DeclareFontShape{U}{wncy}{m}{n}{<->wncyr10}{}
\DeclareSymbolFont{mcy}{U}{wncy}{m}{n}
\DeclareMathSymbol{\Sha}{\mathord}{mcy}{"58} 

\newcommand{\HH}{\mathcal{H}}
\newcommand{\II}{\mathcal{I}}
\newcommand{\LL}{\mathcal{L}}
\newcommand{\MM}{\mathcal{M}}
\newcommand{\OO}{\mathcal{O}}
\newcommand{\PP}{\mathcal{P}}
\newcommand{\Dcrisv}{\mathbf{D}_{\mathrm{cris},v}}

\newcommand{\A}{\mathbf{A}^+_{F_v}}
\newcommand{\Bcris}{\mathbf{B}_{\mathrm{cris}}}
\newcommand{\Brig}{\mathbf{B}^+_{\mathrm{rig},F_v}}
\newcommand{\hiw}{\operatorname{H}_{\mathrm{Iw}}^1}
\newcommand{\hur}{\operatorname{H}_{\mathrm{unr}}^1}
\DeclareMathOperator{\coh}{H}
\DeclareMathOperator{\ind}{Ind}
\DeclareMathOperator{\fil}{Fil}
\DeclareMathOperator{\frob}{Frob}
\DeclareMathOperator{\gl}{GL}
\DeclareMathOperator{\col}{Col}
\DeclareMathOperator{\sel}{Sel}
\DeclareMathOperator{\corank}{corank}
\DeclareMathOperator{\tam}{Tam}
\let\det\relax
\DeclareMathOperator{\det}{det}
% >>>

% <<<
\begin{document}
\author[G. Ponsinet]{Gautier Ponsinet}
\address{Max Planck Institut for Mathematics, Vivatsgasse 7, 53111 Bonn, Germany}
\email{gautier.ponsinet@mpim-bonn.mpg.de}

\title{On the structure of signed Selmer groups}
\date{\today}
\subjclass[2010]{11R23 (primary), 11G10, 11R18}
\keywords{Iwasawa theory, supersingular primes, abelian varieties}
\begin{abstract}
	Let $F$ be a number field unramified at an odd prime $p$ and
	$F_\infty$ be the $\Zp$-cyclotomic extension of $F$.
	Generalizing Kobayashi plus/minus Selmer groups for elliptic
	curves, B\"uy\"ukboduk and Lei have defined modified Selmer
	groups, called signed Selmer groups, for certain non-ordinary
	$\gal(\overline{F}/F)$-representations. In particular, their
	construction applies to abelian varieties defined over $F$ with
	good supersingular reduction at primes of $F$ dividing $p$.
	Assuming that these Selmer groups are cotorsion
	$\Zp[[\gal(F_\infty/F)]]$-modules, we show that they have no
	proper sub-$\Zp[[\gal(F_\infty/F)]]$-module of finite index. We
	deduce from this a number of arithmetic applications. On
	studying the Euler-Poincaré characteristic of these Selmer
	groups, we obtain an explicit formula on the size of the
	Bloch-Kato Selmer group attached to these representations.
	Furthermore, for two such representations that are isomorphic
	modulo $p$, we compare the Iwasawa-invariants of their signed
	Selmer groups.
\end{abstract}
\maketitle
\tableofcontents
% >>>

\section*{Introduction}
Let $F$ be a number field and $E$ be an elliptic curve defined over $F$.
Let $p$ be an odd prime and $F_\infty$ the $\Zp$-cyclotomic extension of
$F$ (see \S \ref{subsec:cyclo}). On the algebraic side of the Iwasawa
theory for $E$ developed by Mazur~\cite{Mazur72} is the $p$-Selmer group
associated to $E$ over $F_\infty$, denoted by $\sel_p(E/F_\infty)$,
which is naturally a discrete $\Zp[[\gal(F_\infty/F)]]$-module. The
Selmer group contains arithmetic information of the curve,
\textit{e.g.} it fits in the exact sequence of groups
\[
	0 \rightarrow E(F_\infty) \otimes \Qp/\Zp \rightarrow
	\sel_p(E/F_\infty) \rightarrow \Sha_p(E/F_\infty) \rightarrow 0,
\]
where $E(F_\infty)$ is the group of $F_\infty$-rational points and
$\Sha_p(E/F_\infty)$ the $p$-primary component of the Tate-Shafarevich
group of $E$ over $F_\infty$. One goal of Iwasawa theory is to
understand the structure of $\sel_p(E/F_\infty)$ as
$\Zp[[\gal(F_\infty/F)]]$-module. 

When $E$ has good ordinary reduction at primes of $F$ dividing $p$, a
conjecture of Mazur (proved by Kato~\cite{Kato04} when $F=\Q$) states that the
Pontryagin dual of $\sel_p(E/F_\infty)$ is a torsion
$\Zp[[\gal(F_\infty/F)]]$-module. Furthemore, assuming Mazur's
conjecture and that the $E(F)$ has no $p$-torsion,
Greenberg~\cite[Proposition 4.14]{GreenbergIwasawaEll}
showed that $\sel_p(E/F_\infty)$ has no proper
sub-$\Zp[[\gal(F_\infty/F)]]$-module of finite index.

When $E$ has good supersingular reduction at some prime above $p$, the
Selmer group over $F_\infty$ is no longer a cotorsion
$\Zp[[\gal(F_\infty/F)]]$-module. In the case $F=\Q$ and $a_p=0$ (holds
whenever $p\geqslant 5$), Kobayashi~\cite{Kobayashi} has defined the so
called plus and minus (or signed) Selmer groups,  and proves that the
Pontryagin duals of these Selmer groups are torsion
$\Zp[[\gal(\Q_\infty/\Q)]]$-modules. Kim~\cite{KimSubmodules} has then extended the
definition of these Selmer groups to number fields $F$ where $p$ is
unramified and generalized Greenberg's result and showed that if the
signed Selmer groups of $E$ over $F_\infty$ are cotorsion
$\Zp[[\gal(F_\infty/F)]]$-modules, then they have no proper
submodule of finite index (for one of the signed Selmer group, namely
the plus one, he requires the additional assumption that $p$ splits
completely in $F$ and is totally ramified in $F_\infty$). This
assumption has recently been removed by Kitajima and Otsuki, see
\cite{KitajimaOtsuki18}. 

Kobayashi's construction of signed Selmer groups has been generalized to
many situations (\cite{IovitaPollack,LLZWach,KimZp2,BLIntegral}). 
In~\cite{BLIntegral}, using $p$-adic Hodge theory machinery,
B\"uy\"ukboduk and Lei have defined signed Selmer groups for certain
non-ordinary Galois representations of $\gal(\overline{F}/F)$ (see \S
\ref{subsec:motives} and \ref{subsec:Dieudonne} for hypotheses). In
particular, their construction applies to abelian varieties defined over
$F$ with good supersingular reduction at primes of $F$ dividing $p$. The
definition of the signed Selmer groups depends on a choice of a basis
for the Dieudonn\'e module associated to the representation.  For such a
basis, we may attach to each of its subset $\underline{I}$ of some
prescribed cardinality a signed Selmer group, which we denote in this
introduction by $\sel_{\underline{I}}(T/F_\infty)$, where $T$ is a
Galois representation (a free $\Zp$-module of finite rank with a
continuous action of the absolute Galois group of $F$) to which the
construction of \textit{op.  cit.} applies. They conjectured these signed
Selmer groups to be cotorsion $\Zp[[\gal(F_\infty/F)]]$-modules.  Let
$T^*$ be the Tate dual of $T$. The Dieudonn\'e module associated to
$T^*$ is the dual of the Dieudonn\'e module of $T$ and we denote by
$\underline{I^c}$ the ``subbasis'' dual to $\underline{I}$. We prove:
\begin{theo*}[{{Theorem \ref{theo:submodules}}}]
	Assume that the Pontryagin dual of both $\sel_{\underline{I}}(T/F_\infty)$ and
	$\sel_{\underline{I^c}}(T^*/F_\infty)$ are torsion
	$\Zp[[\gal(F_\infty/F)]]$-modules, then
	$\sel_{\underline{I}}(T/F_\infty)$ has no proper
	sub-$\Zp[[\gal(F_\infty/F)]]$-module of finite index.
\end{theo*}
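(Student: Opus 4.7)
The plan is to reformulate the statement via Pontryagin duality and then exploit global Tate duality. Set $\Lambda := \Zp[[\gal(F_\infty/F)]]$. By Pontryagin duality, the stated conclusion is equivalent to the assertion that the dual $X := \sel_{\underline{I}}(T/F_\infty)^\vee$ has no non-zero finite $\Lambda$-submodule, and I would prove this latter formulation, following the blueprint laid down by Greenberg in the ordinary case and adapted by Kim to the $a_p = 0$ supersingular case. The starting point is to combine the defining short exact sequence
\[
0 \to \sel_{\underline{I}}(T/F_\infty) \to H^1(G_S(F_\infty), T\otimes\Qp/\Zp) \xrightarrow{\lambda_{\underline{I}}} \bigoplus_{v\in S}\frac{H^1(F_{\infty,v}, T\otimes\Qp/\Zp)}{H^1_{\underline{I},v}}
\]
with the Cassels--Poitou--Tate four-term sequence arising from global duality. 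The orthogonality of the signed local conditions at primes above $p$ for $T$ and $T^*$ indexed by $\underline{I}$ and $\underline{I^c}$, which is built into the construction of \cite{BLIntegral}, is what makes these two sequences compatible and allows the hypothesis on $\sel_{\underline{I^c}}(T^*/F_\infty)^\vee$ to enter.

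The technical heart of the argument is to verify three absence-of-finite-$\Lambda$-submodule properties: (a) for the global cohomology $H^1(G_S(F_\infty), T\otimes\Qp/\Zp)$; (b) for the local quotient at each $v\in S$ with $v\nmid p$; and (c) for the signed local quotient at each $v\mid p$. Property (a) is a standard consequence of Jannsen's theorem on the absence of pseudo-null submodules in Iwasawa cohomology, once one checks that the $G_{F_\infty}$-invariants of the residual representation are trivial (this will follow from the running assumptions on $T$). Property (b) reduces to the classical $\Lambda$-cofreeness of unramified Iwasawa cohomology at primes of good reduction away from $p$. Property (c) is the delicate step, and must be extracted from the B\"uy\"ukboduk--Lei machinery by showing that the Coleman-type maps attached to the chosen basis of the Dieudonn\'e module embed the Pontryagin dual of the local quotient into a free $\Lambda$-module of finite rank, so that its dual is torsion-free and, a fortiori, admits no non-zero finite $\Lambda$-submodule.

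Once (a)--(c) are in hand, the hypothesis that $\sel_{\underline{I^c}}(T^*/F_\infty)^\vee$ is $\Lambda$-torsion is used through Cassels--Poitou--Tate to force the cokernel of $\lambda_{\underline{I}}$ to itself be $\Lambda$-cotorsion; a standard diagram chase on the dualized sequences, modeled on the proof of \cite[Proposition 4.14]{GreenbergIwasawaEll} and its supersingular adaptations by Kim, then transfers the property from the terms (a)--(c) to $X$. The principal obstacle is property (c): in Kim's elliptic-curve setting this was carried out via Kobayashi's explicit description of the plus/minus norm subgroups and their duals, whereas here the argument must be reworked in the language of Wach modules and $(\phi,\Gamma)$-modules over $\A$. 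This is precisely the point at which the hypothesis that $p$ be unramified in $F$ enters in an essential way, as it guarantees that the Wach module has an integral basis adapted to the Frobenius-filtration structure and that Perrin-Riou's exponential has a target which is $\Lambda$-cofree of the expected rank.
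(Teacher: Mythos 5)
Your blueprint correctly identifies the Greenberg--Kim circle of ideas (dualize, invoke global duality, use the orthogonality of the signed conditions and the structure of the Coleman maps at $p$), but the concluding step has a genuine gap. From the dualized sequence $0 \rightarrow \PP_{\Sigma,\underline{I}}(M^*/F_\infty)^\vee \rightarrow \coh^1(F_\Sigma/F_\infty,M^*)^\vee \rightarrow X \rightarrow 0$, the absence of nonzero finite submodules in the middle and left-hand terms does \emph{not} transfer to the quotient $X$ by any diagram chase: a quotient of modules without finite submodules can itself be finite, as the sequence $0 \rightarrow (p,X) \rightarrow \Zp[[X]] \rightarrow \Zp[[X]]/(p,X) \rightarrow 0$ shows. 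Homologically, what you would need is $\operatorname{Ext}^2_{\Zp[[\Gamma]]}(X,\Zp[[\Gamma]])=0$, and the exact sequence only exhibits this group as a quotient of $\operatorname{Ext}^1_{\Zp[[\Gamma]]}(\PP^\vee,\Zp[[\Gamma]])$, which is nonzero because the local terms away from $p$ are torsion; so your properties (a)--(c) are not enough, and the step "a standard diagram chase transfers the property to $X$" is precisely where the real work lies. The paper supplies the missing mechanism differently: it shows that the $\Gamma$-coinvariants of all but finitely many cyclotomic twists of the Selmer group vanish, which is the criterion actually used to rule out proper finite-index submodules. That vanishing is obtained from the exact sequence of $\Gamma$-(co)invariants, using surjectivity of $\coh^1(F_\Sigma/F_\infty,M^*_s)^\Gamma \rightarrow \PP_{\Sigma,\underline{I}}(M^*_s/F_\infty)^\Gamma$ together with the vanishing of the coinvariants of the global $\coh^1$, the latter resting on $\coh^2(F_\Sigma/F_\infty,M^*)=0$ (Proposition~\ref{prop:submodule}).

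Two further points. First, you only extract from the hypothesis on the dual Selmer group that the cokernel of the localization map is cotorsion; the argument needs genuine surjectivity, and in the paper this is Proposition~\ref{prop:surjection}, proved by a twisting trick: the control theorem (Lemma~\ref{lemm:control}) converts cotorsionness of $\sel_{\underline{I}^c}(M/F_\infty)$ into finiteness of twisted Selmer groups at each finite level, the divisibility of the signed local conditions (Lemma~\ref{lemm:divisible}, where the orthogonality Lemma~\ref{lemm:KerOrthoComplement} enters) makes Greenberg's Proposition~4.13 applicable there, and one passes to the limit; your sketch has no analogue of this finite-level/twisting input, nor of the local control Lemma~\ref{lemm:surjection}, which uses the freeness statements you cite in (c) but in the form $\coh^1_{I_v}(F_{v,\infty},M^*)_\Gamma=0$. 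Second, your justification of (a) via Jannsen plus triviality of residual invariants is not sufficient: in the paper the no-finite-submodule (indeed the stronger coinvariant) statement for $\coh^1(F_\Sigma/F_\infty,M^*)$ comes from $\coh^2(F_\Sigma/F_\infty,M^*)=0$, which is deduced from a corank count using the cotorsion hypothesis on $\sel_{\underline{I}}(M^*/F_\infty)$ itself --- a hypothesis your proposal never invokes, which is a warning sign that the intended argument cannot close as written.
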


For a good choice of basis of the Dieudonn\'e module, one can relate the
signed Selmer groups to Bloch-Kato's Selmer groups. For such a basis,
assuming that the Bloch-Kato Selmer group of $T$ over $F$ is finite, our
theorem above allows us to  employ Greenberg's strategy in
\cite[Theorem 4.1]{GreenbergIwasawaEll}  to
compute the Euler-Poincar\'e characteristic of the signed Selmer groups.
We may relate the leading term of the characteristic series of these
Selmer groups to a product of Tamagawa numbers associated to the
represenatation and the cardinal of the Bloch-Kato's Selmer group (see
Corollary~\ref{coro:EPchar}).

In the final part of the article, we study congruences of signed Selmer
groups. If $E$ and $E^\prime$ are elliptic curves defined over $\Q$ with
good ordinary reduction at $p$ and such that $E[p] \simeq E^\prime[p]$
as Galois modules, Greenberg and Vatsal~\cite{GreenbergVatsal} have
studied the consequences of such a congruences in Iwasawa theory. In
particular, assuming Mazur's conjecture, they proved that the
$\mu$-invariant of $\sel_p(E/\Q_\infty)$ vanishes if and only if that of
$\sel_p(E^\prime/\Q_\infty)$ vanishes, and that when these $\mu$-invariants do
vanish, the $\lambda$-invariants of some \emph{non-primitive Selmer
groups} associated to $E$ and $E^\prime$ over $\Q_\infty$ are equal.
Kim~\cite{KimInvariants} generalized this result to the plus and minus
Selmer groups in the supersingular case. We prove a version of this
result in the setting of \cite{BLIntegral}.
\begin{theo*}[{{Theorem \ref{theo:congruences}}}]
	Let $T$ and $T^\prime$ be Galois representations to which the
	construction of \cite{BLIntegral} applies.  Assume that $T/pT
	\simeq T^\prime/pT^\prime$ as Galois modules and that the
	Pontryagin dual of the signed Selmer groups associated to $T$,
	$T^*$, $T^\prime$ and $T^{\prime,*}$ are torsion
	$\Zp[[\gal(F_\infty/F)]]$-modules.
	Then the $\mu$-invariant of $\sel_{\underline{I}}(T/F_\infty)$
	vanishes if and only if that of
	$\sel_{\underline{I}}(T^\prime/F_\infty)$ vanishes. Furthemore, when
	these $\mu$-invariants do vanish, the $\lambda$-invariants of
	the $\underline{I}$-signed non-primitive Selmer groups
	associated to $T$ and $T^\prime$ over $F_\infty$ are equal.
\end{theo*}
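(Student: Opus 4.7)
My plan is to adapt the Greenberg--Vatsal strategy, as extended to the supersingular setting by Kim \cite{KimInvariants}, to the $p$-adic Hodge-theoretic framework of \cite{BLIntegral}. Set $\Lambda = \Zp[[\gal(F_\infty/F)]]$. The underlying enabling lemma is standard: if $X$ is a cofinitely generated cotorsion $\Lambda$-module whose Pontryagin dual admits no nonzero finite $\Lambda$-submodule, then $\mu(X^\vee) = 0$ if and only if $X[p]$ is finite dimensional over $\Fp$, and in that case $\lambda(X^\vee) = \dim_{\Fp} X[p]$. Theorem~\ref{theo:submodules}, applied to the signed Selmer groups attached to both $T$ and $T^\prime$ (using that all four of $T, T^*, T^\prime, T^{\prime,*}$ have cotorsion signed Selmer groups), supplies precisely this no-finite-submodule property.

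I would next fix a finite set $\Sigma$ of places of $F$ containing the archimedean places, the places above $p$, and the places of bad reduction for $T$ or $T^\prime$, and define the $\underline{I}$-signed non-primitive Selmer group $\sel^\Sigma_{\underline{I}}(T/F_\infty)$ by relaxing the local conditions at all finite places in $\Sigma$ not above $p$. The tautological exact sequence
\[
	0 \to \sel_{\underline{I}}(T/F_\infty) \to \sel^\Sigma_{\underline{I}}(T/F_\infty) \to \bigoplus_{v \in \Sigma,\, v\nmid p\infty} \coh^1(F_{\infty,v}, T\otimes\Qp/\Zp) \to 0,
\]
combined with the triviality of the $\mu$-invariants of the local cohomology groups on the right, shows that the $\mu$-invariants of the primitive and non-primitive signed Selmer groups coincide; a snake-lemma chase transports the no-finite-submodule property from $\sel_{\underline{I}}(T/F_\infty)$ to $\sel^\Sigma_{\underline{I}}(T/F_\infty)$.

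The crux of the argument is to produce, from $T/pT \simeq T^\prime/pT^\prime$, a Galois-equivariant isomorphism $\sel^\Sigma_{\underline{I}}(T/F_\infty)[p] \simeq \sel^\Sigma_{\underline{I}}(T^\prime/F_\infty)[p]$. Since only local conditions at places above $p$ remain in the non-primitive Selmer group, it suffices to verify that the $\underline{I}$-signed local condition at each $v \mid p$ reduces modulo $p$ to a subspace of $\coh^1(F_{\infty,v}, T/pT)$ depending only on $T/pT$. This requires a careful inspection of the integral Coleman-type maps of \cite{BLIntegral}, which are constructed from the Wach modules attached to $T_{|G_{F_v}}$, and showing that their mod $p$ reductions factor through data depending only on $T/pT$ will be the main technical obstacle.

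Once the mod $p$ isomorphism is in hand, both assertions of the theorem follow from the enabling lemma: the two mod $p$ non-primitive Selmer groups have the same $\Fp$-dimension, so their $\mu$-invariants vanish together; and when they do, this common dimension equals the $\lambda$-invariant of the corresponding $\underline{I}$-signed non-primitive Selmer group on either side.
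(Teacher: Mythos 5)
Your overall architecture matches the paper's: pass to non-primitive signed Selmer groups, use the no-proper-finite-index-submodule property (Theorem~\ref{theo:submodules} and its non-primitive analogue, Proposition~\ref{prop:submodulesNonPrimitive}) to convert vanishing of $\mu$ into finiteness of the $p$-torsion and to read off $\lambda$ as $\dim_{\Fp}$ of the $p$-torsion, and then compare the two $p$-torsion groups via the congruence. (Two small caveats on that part: the right-exactness of your ``tautological'' sequence is not tautological --- it is the global-to-local surjectivity of Proposition~\ref{prop:surjection}, available only under the cotorsion hypothesis on the dual Selmer group; and your snake-lemma transport of the no-finite-submodule property needs the divisibility of $\coh^1(F_{\infty,w},M^*)$ at $w\in\Sigma_0$, which you should justify --- the paper instead reruns the proof of Theorem~\ref{theo:submodules} for the non-primitive groups.)

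The genuine gap is the step you yourself flag as ``the main technical obstacle'': showing that the $\underline{I}$-signed local condition at $v\mid p$ reduces mod $p$ to something determined by the congruence. This is not a routine verification and it is the actual new content of the paper's Section~3. Note first that the local condition does not depend only on $T/pT$: it depends on the chosen Hodge-compatible basis of $\Dcrisv(T)$, so one must fix bases for $T$ and $T'$ compatible under the identification of Dieudonn\'e modules mod $p$. The paper's route is: Berger's theorem (\cite[Th\'eor\`eme IV.1.1]{BergerLimites}, usable because the Hodge--Tate weights lie in $[0,1]$) upgrades $T/pT\simeq T'/pT'$ to an isomorphism of Wach modules $\N_v(T)/p\simeq \N_v(T')/p$ compatible with $\phi$, the filtration and the Galois action; one then tracks this congruence through the construction of Perrin-Riou's map and the logarithmic matrices $C_{v,n}\cdots C_{v,1}$ (Lemmas~\ref{lemm:modpLn} and~\ref{lemm:modpColn}, using that $\Phi_{p^n}(1+X)$ is coprime to $p$ to cancel the matrices), obtaining $\col_T\equiv\col_{T'}\bmod p$ (Proposition~\ref{prop:modplocalp}); finally, by Tate duality the Pontryagin dual of $\coh^1_{I_v}(F_{v,\infty},M^*)[p]$ is $(\im\col_{T,I_v})/p$, which gives the identification of the mod-$p$ local conditions. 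Without supplying this chain (or an equivalent argument), the comparison of $\sel^{\Sigma_0}_{\underline{I}}(M^*/F_\infty)[p]$ with $\sel^{\Sigma_0}_{\underline{I}}(M^{\prime,*}/F_\infty)[p]$ --- and hence both assertions of the theorem --- remains unproved.
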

The main ingredient is a result of Berger~\cite{BergerLimites} who showed that the
congruence $T/pT \simeq T^\prime/pT^\prime$ of Galois module induces a
congruence modulo $p$ on the Wach module associated to $T$ and
$T^\prime$. This allows to keep track of the congruence through
B\"uy\"ukboduk and Lei's construction.

\subsection*{Acknowledgement}
This article is part of the author's Ph.D. thesis at University Laval. The author would like
to thank his supervisor Antonio Lei for his support and patient
explanation throughout this project, as well as Kazim B\"uy\"ukboduk, 
Daniel Delbourgo and Jeff Hatley for their time and help.
The author would also like to thank B.D. Kim.
Finally, the author thank the anonymous referee for very useful comments and suggestions on an earlier version of the article, which led to many improvements.

\section{Coleman maps and signed Selmer groups}
In this section, we fix notations and recall results from
\cite{BLIntegral} that we shall need.

\subsection{Cyclotomic extension and Iwasawa algebra}
\label{subsec:cyclo}
Choose once and for all an odd prime $p$. Let $F$ be a number field 
unramified at $p$. We fix $\overline{F}$ an algebraic closure of $F$ and
denote by $G_F = \gal(\overline{F}/F)$ the absolute Galois group of $F$.
If $v$ is a prime of $F$, we denote by $F_v$ the completion of $F$ at
$v$, $\OO_{F_v}$ its ring of integers and $G_{F_v}$ the decomposition
subgroup of $v$ in $G_F$. Let $\mu_{p^n}$ be the group of $p^n$-th roots
of unity for every $n \geqslant 1$ and $\mu_{p^\infty} = \cup_{n
\geqslant 1} \mu_{p^n}$. We set $F(\mu_{p^\infty}) = \cup_{n \geqslant
1} F(\mu_{p^n})$ the $p^\infty$-cyclotomic extension of $F$ inside
$\overline{F}$. For every $n \geqslant 1$, we choose a generator
$\epsilon^{(n)}$ of $\mu_{p^n}$ with the compatibilities
$(\epsilon^{(n+1)})^p = \epsilon^{(n)}$, so that $\varprojlim_n
\epsilon^{(n)}$ is a generator of $\varprojlim_n \mu_{p^n} \simeq
\Zp(1)$. The cyclotomic character $\chi : G_F \rightarrow \Zp^*$ is
defined by the relations $g(\epsilon^{(n)}) =
(\epsilon^{(n)})^{\chi(g)}$ and it induces a isomorphism $\chi
: \gal(F(\mu_{p^\infty})/F) \simeq \Zp^*$. In particular, the group
$\gal(F(\mu_{p^\infty})/F)$ decomposes as $\Gamma \times \Delta$ with
$\Gamma \simeq \Zp$ and $\Delta \simeq \Z/(p-1)\Z$. For every $n
\geqslant 0$, we denote by $\Gamma_n$ the unique subgroup of $\Gamma$ of
index $p^n$. We set $F_\infty = F(\mu_{p^\infty})^\Delta$ and $F_n =
F_\infty^{\Gamma_n}$ for every $n \geqslant 0$.

For $n \geqslant 1$, we set $\Lambda_n =
\Zp[\gal(F(\mu_{p^n}/F)]$. Let $\Lambda =
\Zp[[\gal(F(\mu_{p^\infty})/F)]] = \varprojlim_n \Lambda_n$ be the Iwasawa algebra
of $\gal(F(\mu_{p^\infty})/F)$ over $\Zp$. The above decomposition of
$\gal(F(\mu_{p^\infty})/F)$ implies that $\Lambda =
\Zp[\Delta][[\Gamma]]$. Furthermore, we have an isomorphism
$\Zp[[\Gamma]] \simeq \Zp[[X]]$ induced by $\gamma \mapsto X+1$ where
$\gamma$ is a topological generator of $\Gamma$. For $n \geqslant 1$,
let $\omega_n(X) = (X+1)^p-1$, then this isomorphism induces $\Lambda_n
\simeq \Zp[\Delta][X]/(\omega_n)$.

For a Dirichlet character $\eta$ on $\Delta$ and a $\Lambda$-module $R$,
let $R^\eta$ be the isotypic component of $R$, which is given by
$e_\eta \cdot R$ where $e_\eta = \frac{1}{| \Delta |} \sum_{\delta \in
\Delta} \eta^{-1}(\delta)\delta$. Note that $R^\eta$ is naturally a
$\Zp[[\Gamma]]$-module. We will say that a $\Lambda$-module $R$ has rank
$r$ if $R^\eta$ has rank $r$ over $\Zp[[\Gamma]]$ for all characters
$\eta$ on $\Delta$.

Given a finitely generated torsion $\Zp[[\Gamma]]$-module $R$,
there exists a pseudo-isomorphism (\textit{i.e.} a morphism of
$\Zp[[\Gamma]]$-modules with finite kernel and cokernel)
\[
	R \rightarrow \bigoplus_{i=1}^n \Zp[[\Gamma]]/(p^{l_i}) \oplus
	\bigoplus_{j=1}^m \Zp[[\Gamma]]/(f_j^{k_j})
\]
where $f_j \in \Zp[X]$ are distinguished irreducible polynomials
(identifying $\Zp[[\Gamma]]$ and $\Zp[[X]]$). Furthermore,
the ideals $(p^{l_j})$ and $(f_j^{k_j})$ are uniquely determined by $R$ up to ordering.
The characteristic ideal of $R$ is then defined by $\prod_{i,j}
(p^{l_i})\cdot(
f_j^{k_j} ) \subset
\Zp[[\Gamma]]$. The $\mu$-invariant of $R$ is defined by $\sum_{i=1}^n
l_i$ and the $\lambda$-invariant of $R$ by $\sum_{j=1}^m k_j\cdot \deg
f_j$.

\subsection{Motives} \label{subsec:motives}
Let $\MM$ be a motive defined over $F$ with coefficients in $\Q$ in the sense of \cite{FontainePerrinRiou}.
We denote by $\MM_p$ its $p$-adic realization and we fix $T$ a
$G_F$-stable $\Zp$-lattice inside $\MM_p$.
Let $g = \dim_{\Qp}(\ind_F^\Q \MM_p)$ and $g_+ = \dim_{\Qp}(\ind_F^\Q
\MM_p)^+$ the dimension of the $+1$-eigenspace under the action of a
fixed complex conjugation on $\ind_F^\Q \MM_p$. We set $g_- = g - g_+$.
For every prime $v$ of $F$ dividing $p$, let $g_v =
\dim_{\Qp}(\ind_{F_v}^{\Qp}
\MM_p)$. We have $g = \sum_{v \mid p} g_v$.

We will assume that, for every prime $v$ of $F$ dividing $p$,
\begin{enumerate}
	\item[\textbf{(H.-T.)}] the Hodge-Tate weights of $\MM_p$,
		as a
		$G_{F_v}$-representation, are in
		$[0,1]$,
	\item[\textbf{(Cryst.)}] the $G_{F_v}$-representation $\MM_p$ is
		crystalline,
	\item[\textbf{(Tors.)}] the Galois cohomology groups
		$\coh^0(F_v,T/pT)$ and $\coh^2(F_v,T/pT)$ are trivial.
\end{enumerate}

We denote by $T^* = \hom(T,\Zp(1))$ the Tate dual of $T$ and we set 
\[
	M = T \otimes \Qp/\Zp, \quad \text{and}, \quad M^* = T^* \otimes \Qp/\Zp.
\]
We remark that the dual of $\MM$, which we denote by $\MM^*$, satisfies the hypothesis \textbf{(Cryst.)}
and \textbf{(H.-T.)}, and $T^*$, which is a $G_F$-stable $\Zp$-lattice 
inside its $p$-adic realization $\MM_p^*$, satisfies \textbf{(Tors.)}.

We also fix $\Sigma$ a finite set of primes of $F$ containing the primes
dividing $p$, the archimedean primes and the primes of ramification of
$M^*$. Let $F_\Sigma$ be the maximal extension of $F$
unramified outside $\Sigma$, so that $M^*$ is a
$\gal(F_\Sigma/F)$-module. We remark that $F(\mu_{p^\infty}) \subseteq
F_\Sigma$ since only primes above $p$ and $\infty$ can be ramified in
$F(\mu_{p^\infty})$. If $F^\prime$ is an extension of $F$ in
$F(\mu_{p^\infty})$, we will say by abuse that a prime of $F^\prime$
lies in $\Sigma$ if it divides a prime of $F$ which is in $\Sigma$.

\subsection{Dieudonn\'e modules} \label{subsec:Dieudonne}
If $v$ is a prime of $F$ dividing $p$, let $\Dcrisv(T)$ be the
Dieudonn\'e module associated to $T$ considered as a
$G_{F_v}$-representation \cite[D\'efinition V.1.1]{BergerLimites}. Then $\Dcrisv(T)$ is a free
$\OO_{F_v}$-module of rank $\dim_{\Qp} \MM_p$ equipped with a filtration
of $\OO_{F_v}$-modules $(\fil^i \Dcrisv(T))_{i \in \Z}$ such that
\[
	\fil^i \Dcrisv(T) = \left\{ \begin{array}{lr} 
		0 & \text{for } i \geqslant 1, \\
		\Dcrisv(T) & \text{for } i \leqslant -1.
	\end{array} \right.
\]
Furthermore, $\Dcrisv(\MM_p) \coloneqq \Dcrisv(T) \otimes \Qp$ is the usual
Fontaine's filtered $\phi$-module associated to $\MM_p$.

We will assume that
\begin{enumerate}
	\item[\textbf{(Fil.)}] $\sum_{v \mid p} \dim_{\Qp} \fil^0 \Dcrisv(T)
		\otimes \Qp = g_-$,
	\item[\textbf{(Slopes)}] the slopes of $\phi$ are in $]-1,0[$.
\end{enumerate}

We may choose $\{u_1,\ldots,u_{g_v}\}$ a $\Zp$-basis of $\Dcrisv(T)$
such that $\{u_1,\ldots,u_{d_v}\}$ is a basis for $\fil^0\Dcrisv(T)$ for
some $d_v$. We call such a basis a \emph{Hodge-compatible basis} and fix one for the
rest of the paper. Then, from our hypotheses, the matrix of the crystalline
Frobenius $\phi$ with respect to this basis is of the form
\begin{equation} \label{eq:matriceFrobenius}
	C_{\phi,v} = C_v \left(\begin{array}{c|c}
		I_{d_v} & 0 \\
		\hline
		0 & \frac{1}{p} I_{g_v - d_v}
	\end{array}\right)
\end{equation}
where $C_v \in \gl_{g_v}(\Zp)$ and $I_n$ is the identity matrix of size
$n$.

Let $\Dcrisv(T^*)$ be the Dieudonn\'e module associated to $T^*$. There
is a natural pairing 
\begin{equation} \label{eq:pairingDieudonne}
	\Dcrisv(T) \times \Dcrisv(T^*) \rightarrow \Dcrisv(\Zp(1))
	\simeq \Zp,
\end{equation}
with respect to which $\fil^i \Dcrisv(T^*)$ is the orthogonal complement
of $\fil^{-i}\Dcrisv(T)$ and $\phi^{-1}$ is the dual of $p\phi$.
In particular, $\Dcrisv(T^*)$ also satisfies the hypotheses
\textbf{(Fil.)} and \textbf{(Slopes)}.

\begin{exem} \label{exem:abelianvariety}
	Let $A$ be an abelian variety defined over $F$ with good
	supersingular reduction at every prime dividing $p$.
	Let $T_p(A)=\varprojlim_n A[p^n]$ be the $p$-adic Tate module of $A$ and let $V_p(A)=T_p(A)\otimes_{\Zp}\Qp$.
	Then $V_p(A)$ is a $G_F$-representation and $T_p(A)$ a $G_F$-stable $\Zp$-lattice of $V_p(A)$ which satisfy all the hypotheses
	\textbf{(Crys.)}, \textbf{(H.-T.)}, \textbf{(Tors.)},
	\textbf{(Fil.)} and \textbf{(Slopes)}.
\end{exem}

\subsection{Decomposition of Perrin-Riou's big logarithm map} \label{subsec:PRLog}
Let $v$ be a prime of $F$ dividing $p$. For $i \geqslant 0$, the
projective limit of the Galois cohomology groups $\coh^i(F_v(\mu_{p^n}),T)$
relative to the corestriction maps is denoted by
$\coh_{\mathrm{Iw}}^i(F_v,T)$. Recall that $\hiw(F_v,T)$ is a
$\Lambda$-module of rank $g_v$ \cite[Proposition A.2.3 ii)]{PRLivre}.

We set $\HH = \Qp[\Delta] \otimes_{\Qp}\HH(\Gamma)$ where $\HH(\Gamma)$
is the set of elements $f(\gamma - 1)$ with $\gamma \in \Gamma$ and
$f(X) \in \Qp[[X]]$ is convergent on the $p$-adic open unit disk.
Perrin-Riou's big logarithm map is a $\Lambda$-homomorphism \cite{PR94}
\[
	\LL_{T,v} : \hiw(F_v,T) \rightarrow \HH \otimes_{\Zp} \Dcrisv(T)
\]
which interpolates Kato's dual exponential maps \cite[II \S 1.2]{Kato91}
\[
	\exp_n^* : \coh^1(F_v(\mu_{p^n}),T) \rightarrow F_v(\mu_{p^n})
	\otimes_{\Zp} \Dcrisv(T).
\]

As in \cite{BLIntegral}, we may define for $n \geqslant 1$,
\begin{equation}
	C_{v,n} = \left( \begin{array}{c|c}
		I_{d_v} & 0 \\
		\hline
		0 & \Phi_{p^n}(1+X) I_{g_v - d_v}
	\end{array}\right) C_v^{-1} \quad \text{and} \quad
	M_{v,n} = (C_{\phi,v})^{n+1} C_{v,n} \cdots C_1,
\end{equation}
where $\Phi_{p^n}$ is the $p^n$-th cyclotomic polynomial. By
Proposition 2.5 in \emph{op. cit.}, the sequence $(M_{v,n})_{n \geqslant
1}$ converges to some $g_v \times g_v$ logarithmic matrix over $\HH$,
which we denote by $M_v$. This allows to decompose $\LL_{T,v}$ into
\begin{equation} \label{eq:decompoRegulator}
	\LL_{T,v} = (u_1, \ldots, u_{g_v}) \cdot M_v \cdot
	\begin{pmatrix}
		\col_{T,v,1}\\
		\vdots\\
		\col_{T,v,g_v}
	\end{pmatrix}
\end{equation}
where $\col_{T,v,i}, i \in \{1, \ldots,g_v\}$ are
$\Lambda$-homomorphisms from $\hiw(F_v,T)$ to $\Lambda$.
More details on the decomposition~\eqref{eq:decompoRegulator} are
given in paragraph \ref{subsec:congruencesColeman}.

\subsection{Signed Coleman maps}
Let $I_v$ be a subset of $\{1,\ldots,g_v\}$. We set 
\[
	\col_{T,I_v} : \hiw(F_v,T) \rightarrow \bigoplus_{i = 1}^{|I_v|}
	\Lambda, \quad \mathbf{z} \mapsto (\col_{T,v,i}(\mathbf{z}))_{i
	\in I_v}.
\]
These maps are called \emph{signed Coleman maps}. We recall results
about them that we shall need.
\begin{lemm}[{{\cite[Proposition 2.20, Lemma 3.22]{BLIntegral}}}] \label{lemm:BL}
	\begin{enumerate}
		\item For any character $\eta$ on $\Delta$,  the $\eta$-isotypic component of the
			image of the signed Coleman map $\im
	\col_{T,I_v}^\eta$ is a $\Zp[[\Gamma]]$-module of rank
	$|I_v|$ contained in a free $\Zp[[\Gamma]]$-module with finite index.
		\item The $\Lambda$-module $\ker \col_{T,I_v}$ is free of rank $g_v -
	|I_v|$.
	\end{enumerate}
\end{lemm}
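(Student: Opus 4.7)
The plan is to deduce both parts from the decomposition~\eqref{eq:decompoRegulator} combined with the structural properties of $\hiw(F_v,T)$ under hypothesis \textbf{(Tors.)}. Recall that under \textbf{(Tors.)} the Iwasawa cohomology $\hiw(F_v,T)$ is torsion-free of rank $g_v$ over $\Lambda$; isotypic-component by isotypic-component, it is in fact free over $\Zp[[\Gamma]]$, because it has no non-trivial pseudo-null submodule and $\Zp[[\Gamma]]\simeq\Zp[[X]]$ is a two-dimensional regular local ring, in which finitely generated reflexive modules are free.

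For (1), fix a character $\eta$ of $\Delta$ and restrict everything to the $\eta$-isotypic component. Perrin-Riou's big logarithm $\LL_{T,v}^\eta$ is injective modulo torsion and has rank $g_v$; since $M_v$ is invertible over the fraction field of $\HH(\Gamma)$, the vector $(\col_{T,v,1},\ldots,\col_{T,v,g_v})^\eta$ also has rank $g_v$. Projecting onto the $|I_v|$ coordinates indexed by $I_v$ then yields a map of rank $|I_v|$ whose image lies \emph{a priori} only in $\HH(\Gamma)^{|I_v|}$. The integrality statement that the image actually lies in $\Zp[[\Gamma]]^{|I_v|}$ is the technical heart: one would prove it by reading $M_v^{-1}\LL_{T,v}$ off the Wach module of $T$ and approximating $M_v$ by the finite products $M_{v,n}$, exploiting the explicit Frobenius shape~\eqref{eq:matriceFrobenius}. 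The ``finite index'' assertion then amounts to non-vanishing of a $|I_v|\times|I_v|$ determinant, which can be checked after evaluation at finite levels: the interpolation property of $\LL_{T,v}$ reduces this to Kato's dual exponential $\exp_n^*$, which under \textbf{(H.-T.)} and \textbf{(Slopes)} hits a full $\Zp$-lattice of the expected rank, precluding degeneration.

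For (2), consider the short exact sequence
\begin{equation*}
0 \longrightarrow \ker \col_{T,I_v} \longrightarrow \hiw(F_v,T) \longrightarrow \im \col_{T,I_v} \longrightarrow 0
\end{equation*}
and, as above, work one isotypic component at a time. By (1), $\im \col_{T,I_v}^\eta$ embeds in the free module $\Zp[[\Gamma]]^{|I_v|}$, hence is torsion-free. Over the regular local ring $\Zp[[\Gamma]]$ of Krull dimension $2$, a finitely generated torsion-free module has depth at least $1$, so the Auslander--Buchsbaum formula gives it projective dimension at most $1$. Since the middle term $\hiw(F_v,T)^\eta$ is free, the long exact sequence of $\mathrm{Ext}$'s forces $\ker \col_{T,I_v}^\eta$ to be projective, and a finitely generated projective module over the local ring $\Zp[[\Gamma]]$ is free. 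Reassembling the isotypic components yields freeness over $\Lambda=\Zp[\Delta][[\Gamma]]$, and additivity of ranks along the exact sequence gives the claimed rank $g_v-|I_v|$.

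The main obstacle is the integrality in (1): without it, the ``finite index'' claim is meaningless and the kernel in (2) cannot even be framed over $\Lambda$. Once integrality is granted, both assertions become formal consequences of commutative algebra over the two-dimensional regular local ring $\Zp[[\Gamma]]$ together with additivity of $\Lambda$-ranks.
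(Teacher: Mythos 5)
This lemma is not proved in the paper at all: it is quoted directly from \cite{BLIntegral} (Proposition 2.20 and Lemma 3.22), so your proposal is really an attempt to reprove results of that reference, whose actual route is through the Wach module, namely the identification $\hiw(F_v,T)\simeq \N_v(T)^{\psi=1}$ of \eqref{eq:WachIwasawa} and the fact that $(\phi^*\N_v(T))^{\psi=0}$ is a free $\Lambda$-module of rank $g_v$ into which $1-\phi$ embeds $\N_v(T)^{\psi=1}$.

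Measured against that, your sketch has a genuine gap at the point on which all of part (2) rests: the freeness of $\hiw(F_v,T)^\eta$ over $\Zp[[\Gamma]]$. You justify it by saying the module is torsion-free, has no non-trivial pseudo-null submodule, and that finitely generated reflexive modules over the two-dimensional regular local ring $\Zp[[\Gamma]]$ are free. But ``torsion-free with no finite submodule'' is strictly weaker than ``reflexive'': the maximal ideal $(p,X)\subset\Zp[[X]]$ is torsion-free, contains no non-zero finite submodule, and is not free. So the implication you invoke is false, and nothing in your text replaces it. The freeness is true under the running hypotheses, but it requires genuine input: either Perrin-Riou's structure theorem for local Iwasawa cohomology, which expresses $\hiw(F_v,T)$ as a free module plus $\coh^0(F_v(\mu_{p^\infty}),T)$ --- and then one must check that this $\coh^0$ vanishes, which uses \textbf{(Slopes)} (or crystallinity), not merely \textbf{(Tors.)} --- or the Wach-module description used in \cite{BLIntegral}. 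Once freeness of the middle term is granted, your Auslander--Buchsbaum/Ext argument for part (2) is correct, as is the rank count. In part (1) the situation is similar but milder: the integrality you call the ``technical heart'' is in fact built into the construction (the maps $\col_{T,v,i}$ in \eqref{eq:decompoRegulator} are $\Lambda$-valued by definition of the decomposition), whereas the finite-index assertion does not follow from ``non-vanishing of a determinant''; what it needs, after the rank-$|I_v|$ claim, is the observation that a finitely generated torsion-free $\Zp[[\Gamma]]$-module of rank $|I_v|$ sits inside its reflexive hull, which is free of rank $|I_v|$, with pseudo-null, hence finite, quotient. You have all these tools on the page but never assemble them into that argument, so as written both the freeness step and the finite-index step are unsupported.
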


Let 
\[
	\langle \cdot, \cdot \rangle_n : \coh^1(F_v(\mu_{p^n}),T) \times
	\coh^1(F_v(\mu_{p^n}),T^*) \rightarrow
	\coh^2(F_v(\mu_{p^n}),\Zp(1)) \simeq \Zp
\]
be Tate's local pairing. If $x = (x_n)_n$ and $y = (y_n)_n$ are elements
of $\hiw(F_v,T)$ and $\hiw(F_v,T^*)$ then the elements
\[
	\sum_{\sigma \in \gal(F_v(\mu_{p^n})/F_v)}\langle x_n,
	\sigma(y_n)
	\rangle \sigma \in \Zp[\gal(F_v(\mu_{p^n})/F_v)]
\]
are compatible under the natural projection maps
\[
\Zp[\gal(F_v(\mu_{p^{n+1}})/F_v)] \rightarrow
\Zp[\gal(F_v(\mu_{p^n})/F_v)],
\]
thus, they define an element in
$\Lambda$.
This defines Perrin-Riou's pairing 
\begin{equation} \label{eq:PRpairing}
	\hiw(F_v,T) \times \hiw(F_v,T^*) \rightarrow \Lambda.
\end{equation}

Since all our hypotheses \textbf{(Crys.)}, \textbf{(H.-T.)},
\textbf{(Tors.)}, \textbf{(Fil.)} and \textbf{(Slopes)} are satisfied by
$\MM^*$ and $T^*$, we carry out all of the constructions of
paragraph~\ref{subsec:PRLog} for $T^*$ with respect to the dual basis of
our fixed basis
$\{u_1,\ldots,u_{g_v}\}$ for the pairing~\eqref{eq:pairingDieudonne} and similarly define
signed Coleman maps for $T^*$.

Then, we have the following relation.
\begin{lemm}[{{\cite[Lemma 3.2]{LPFunctionalEq}}}]
	\label{lemm:KerOrthoComplement}
	Let $I_v$ be a subset of $\{1,\ldots,g_v\}$ and $I_v^c$ its
	complement. Then $\ker \col_{T,I_v}$ is the orthogonal
	complement of $\ker \col_{T^*,I_v^c}$ relative to Perrin-Riou's
	pairing~\eqref{eq:PRpairing}.
\end{lemm}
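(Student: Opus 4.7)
The plan is to identify Perrin-Riou's pairing~\eqref{eq:PRpairing} with a simple bilinear expression in the signed Coleman images of $\mathbf{x}$ and $\mathbf{y}$, from which the orthogonality statement can be read off directly. The key input is Perrin-Riou's explicit reciprocity law, which asserts that $\LL_{T,v}$ and $\LL_{T^*,v}$ are adjoint with respect to the $\HH$-linear extension of the Dieudonn\'e pairing~\eqref{eq:pairingDieudonne} and Perrin-Riou's pairing~\eqref{eq:PRpairing}, up to an explicit factor $\ell \in \HH$ (essentially $\log(1+X)/X$) and the involution $\iota$ on $\Lambda$ induced by $\gamma \mapsto \gamma^{-1}$.

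First, I would record this adjunction in the form
\[
	\bigl\langle \LL_{T,v}(\mathbf{x}),\, \LL_{T^*,v}(\mathbf{y}) \bigr\rangle_{\mathrm{D}} = \ell \cdot \iota\bigl\langle \mathbf{x}, \mathbf{y} \bigr\rangle_{\mathrm{PR}},
\]
where $\langle \cdot,\cdot \rangle_{\mathrm{D}}$ is induced by~\eqref{eq:pairingDieudonne}. Using the decomposition~\eqref{eq:decompoRegulator} for both $T$ and $T^*$, the left-hand side rewrites as $\sum_{i,j} \col_{T,v,i}(\mathbf{x}) \cdot (M_v^{\mathrm{t}} B M_v^{*})_{ij} \cdot \col_{T^*,v,j}(\mathbf{y})$, where $B$ is the matrix of~\eqref{eq:pairingDieudonne} in the chosen bases. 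By construction, the basis of $\Dcrisv(T^*)$ is taken dual to $\{u_1,\ldots,u_{g_v}\}$, so $B$ is the identity; and since the Frobenius on $\Dcrisv(T^*)$ is dual to $p\phi$ on $\Dcrisv(T)$, the inductive construction of $M_v$ and $M_v^{*}$ from~\eqref{eq:matriceFrobenius} forces the identity $M_v^{\mathrm{t}} M_v^{*} = \ell \cdot I_{g_v}$ (possibly after applying $\iota$ componentwise). Combining the two expressions yields the explicit formula
\[
	\bigl\langle \mathbf{x}, \mathbf{y} \bigr\rangle_{\mathrm{PR}} = \sum_{i=1}^{g_v} \col_{T,v,i}(\mathbf{x}) \cdot \iota\,\col_{T^*,v,i}(\mathbf{y}).
\]

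From this pairing formula both inclusions follow readily. The containment $\ker \col_{T,I_v} \subseteq (\ker \col_{T^*,I_v^c})^{\perp}$ is immediate, since in every summand either $i \in I_v$ (making the first factor zero) or $i \in I_v^c$ (making the second factor zero). For the reverse inclusion, Lemma~\ref{lemm:BL}(2) gives that $\ker \col_{T,I_v}$ is a free $\Lambda$-module of rank $g_v - |I_v| = |I_v^c|$, while the analogous statement for $T^*$ combined with the non-degeneracy of Perrin-Riou's pairing modulo pseudo-null shows that $(\ker \col_{T^*,I_v^c})^{\perp}$ is a saturated $\Lambda$-submodule of $\hiw(F_v,T)$ of the same rank $|I_v^c|$; a rank-and-saturation comparison then forces equality. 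The main obstacle is the matrix identity $M_v^{\mathrm{t}} M_v^{*} = \ell \cdot I_{g_v}$: one has to propagate the Dieudonn\'e duality between $\Dcrisv(T)$ and $\Dcrisv(T^*)$ through the limit defining the logarithmic matrices $M_v$ and $M_v^{*}$, keeping careful track of the factor $\ell$ arising from the cyclotomic polynomials $\Phi_{p^n}(1+X)$. Once this compatibility is established, the rest of the argument is purely formal.
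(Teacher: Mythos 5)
You should first note that the paper does not actually prove this lemma: it quotes \cite[Lemma 3.2]{LPFunctionalEq} and merely remarks that the proof there carries over verbatim, so the comparison is with that cited argument, whose general spirit (Perrin-Riou's explicit reciprocity law combined with the logarithmic matrices) your plan does follow. The problem is that the entire content of the lemma is concentrated in the step you assert rather than prove, namely that $B$ is the identity and $M_v^{\mathrm{t}}M_v^{*}=\ell\cdot I_{g_v}$; these two claims cannot hold simultaneously, and the second is false as stated. To define $M_v^{*}$ by the recipe \eqref{eq:matriceFrobenius} you need a Hodge-compatible basis of $\Dcrisv(T^*)$, and the dual basis taken in the original order is \emph{not} Hodge-compatible, since $\fil^0\Dcrisv(T^*)=(\fil^0\Dcrisv(T))^{\perp}$ is spanned by the duals of $u_{d_v+1},\dots,u_{g_v}$. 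After the necessary reordering, the Gram matrix $B$ of \eqref{eq:pairingDieudonne} in the two bases is a block permutation matrix, not the identity. Concretely, for a supersingular elliptic curve over $\Qp$ with $a_p=0$ one checks that every $M_{v,n}$ and $M_{v,n}^{*}$ is anti-diagonal (the entries are built from the $\Phi_{p^k}$ of even, respectively odd, index), so in the limit $M_v^{\mathrm{t}}M_v^{*}$ is diagonal with two \emph{distinct} entries, essentially $(\log^{+})^2$ and $(\log^{-})^2$, while $M_v^{\mathrm{t}}BM_v^{*}$ is $\ell$ times the swap matrix: under no reading is it $\ell\cdot I_{g_v}$. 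The identity that is actually true has the shape $M_v^{\mathrm{t}}\,B\,(M_v^{*})^{\iota}=\ell\cdot U$ with $U$ a permutation-type unit matrix, and it is precisely this $U$ --- the bookkeeping of which dual index corresponds to which $u_i$ --- that encodes the $I_v$ versus $I_v^c$ duality of the statement; if it is waved away, the same ``immediate'' computation would just as readily yield the wrong assertion that $\ker\col_{T,I_v}$ annihilates $\ker\col_{T^*,I_v}$. So this matrix identity must genuinely be proved (for instance by induction on $n$ from $C_{\phi,v}^{\mathrm{t}}BC_{\phi^*,v}=p^{-1}B$ and the explicit shapes of $C_{v,n}$ and $C_{v,n}^{*}$), with the involution $\iota$ and the exact factor in Perrin-Riou's reciprocity law pinned down, before your first inclusion is available.

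Separately, the closing ``rank-and-saturation comparison'' does not give the reverse inclusion as stated: knowing that $(\ker\col_{T^*,I_v^c})^{\perp}$ is saturated and has the same rank as the submodule $\ker\col_{T,I_v}$ it contains forces nothing ($p\Lambda\subset\Lambda$ has the same rank and $\Lambda$ is saturated in itself). What does work is saturation of the \emph{smaller} module: $\hiw(F_v,T)/\ker\col_{T,I_v}$ injects into $\bigoplus_{i\in I_v}\Lambda$, hence is torsion-free on each isotypic component, so once the orthogonal complement is known to have rank $|I_v^c|$ --- which itself requires a non-degeneracy statement for \eqref{eq:PRpairing}, e.g.\ via freeness of $\hiw(F_v,T)$ granted \textbf{(Tors.)} --- the quotient is simultaneously torsion and embedded in a torsion-free module, hence zero. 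Alternatively, with the corrected explicit pairing formula in hand, the reverse inclusion follows by pairing against elements of $\ker\col_{T^*,I_v^c}$ and invoking Lemma~\ref{lemm:BL} to see that their Coleman images in the relevant components have full rank. As written, both halves of the argument rest on steps that are either unproved or incorrect, even though the overall strategy is the right one.
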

\begin{rema}
	In \cite{LPFunctionalEq}, there is an additional hypothesis that
	$g_+ = g_-$ and $F$ is abelian over $\Q$ with degree prime to
	$p$. However, the proof of Lemma 3.2 in \textit{op. cit.}
	applies in the setting considered in the present article in
	verbatim.
\end{rema}

\subsection{Signed Selmer groups} \label{subsec:selmer}
Let $\underline{I} = (I_v)_{v \mid p}$ be a tuple of sets indexed by the
primes $v$ of $F$ dividing $p$ and where each $I_v$ is a subset of $\{1,
\ldots,g_v\}$.

Tate's local pairing 
\[
	\coh^1(F_v(\mu_{p^n}),M^*) \times \coh^1(F_v(\mu_{p^n}),T)
	\rightarrow \coh^2(F_v(\mu_{p^n}),\mu_{p^\infty})\simeq \Qp/\Zp
\]
passes to the limit relative to restriction and corestriction and
defines a pairing
\begin{equation} \label{eq:cohompairing}
	\coh^1(F_v(\mu_{p^\infty}),M^*) \times \hiw(F_v,T) \rightarrow
	\Qp/\Zp.
\end{equation}
\begin{defi}
We define $\coh^1_{I_v}(F_v(\mu_{p^\infty}),M^*) \subseteq
\coh^1(F_v(\mu_{p^\infty}),M^*)$ as the orthogonal complement of $\ker
\col_{T,I_v}$ under the pairing~\eqref{eq:cohompairing}.
\end{defi}

The assumption $\coh^2(F_v,T/pT) = 0$ \textbf{(Tors.)} implies by Tate's
duality that $\coh^0(F_v,M^*)=0$, thus $\coh^0(F_{v,\infty},M^*) = 0$
since $\gal(F_{v,\infty}/F_v) \simeq \Zp$ is a pro-$p$-group. In
particular, by the inflation-restriction exact sequence, we have 
\[
	\coh^1(F_{v,\infty},M^*) \simeq
	\coh^1(F_v(\mu_{p^\infty}),M^*)^\Delta
\]
since the order of $\Delta$ is $p-1$ and
$\coh^0(F_v(\mu_{p^\infty}),M^*)$ is finite of order a power of $p$, and for
$n \geqslant 0$, we have
\[
	\coh^1(F_{v,n},M^*) \simeq \coh^1(F_{v,\infty},M^*)^{\Gamma_n}.
\]
We set
\[
	\coh^1_{I_v}(F_{v,\infty},M^*) =
	\coh^1_{I_v}(F_v(\mu_{p^\infty}),M^*)^\Delta \quad \text{and}
	\quad \coh^1_{I_v}(F_{v,n},M^*) =
	\coh^1_{I_v}(F_{v,\infty},M^*)^{\Gamma_n}.
\]

We also have signed Coleman maps for $T^*$. For $n \geqslant 0$, let
$(\ker \col_{T^*,I_v^c})_n$ be the image of $\ker \col_{T^*,I_v^c}$
under the natural map $\hiw(F_v,T^*) \rightarrow \coh^1(F_{v,n},T^*)$.
Again by \textbf{(Tors.)}, we have the exact sequence
\begin{equation} \label{eq:tors}
	0 \rightarrow \coh^1(F_{v,n},T^*) \xrightarrow{i_n}
	\coh^1(F_{v,n},\MM_p^*) \xrightarrow{\pi_n} \coh^1(F_{v,n},M^*)\rightarrow 0.
\end{equation}
The image of $(\ker \col_{T^*,I_v^c})_n$ 
under $i_n$ generates a $\Qp$-vector space in $\coh^1(F_{v,n},\MM_p^*)$, 
and we denote by $\overline{(\ker \col_{T^*,I_v^c})_n}$
the image of this $\Qp$-vector space in $\coh^1(F_{v,n},M^*)$ under $\pi_n$.
\begin{lemm} \label{lemm:divisible}
	For any $n \geqslant 0$, $\overline{(\ker \col_{T^*,I_v^c})_n}
	$ is the orthogonal complement of $(\ker
	\col_{T,I_v})_n$ under Tate's local pairing
\[
	\coh^1(F_{v,n},M^*) \times \coh^1(F_{v,n},T)
	\rightarrow \coh^2(F_{v,n},\mu_{p^\infty})\simeq \Qp/\Zp.
\]
	Moreover, $\overline{(\ker \col_{T^*,I_v^c})_n}
	= \coh^1_{I_v}(F_{v,n},M^*)$.
	In particular, $\coh^1_{I_v}(F_{v,n},M^*)$ is a divisible group.
\end{lemm}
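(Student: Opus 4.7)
The plan is to deduce the three assertions from Lemma \ref{lemm:KerOrthoComplement}, the exact sequence \eqref{eq:tors}, the compatibility of the various Tate pairings, and a $\Zp$-corank count.

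First, I would prove the inclusion $\overline{(\ker\col_{T^*,I_v^c})_n} \subseteq (\ker\col_{T,I_v})_n^{\perp}$. Perrin-Riou's pairing \eqref{eq:PRpairing} is by construction the projective limit of the $\Zp[\gal(F_v(\mu_{p^n})/F_v)]$-valued twisted Tate pairings, so Lemma \ref{lemm:KerOrthoComplement} implies that at every finite level the $\Zp$-valued Tate pairing $\coh^1(F_{v,n},T)\times\coh^1(F_{v,n},T^*)\to\Zp$ annihilates $(\ker\col_{T,I_v})_n\times (\ker\col_{T^*,I_v^c})_n$. Combined with the standard compatibility $\langle x,\pi_n(i_n(y'))\rangle \equiv \langle x,y'\rangle\pmod{\Zp}$ extended $\Qp$-bilinearly to the whole $\Qp$-subspace of $\coh^1(F_{v,n},\MM_p^*)$ generated by $i_n\big((\ker\col_{T^*,I_v^c})_n\big)$, this yields the claimed inclusion.

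For the reverse inclusion I would match $\Zp$-coranks. On the right-hand side, Tate duality identifies $(\ker\col_{T,I_v})_n^{\perp}$ with the Pontryagin dual of $\coh^1(F_{v,n},T)/(\ker\col_{T,I_v})_n$, whose $\Zp$-rank equals that of the level-$n$ image of $\col_{T,I_v}$; by Lemma \ref{lemm:BL}(1) this is $|I_v|\cdot[F_{v,n}:F_v]$. On the left-hand side, $\pi_n$ maps the $\Qp$-span of $i_n\big((\ker\col_{T^*,I_v^c})_n\big)$ onto $\overline{(\ker\col_{T^*,I_v^c})_n}$ with kernel a $\Zp$-lattice of full rank, so the latter has $\Zp$-corank equal to the $\Zp$-rank of $(\ker\col_{T^*,I_v^c})_n$, which by Lemma \ref{lemm:BL}(2) applied to $T^*$ is $(g_v-|I_v^c|)\cdot[F_{v,n}:F_v]=|I_v|\cdot[F_{v,n}:F_v]$. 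Equality of coranks combined with the previous inclusion gives the desired equality. The identification with $\coh^1_{I_v}(F_{v,n},M^*)$ then follows from Galois-equivariance of the pairing \eqref{eq:cohompairing}, which shows that $\coh^1_{I_v}(F_v(\mu_{p^\infty}),M^*)^{\Delta\times\Gamma_n}$ is also the orthogonal complement of $(\ker\col_{T,I_v})_n$ inside $\coh^1(F_{v,n},M^*)$.

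Divisibility is then immediate: $\overline{(\ker\col_{T^*,I_v^c})_n}$ is by construction the $\pi_n$-image of a $\Qp$-vector subspace of $\coh^1(F_{v,n},\MM_p^*)$, hence a divisible $\Zp$-submodule of $\coh^1(F_{v,n},M^*)$. The main obstacle I expect is the careful bookkeeping of the three pairings in play (Perrin-Riou's $\Lambda$-valued one, the finite-level $\Zp$-valued Tate pairing with $T^*$-coefficients, and the $\Qp/\Zp$-valued one with $M^*$-coefficients) as they interact through $i_n$ and $\pi_n$, together with the derivation of the finite-level $\Zp$-ranks of the images of $\ker\col_{T,I_v}$ and $\ker\col_{T^*,I_v^c}$ from their $\Lambda$-freeness via suitable control-type statements.
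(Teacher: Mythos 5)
Your first inclusion and the final identification with $\coh^1_{I_v}(F_{v,n},M^*)$ follow essentially the paper's route (Lemma~\ref{lemm:KerOrthoComplement} plus bilinearity, then descent using \textbf{(Tors.)}), and the corank bookkeeping, modulo the control statements you defer, is plausible. The genuine gap is the step ``Equality of coranks combined with the previous inclusion gives the desired equality.'' For cofinitely generated $\Zp$-modules, an inclusion with equal coranks only yields finite index, not equality (compare $(\Qp/\Zp)^r\subset(\Qp/\Zp)^r\oplus\Z/p\Z$). Concretely, under the perfect pairing the orthogonal complement of $(\ker\col_{T,I_v})_n$ is the Pontryagin dual of $\coh^1(F_{v,n},T)/(\ker\col_{T,I_v})_n$; any $p$-torsion in this quotient produces a finite, non-divisible piece of the orthogonal complement that no corank count can detect. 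You cannot discharge this by your divisibility remark, which concerns only the left-hand group $\overline{(\ker \col_{T^*,I_v^c})_n}$: divisibility of the orthogonal complement — equivalently, $p$-saturation of $(\ker\col_{T,I_v})_n$ in $\coh^1(F_{v,n},T)$, equivalently torsion-freeness of the descended image of $\col_{T,I_v}$ — is precisely (part of) what the lemma asserts, so assuming it is circular. Note also that Lemma~\ref{lemm:BL}(1) only places $\im\col_{T,I_v}^\eta$ inside a free $\Zp[[\Gamma]]$-module with finite index, and such modules (e.g.\ $(p,X)\subset\Zp[[X]]$) can have torsion coinvariants, so the structural input you invoke does not rule out the finite discrepancy.

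This is exactly the point the paper treats differently: the reverse inclusion is deduced from the exactness of the sequence~\eqref{eq:tors}, i.e.\ by an argument at the integral level — lifting a class orthogonal to $(\ker\col_{T,I_v})_n$ through the surjection $\pi_n$ to $\coh^1(F_{v,n},\MM_p^*)$ and adjusting by classes coming from $i_n(\coh^1(F_{v,n},T^*))$ so as to land in the $\Qp$-span of $i_n\bigl((\ker\col_{T^*,I_v^c})_n\bigr)$ — rather than by comparing coranks. To repair your proof you must either supply such an integral lifting step or prove directly that $\coh^1(F_{v,n},T)/(\ker\col_{T,I_v})_n$ is $\Zp$-torsion-free; the corank comparison alone cannot close the argument.
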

\begin{proof}
	By Lemma~\ref{lemm:KerOrthoComplement} and bilinearity of Tate's
	pairing, the orthogonal
	complement of $(\ker \col_{T,I_v})_n$ under Tate's pairing
	contains $\overline{(\ker \col_{T^*,I_v^c})_n}$. The reverse
	inclusion follows from the exactness of the sequence~\eqref{eq:tors}.
	As already remarked, by \textbf{(Tors.)}, one has
	$\coh^1(F_{v,n},M^*) =
	\coh^1(F_v(\mu_{p^\infty}),M^*)^{\Gamma_n\times \Delta}$ and by
	duality $\hiw(F_v,T)_{\Gamma_n\times \Delta} =
	\coh^1(F_{v,n},T)$. It follows that $\overline{(\ker
	\col_{T^*,I_v^c})_n} = \coh^1_{I_v}(F_{v,n},M^*)$.
\end{proof}

Let $w$ be a prime of $F$ not dividing $p$ and let $K$ be a finite extension of $F_w$.
Define
\[
	\hur(K,\MM^*_p) = \ker (\coh^1(K,\MM^*_p) \rightarrow
	\coh^1(K_\mathrm{unr},\MM^*_p))
\]
where $K_\mathrm{unr}$ the maximal unramified extension of
$K$.
Let $\hur(K,M^*)$ be the image of $\hur(K,\MM^*_p)$ under the natural map
\[
	\coh^1(K,\MM^*_p) \rightarrow
	\coh^1(K,M^*)
\]
and $\hur(K,T^*)$ the inverse image of
$\hur(K,\MM^*_p)$ under
\[
	\coh^1(K,T^*) \rightarrow
	\coh^1(K,\MM_p^*).
\]
We remark that $\hur(K,M^*)$ is divisible by definition and recall
that it is the orthogonal complement of $\hur(K,T)$ under
Tate's local pairing (see \cite[Proposition 3.8]{BK}).
If $K^\prime$ is an infinite algebraic extension of $F_w$, we define the subgroup
\[
	\hur(K^\prime,M^*)=\varinjlim_{K} \hur(K,M^*) \subset \coh^1(K^\prime,M^*)
\]
where the limit runs through the finite extensions $K$ of $F_w$ contained in $K^\prime$ and is taken with respect to the restriction maps.

Let $F^\prime$ be one of $F(\mu_{p^\infty})$, $F_\infty$ or $F_n$ for some $n\geqslant 0$.
We set
\[
	\PP_{\Sigma,\underline{I}}(M^*/F^\prime) = \prod_{w \in \Sigma,
	w \nmid p} \frac{\coh^1(F_w^\prime,M^*)}{\hur(F_w^\prime,M^*)}
	\times \prod_{w \mid p}
	\frac{\coh^1(F_w^\prime,M^*)}{\coh^1_{I_v}(F_w^\prime,M^*)}.
\]

\begin{defi}
	Let $F^\prime$ be $F(\mu_{p^\infty})$, $F_\infty$, or $F_n$ for
	some $n \geqslant 0$. The $\underline{I}$-Selmer group of $M^*$
	over $F^\prime$ is defined by 
	\[
		\sel_{\underline{I}}(M^*/F^\prime) = \ker
		(\coh^1(F_\Sigma/F^\prime,M^*) \rightarrow
		\PP_{\Sigma,\underline{I}}(M^*/F^\prime))
	\]
	where the map is the composition of localization at each $w\in
	\Sigma$ followed by the projection in the appropriate quotient.
\end{defi}

Let $\II_p$ be the set of tuples $\underline{I} = (I_v)_{v \mid p}$ indexed by the
primes $v$ of $F$ dividing $p$ and where each $I_v$ is a subset of
$\{1,\ldots,g_v\}$ and such that $\sum_{v \mid p} |I_v| = g_-$.
From observations about the expected $\Lambda$-corank of the Selmer group of a supersingular abelian variety, B\"uy\"ukboduk and Lei have made the following conjecture \cite[Remark 3.27]{BLIntegral}.
\begin{conj} \label{conj:cotorsion}
	For any $\underline{I} \in \II_p$ and any even Dirichlet
	character $\eta$ on $\Delta$, 
	$\sel_{\underline{I}}(M^*/F(\mu_{p^\infty}))^\eta$ is a cotorsion
	$\Zp[[\Gamma]]$-module (\textit{i.e.} its Pontryagin dual is a
	torsion $\Zp[[\Gamma]]$-module).
\end{conj}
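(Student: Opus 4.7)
The plan is to adapt the Euler system approach that Kobayashi used to establish the analogous cotorsion property for the plus/minus Selmer groups of supersingular elliptic curves. The three ingredients are: (i) a $\Lambda$-module presentation of the dual Selmer group in terms of Iwasawa cohomology and the signed Coleman maps; (ii) an Euler system whose image under those Coleman maps recovers signed $p$-adic $L$-functions; and (iii) a non-vanishing statement for those $p$-adic $L$-functions.

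First I would use Poitou--Tate duality together with Lemma~\ref{lemm:divisible} to exhibit the Pontryagin dual of $\sel_{\underline{I}}(M^*/F(\mu_{p^\infty}))$ as the cokernel, up to a controlled error, of a map from the global Iwasawa cohomology $\hiw(F_\Sigma/F,T)$ into a direct sum of local terms: at each $v \mid p$ the contribution is $\im \col_{T,I_v}$, while at $w \in \Sigma$ with $w \nmid p$ it is $\hiw(F_w,T)$ modulo its unramified submodule. Here one uses crucially that $\coh^1_{I_v}$ is the orthogonal complement of $\ker \col_{T,I_v}$. Projecting to the $\eta$-isotypic component for the even character $\eta$ produces a finitely generated $\Zp[[\Gamma]]$-presentation of the dual Selmer group to which the structure theory recalled in \S\ref{subsec:cyclo} applies.

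Second, assuming the existence of a nontrivial Euler system for $T$, a Kolyvagin derivative construction in the form developed by Rubin would bound the $\eta$-component of the dual Selmer group, up to a pseudo-null module, by the $\Zp[[\Gamma]]$-ideal generated by the entries $\col_{T,v,i}(\mathrm{loc}_v(\mathbf{z}))^\eta$ for $v \mid p$ and $i \in I_v$, where $\mathbf{z} \in \hiw(F,T)$ is the bottom-layer Euler-system class. Via the decomposition~\eqref{eq:decompoRegulator} of Perrin-Riou's big logarithm, these entries are precisely the signed $p$-adic $L$-functions attached to $(T,\underline{I})$.

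The main obstacle is the final step: showing that at least one of these signed $p$-adic $L$-functions is nonzero in $\Zp[[\Gamma]]$ on the $\eta$-component, which would then force the bounding ideal, and hence the characteristic ideal of the dual Selmer group, to be nontrivial. In Kobayashi's setting this reduces, through the interpolation formula for $\LL_{T,v}$ and the factorization of the Mazur--Swinnerton-Dyer $p$-adic $L$-function into $L_p^{\pm}$, to Rohrlich's theorem on non-vanishing of $L$-values at finite-order characters; for a general motive $\MM$ satisfying the hypotheses of \S\ref{subsec:motives} no such non-vanishing is presently known, which is precisely why B\"uy\"ukboduk and Lei state the cotorsion property only as a conjecture. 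A further, equally serious difficulty is that a nontrivial Euler system is not known to exist for arbitrary $T$ in this generality, and producing one is itself a substantial open problem. For this reason I would expect any realistic attack to proceed case by case (elliptic curves, modular abelian varieties, Hilbert modular forms, etc.) rather than in the full generality stated.
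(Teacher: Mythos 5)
There is no proof of this statement in the paper for you to be measured against: it is Conjecture~\ref{conj:cotorsion}, due to B\"uy\"ukboduk and Lei, and the article takes it as a standing \emph{hypothesis} in Theorem~\ref{theo:submodules}, Corollary~\ref{coro:EPchar} and Theorem~\ref{theo:congruences}; the only evidence cited is the remark that it is known in special cases (Kobayashi for $a_p=0$ supersingular elliptic curves, Sprung for $a_p\neq 0$, Lei--Loeffler--Zerbes for non-ordinary eigenforms). Your write-up correctly reconstructs the route by which those special cases were proved: a Poitou--Tate presentation of the dual signed Selmer group in terms of $\im\col_{T,I_v}$ and the local terms away from $p$ (this step is sound and is essentially the same bookkeeping the paper uses to compute coranks in Lemma~\ref{lemm:corank} and Proposition~\ref{prop:surjection}), followed by an Euler-system bound whose input, via the decomposition~\eqref{eq:decompoRegulator}, is the nonvanishing of a signed $p$-adic $L$-function on the relevant isotypic component.

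That said, what you have written is a program, not a proof, and you say so yourself: steps (ii) and (iii) each rest on an unproved input. For a general motive $\MM$ satisfying only \textbf{(H.-T.)}, \textbf{(Cryst.)}, \textbf{(Tors.)}, \textbf{(Fil.)} and \textbf{(Slopes)} there is no known nontrivial Euler system, and even granting one, the required nonvanishing of $\col_{T,v,i}(\mathrm{loc}_v(\mathbf{z}))^\eta$ (the analogue of Rohrlich's theorem used by Kobayashi) is open; an additional point you pass over quickly is that applying Rubin-style Kolyvagin machinery with the signed local conditions at $p$ requires verifying that $\ker\col_{T,I_v}$ and its dual condition satisfy the axioms of the Euler-system formalism, which in the higher-dimensional setting ($g_v>2$, several primes above $p$, arbitrary $\underline{I}\in\II_p$) is itself nontrivial. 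So the gap is genuine, but it is the gap inherent in the problem: the statement is open, the paper does not claim otherwise, and your assessment of where the difficulty lies is accurate. The only correction I would make is to the framing: this should be presented as an explanation of why the conjecture is plausible and how the known cases were settled, not as a proof attempt.
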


\begin{rema} When $F=\Q$ and $\MM$ is the Tate module of a supersingular elliptic
curve with $a_p=0$, for a good choice of basis of the Dieudonn\'e module, the signed
Selmer groups with $\underline{I} \in \II_p$ coincide with Kobayashi plus and minus Selmer groups
	\cite{Kobayashi} (see \cite[Appendix 4]{BLIntegral}).
	Conjecture~\ref{conj:cotorsion} is known in that case \textit{op. cit.}.
	Furthermore, Sprung~\cite{Sprung} as well as Lei, Loeffler and Zerbes~\cite{LLZWach} have proved that this conjecture holds in cases of $p$-supersingular elliptic curves with $a_p\neq 0$ and $p$-non-ordinary eigenforms respectively.
\end{rema}

\begin{rema}
The definition of the signed Selmer groups does not depend on the choice
of $\Sigma$.  If $\eta$ is the trivial character on $\Delta$,
then $\sel_{\underline{I}}(M^*/F(\mu_{p^\infty}))^\eta \simeq
\sel_{\underline{I}}(M^*/F_\infty)$.  It follows from the
definition that, for any $\underline{I}$, the Pontryagin dual of
$\sel_{\underline{I}}(M^*/F_\infty)$ is a finitely generated
$\Zp[[\Gamma]]$-module  since $\coh^1(F_\Sigma/F_\infty,M^*)$ is
\cite[Proposition 3]{GreenbergIwasawaRep}.  In the remainder of
this article, we study these Selmer groups.
\end{rema}

In the next section, we shall need \emph{twisted} signed Selmer groups.
Let us explain now what they are. For $s \in \Z$, we set $M^*_s = M^*
\otimes \chi^s_{| \Gamma}$ where $\chi_{|\Gamma} : \Gamma \simeq \Zp$.
As a $\gal(\overline{F}/F_\infty)$-module, $M_s^* = M^*$, thus
$\coh^1(F_\infty,M^*_s) = \coh^1(F_\infty,M^*)\otimes \chi^s_{| \Gamma}$ and for a
prime $v$ of $F$, $\coh^1(F_{v,\infty},M^*_s) =
\coh^1(F_{v,\infty},M^*)\otimes \chi^s_{| \Gamma}$ and
$\coh^0(F_{v,\infty},M^*_s)=0$. At primes dividing $p$, we set
\[
	\coh^1_{I_v}(F_{v,\infty},M^*_s) =
	\coh^1_{I_v}(F_{v,\infty},M^*) \otimes \chi^s_{| \Gamma}.
\]
Therefore, for $F^\prime$ being $F_\infty$ or $F_n$ for some $n
\geqslant 0$, we can define twisted $\underline{I}$-Selmer groups
$\sel_{\underline{I}}(M^*_s/F^\prime)$ as above with local condition at
$p$ induced by $\coh_{I_v}^1(F_{v,\infty},M^*_s)$. We remark that
$\sel_{\underline{I}}(M^*_s/F_\infty) \simeq
\sel_{\underline{I}}(M^*/F_\infty) \otimes \chi^s_{| \Gamma}$ as
$\Zp[[\Gamma]]$-modules. 

Similarly, we can define signed Selmer groups for $M$
using the signed Coleman maps $\col_{T^*,I_v}$, as well as twisted
signed Selmer groups for $M$ as above.
We remark that if $\underline{I}$ is an element of $\II_p$, then
$\underline{I}^c = (I^c_v)_{v \mid p}$ satisfies $\sum_{v \mid p} I_v^c
= g - g_- = g_+ = \dim_{\Qp} (\ind_F^\Q \MM_p^*)^-$. In particular,
Conjecture~\ref{conj:cotorsion} is expected to hold for the signed Selmer
groups of $M$.

\subsection{Bloch-Kato's Selmer groups} \label{subsec:BK}
Let $n \geqslant 0$ and $w$ be a prime of $F_n$ dividing $p$. 
We recall that Bloch and Kato~\cite{BK} defined 
the $\Qp$-subspace of $\coh^1(F_{n,w},\MM_p^*)$ 
\[
	\coh^1_f(F_{n,w},\MM_p^*) =
	\ker(\coh^1(F_{n,w},\MM_p^*) \rightarrow
	\coh^1(F_{n,w},\Bcris \otimes \MM_p^*))
\]
where $\Bcris$ is Fontaine's ring of crystalline periods~\cite{Fontaine94}.
Let $\coh^1_f(F_{n,w},M^*)$ be the image of
$\coh^1_f(F_{n,w},\MM_p^*)$ under the natural map
\[
	\coh^1(F_{n,w},\MM_p^*) \rightarrow
	\coh^1(F_{n,w},M^*).
\]
We set 
\[
	\PP_{\Sigma,f}(M^*/F_n) = \prod_{w \in \Sigma,
	w \nmid p} \frac{\coh^1(F_{n,w},M^*)}{\hur(F_{n,w},M^*)}
	\times \prod_{w \mid p}
	\frac{\coh^1(F_{n,w},M^*)}{\coh^1_f(F_{n,w},M^*)}.
\]
Then, the Bloch-Kato's Selmer group
of $M^*$ over $F_n$ is defined by
\[
	\sel_{\mathrm{BK}}(M^*/F_n) = \ker(\coh^1(F_\Sigma/F_n,M^*)
	\rightarrow \PP_{\Sigma,f}(M^*/F_n))
\]
and we set $\sel_{\mathrm{BK}}(M^*/F_\infty) = \varinjlim_n
\sel_{\mathrm{BK}}(M^*/F_n)$.

Recall that the definition of the signed Coleman
maps and thus of the signed Selmer groups depends on a choice of Hodge-compatible basis
of $\oplus_{v \mid p} \Dcrisv(T)$.
\begin{lemm}[{{\cite[Lemma 8.1]{BL2015}}}] \label{lemm:BKSigned}
	There exists a Hodge-compatible basis of $\oplus_{v \mid p} \Dcrisv(T)$ such
	that for any $\underline{I} \in \II_p$
	\[
		\coh^1_f(F_v,M^*) = \coh^1_{I_v}(F_v,M^*).
	\]
In particular, for such a basis,
	\[
		\sel_{\mathrm{BK}}(M^*/F) = \sel_{\underline{I}}(M^*/F).
	\]
\end{lemm}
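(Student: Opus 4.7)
The plan is to identify both local conditions with the same subspace of $\coh^1(F_v, M^*)$ via Kato's dual exponential map. On the signed side, Lemma~\ref{lemm:divisible} at $n = 0$ gives $\coh^1_{I_v}(F_v, M^*) = \overline{(\ker \col_{T^*, I_v^c})_0}$, while the Bloch--Kato exponential sequence identifies $\coh^1_f(F_v, M^*)$ with the image in $\coh^1(F_v, M^*)$ of $\ker(\exp^*)$, where
\[
	\exp^* : \coh^1(F_v, \MM_p^*) \longrightarrow \fil^0 \Dcrisv(\MM_p^*) \otimes_{\OO_{F_v}} F_v.
\]
So the goal reduces to choosing the Hodge-compatible basis so that these two subspaces coincide once the signed one is tensored with $\Qp$.

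The bridge is Perrin--Riou's interpolation formula: the specialisation of $\LL_{T^*, v}$ at the trivial character of $\Gamma$ recovers $\exp^*$ on the level-zero image of $\hiw(F_v, T^*)$, up to the Euler factor $(1 - \phi)(1 - p^{-1}\phi^{-1})^{-1}$, which is invertible by \textbf{(Slopes)}. Plugging this into the decomposition~\eqref{eq:decompoRegulator} for $T^*$ yields, up to that invertible factor,
\[
	\exp^*(z_0) \;=\; (u_1^*, \ldots, u_{g_v}^*) \cdot M_v^*(0) \cdot \bigl(\col_{T^*, v, i}|_{\chi^0}(z)\bigr)_{i=1}^{g_v}.
\]
Projecting onto $\fil^0 \Dcrisv(\MM_p^*)$---which by the pairing~\eqref{eq:pairingDieudonne} is spanned by $u_{d_v + 1}^*, \ldots, u_{g_v}^*$---retains only the last $g_v - d_v$ rows of $M_v^*(0)$. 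It therefore suffices to exhibit a Hodge-compatible basis for which these last $g_v - d_v$ rows of $M_v^*(0)$ form an invertible block supported precisely on the columns indexed by $I_v^c$.

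The key step is then the explicit construction of the basis. Starting from the defining limit $M_v = \lim_n M_{v,n}$ with $M_{v,n} = C_{\phi,v}^{n+1} C_{v,n} \cdots C_1$ and using $C_{v,n}(0) = \mathrm{diag}(I_{d_v},\, p\, I_{g_v - d_v}) \cdot C_v^{-1}$ for $n \geqslant 1$, one computes $M_v^*(0)$ as a convergent infinite product whose Hodge-filtration block is governed by a suitable normalisation of $C_v$ inside the two Hodge filtration pieces of $\Dcrisv(T)$. Exploiting \textbf{(Slopes)}, together with the duality~\eqref{eq:pairingDieudonne} expressing the Frobenius matrix on $\Dcrisv(T^*)$ in terms of that on $\Dcrisv(T)$, one arranges $C_v$ to be block-diagonal with respect to the partition $\{1, \ldots, d_v\} \sqcup \{d_v + 1, \ldots, g_v\}$. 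I expect this explicit normalisation---and the verification that the resulting block of $M_v^*(0)$ is invertible uniformly in $\underline{I} \in \II_p$---to be the main obstacle, since it requires careful bookkeeping with the infinite product and a slope argument to prevent degeneration of the diagonal blocks in the limit. Once achieved, the local conditions coincide at every $v \mid p$ and agree tautologically elsewhere, which gives the claimed equality $\sel_{\mathrm{BK}}(M^*/F) = \sel_{\underline{I}}(M^*/F)$.
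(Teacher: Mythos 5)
You should first note that the paper does not prove this statement at all: it is imported verbatim from \cite[Lemma 8.1]{BL2015}, the basis in question being a \emph{strongly admissible} one in the sense of \cite[Definition 3.2]{BLIntegral}. So your proposal has to stand as a self-contained proof, and it does not: the preliminary identifications are fine (Lemma~\ref{lemm:divisible} at $n=0$ identifies $\coh^1_{I_v}(F_v,M^*)$ with $\overline{(\ker\col_{T^*,I_v^c})_0}$, and $\coh^1_f(F_v,\MM_p^*)=\ker\exp^*$ is legitimate because \textbf{(Slopes)} kills $\Dcrisv(\MM_p)^{\phi=1}$ and $\Dcrisv(\MM_p^*)^{\phi=1}$), but the decisive step --- producing a Hodge-compatible basis with the block property you require of $M_v^*(0)$ --- is exactly what you defer as ``the main obstacle''. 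Since the existence of such a basis is the entire content of the lemma, what you have written is a reformulation of the statement, not a proof of it.

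Moreover, the criterion you reduce to is not achievable, so the reduction itself would fail. Evaluating at the trivial character, $\Phi_{p^k}(1)=p$ gives $C_{v,k}(0)=\bigl(\begin{smallmatrix} I_{d_v} & 0\\ 0 & pI_{g_v-d_v}\end{smallmatrix}\bigr)C_v^{-1}$, hence $C_{\phi,v}\,C_{v,k}(0)=I$ and the product defining $M_{v,n}$ telescopes: $M_{v,n}(0)=C_{\phi,v}$ for every $n$, so $M_v(0)=C_{\phi,v}$ (and likewise $M_v^*(0)$ is the Frobenius matrix for $T^*$) \emph{whatever} Hodge-compatible basis you choose; there is no infinite product to normalise and no freedom to exploit. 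In particular the $\fil^0$-rows of $M_v^*(0)$ are rows of an invertible matrix, and they cannot be ``supported precisely on the columns indexed by $I_v^c$'' simultaneously for the different sets $I_v$ occurring in $\II_p$: already for a supersingular elliptic curve ($g_v=2$, $d_v=1$) both $I_v=\{1\}$ and $I_v=\{2\}$ occur, and the single relevant row cannot be supported on column $1$ and on column $2$ at once. This also signals that the naive use of the interpolation formula at the trivial character is too coarse here: with $M_v^*(0)$ and the Euler-type factor both invertible, it cannot by itself distinguish the various signed conditions at the bottom layer, whereas the lemma asserts they all coincide with $\coh^1_f$. The genuine mechanism lies in finer properties of the \emph{images} of the signed Coleman maps encoded in strong admissibility, which is what the argument of \cite[Lemma 8.1]{BL2015} uses; you would need to reproduce that argument (or simply cite it, as the paper does) rather than the normalisation of $M_v^*(0)$ you propose.
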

The basis of the lemma is a \emph{strongly admissible basis} in the
sense of \cite[Definition 3.2]{BLIntegral}.

\section{Submodules of finite index}
We keep the notation of the previous section. Let $\underline{I} =
(I_v)_{v \mid p} \in \II_p$ and set $\underline{I}^c =
(I_v^c)_{v \mid p}$. The main goal of this section is to prove the following
theorem.
\begin{theo} \label{theo:submodules}
	Assume that
	$\sel_{\underline{I}}(M^*/F_\infty)$ and
	$\sel_{\underline{I}^c}(M/F_\infty)$ are cotorsion
	$\Zp[[\Gamma]]$-modules. Then
	$\sel_{\underline{I}}(M^*/F_\infty)$ has no proper
	sub-$\Zp[[\Gamma]]$-modules of finite index.
\end{theo}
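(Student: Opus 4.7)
The plan is to adapt Greenberg's strategy in \cite[Proposition 4.14]{GreenbergIwasawaEll}, extended to signed Selmer groups of supersingular elliptic curves by Kim \cite{KimSubmodules}, to the B\"uy\"ukboduk--Lei framework under consideration. By Pontryagin duality, the theorem is equivalent to: $\sel_{\underline{I}}(M^*/F_\infty)^{\vee}$ has no nonzero finite (equivalently, pseudo-null) $\Zp[[\Gamma]]$-submodule.

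The first major step is to establish the weak Leopoldt vanishing $\coh^2(F_\Sigma/F_\infty, M^*) = 0$ and, simultaneously, the surjectivity of the global-to-local restriction map. Hypothesis \textbf{(Tors.)} at some $v \mid p$ yields $\coh^0(F_v, M^*) = 0$ and hence $\coh^0(F_\infty, M^*) = 0$ using the embedding $F_\infty \hookrightarrow F_{v,\infty}$. The cotorsion hypothesis on $\sel_{\underline{I}^c}(M/F_\infty)$ enters here: placing the defining sequence of $\sel_{\underline{I}}(M^*/F_\infty)$ into the Iwasawa-theoretic Poitou--Tate global duality sequence, and using Lemma~\ref{lemm:KerOrthoComplement} to identify the orthogonal complement of the local conditions at primes above $p$, one recognises the continuation term as being Pontryagin-dual to an object controlled by $\sel_{\underline{I}^c}(M/F_\infty)$. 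The torsionness of the latter's Pontryagin dual then forces both vanishings.

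Second, I would combine the above with two further ingredients: (i) under weak Leopoldt together with $\coh^0(F_\infty, M^*) = 0$, the classical result of Greenberg--Jannsen (see e.g.\ \cite[Proposition 3]{GreenbergIwasawaRep}) ensures that $\coh^1(F_\Sigma/F_\infty, M^*)^\vee$ has no nonzero finite $\Lambda$-submodule; and (ii) the local conditions $\coh^1_{I_v}(F_{v,\infty}, M^*)$ are divisible by Lemma~\ref{lemm:divisible}, as is $\hur(F_{v,\infty}, M^*)$ for $v \nmid p$ by definition, so that $\mathcal{P}_{\Sigma,\underline{I}}(M^*/F_\infty)$ is a divisible $\Zp$-module. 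Combined with the surjectivity obtained in the first step, this yields a short exact sequence
\[
    0 \to \sel_{\underline{I}}(M^*/F_\infty) \to \coh^1(F_\Sigma/F_\infty, M^*) \to \mathcal{P}_{\Sigma,\underline{I}}(M^*/F_\infty) \to 0
\]
whose third term is divisible and whose middle term has no finite submodule in its dual. Dualising and running the diagram chase in the style of \cite[Proposition 4.14]{GreenbergIwasawaEll} then propagates the ``no finite submodule'' property to $\sel_{\underline{I}}(M^*/F_\infty)^{\vee}$.

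The principal obstacle I anticipate is the first step. It requires carefully setting up the Iwasawa-theoretic Poitou--Tate sequence in the BL setting, where local conditions are defined through $p$-adic Hodge-theoretic machinery rather than classical Bloch--Kato conditions, and reconciling Perrin-Riou's pairing on Iwasawa cohomology (used in Lemma~\ref{lemm:KerOrthoComplement}) with Tate's local pairing on discrete/compact cohomology (used in Poitou--Tate). Once the matching of these pairings is correctly carried out --- this is essentially the content built into the proof of Lemma~\ref{lemm:divisible} --- and weak Leopoldt together with the global-to-local surjectivity have been deduced from the dual cotorsion hypothesis, the remainder is a formal adaptation of Greenberg's classical argument.
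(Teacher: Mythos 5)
Your overall skeleton (global-to-local surjectivity, vanishing of $\coh^2$, Greenberg's ``no finite submodule'' result for $\coh^1$, divisibility of the local conditions) matches the paper's, but the step that is supposed to finish the proof does not work as you describe it. From the short exact sequence $0 \to \sel_{\underline{I}}(M^*/F_\infty) \to \coh^1(F_\Sigma/F_\infty,M^*) \to \PP_{\Sigma,\underline{I}}(M^*/F_\infty) \to 0$, the properties ``third term divisible'' and ``middle term has no nonzero finite submodule in its dual'' do \emph{not} propagate to the subobject: the sequence $0 \to \Z/p\Z \to \Qp/\Zp \xrightarrow{\,p\,} \Qp/\Zp \to 0$ (trivial $\Gamma$-action) has divisible quotient and middle dual $\Zp$ with no finite submodule, yet the sub is finite and nonzero. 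What Greenberg's argument, and the paper, actually do is prove $\sel_{\underline{I}}(M^*_s/F_\infty)_\Gamma = 0$ for all but finitely many \emph{twists} $s$, which suffices because a nonzero finite quotient of the Selmer group would force nonvanishing coinvariants for every twist. This requires three inputs absent from your sketch: (a) surjectivity of $\coh^1(F_\Sigma/F,M^*_s)\to\PP_{\Sigma,\underline{I}}(M^*_s/F)$ at the \emph{base} level, obtained from the control theorem for $\sel_{\underline{I}^c}(M_s/F_n)$ (Lemma~\ref{lemm:control}) plus \cite[Proposition 4.13]{GreenbergIwasawaEll} and Lemma~\ref{lemm:divisible} --- this is where the cotorsion hypothesis on the $\underline{I}^c$-Selmer group is used, and it only holds after discarding finitely many $s$, so the twisting is essential, not decorative; (b) surjectivity of $\PP_{\Sigma,\underline{I}}(M^*_s/F)\to\PP_{\Sigma,\underline{I}}(M^*_s/F_\infty)^\Gamma$ (Lemma~\ref{lemm:surjection}), which at $v\mid p$ uses not divisibility but the vanishing $\coh^1_{I_v}(F_{v,\infty},M^*)_\Gamma = 0$, coming from the fact that its dual embeds in a free $\Zp[[\Gamma]]$-module (Lemma~\ref{lemm:BL}); and (c) $\coh^1(F_\Sigma/F_\infty,M^*_s)_\Gamma = 0$, which rests on $\coh^2(F_\Sigma/F_\infty,M^*)=0$.

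Your first step is also only a sketch at its crux. In the Iwasawa-theoretic Poitou--Tate sequence the continuation term is the Pontryagin dual of the \emph{compact} signed Selmer group of $T$ cut out by the conditions $\ker\col_{T,I_v}\subset\hiw(F_v,T)$; it is not formally ``an object controlled by $\sel_{\underline{I}^c}(M/F_\infty)$'', and making it vanish is the real work: one needs its torsion-freeness (via \textbf{(Tors.)} and the structure of $\hiw$) together with a rank count using Lemma~\ref{lemm:BL} and a global Euler--Poincar\'e computation (the compact analogue of Lemma~\ref{lemm:corank}) before a cotorsion hypothesis yields vanishing rather than mere torsionness; the matching of Perrin-Riou's and Tate's pairings (Lemma~\ref{lemm:divisible}) is not the issue. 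Similarly, $\coh^2(F_\Sigma/F_\infty,M^*)=0$ does not follow from the cotorsionness of $\sel_{\underline{I}^c}(M/F_\infty)$ alone: in the paper it is deduced from the cotorsionness of $\sel_{\underline{I}}(M^*/F_\infty)$ itself, via the surjectivity over $F_\infty$, the corank computation of Lemma~\ref{lemm:corank}, the global Euler--Poincar\'e characteristic formula, and the cofreeness of $\coh^2$ (Greenberg's Propositions 3--5 in \cite{GreenbergIwasawaRep}). Your Poitou--Tate route over $F_\infty$ can be made to work as an alternative to the paper's finite-level control-theorem argument, but only once these rank and vanishing arguments are supplied, and in any case the twisting/base-level comparison in the final step cannot be avoided.
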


\begin{rema}
 	Under the additional hypothesis
	that $F$ is abelian over $\Q$ with degree prime to $p$ and that
	$g_+ = g_-$, an algebraic
	functional equation relating
	$\sel_{\underline{I}}(M^*/F_\infty)$ and
	$\sel_{\underline{I}^c}(M/F_\infty)$ has been proved in \cite{LPFunctionalEq}. In this
	situation, if one of these $\Zp[[\Gamma]]$-modules is a
	cotorsion $\Zp[[\Gamma]]$-module, then
	they both are.
\end{rema}

\subsection{The proof of Theorem~\ref{theo:submodules}}
We begin with a ``control theorem'' for these signed Selmer groups.
\begin{lemm} \label{lemm:control}
	For all but finitely many $s \in \Z$, the kernel and cokernel
	of the restriction map
	\[
		\sel_{\underline{I}^c}(M_s/F_n) \rightarrow
		\sel_{\underline{I}^c}(M_s/F_\infty)^{\Gamma_n}
	\]
	are finite of bounded orders as $n$ varies.
\end{lemm}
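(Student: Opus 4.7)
The approach is the classical control theorem of Mazur--Greenberg, adapted to signed Selmer groups as in \cite{KimSubmodules}. The plan is to consider the commutative diagram with exact rows
\begin{equation*}
\begin{array}{ccccccc}
0 & \to & \sel_{\underline{I}^c}(M_s/F_n) & \to & \coh^1(F_\Sigma/F_n, M_s) & \to & \PP_{\Sigma,\underline{I}^c}(M_s/F_n) \\
 & & \downarrow \alpha_n & & \downarrow \beta_n & & \downarrow \gamma_n \\
0 & \to & \sel_{\underline{I}^c}(M_s/F_\infty)^{\Gamma_n} & \to & \coh^1(F_\Sigma/F_\infty, M_s)^{\Gamma_n} & \to & \PP_{\Sigma,\underline{I}^c}(M_s/F_\infty)^{\Gamma_n}
\end{array}
\end{equation*}
where $\gamma_n = \bigoplus_{w \in \Sigma} \gamma_{n,w}$ is the sum of the local restriction maps. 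By the snake lemma, the finiteness and uniform boundedness (in $n$) of $\ker \alpha_n$ and $\coker \alpha_n$ reduce to the same statements for $\ker \beta_n$, $\coker \beta_n$ and each $\ker \gamma_{n,w}$.

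For the middle column, inflation--restriction gives $\ker \beta_n = \coh^1(\Gamma_n, \coh^0(F_\Sigma/F_\infty, M_s))$ and embeds $\coker \beta_n$ into $\coh^2(\Gamma_n, \coh^0(F_\Sigma/F_\infty, M_s))$; the latter vanishes since $\Gamma_n \simeq \Zp$ has $p$-cohomological dimension one. The module $\coh^0(F_\Sigma/F_\infty, M_s)$ coincides with $\coh^0(F_\Sigma/F_\infty, M)$ as an abelian group (since $\chi^s_{|\Gamma}$ is trivial on $G_{F_\infty}$) and is cofinitely generated over $\Zp$; its divisible part $X_{\mathrm{div}}$ satisfies $X_{\mathrm{div}} \otimes \Qp \simeq V^{G_{F_\infty}}$, and the twisted action has eigenvalues $\chi(\gamma)^s \lambda_i$ expressed in terms of the finite eigenvalue multiset $\{\lambda_i\}$ of $\gamma$ on $V^{G_{F_\infty}}$. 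For all but finitely many $s$, none of the $\chi(\gamma)^s \lambda_i$ is a $p$-power root of unity, which forces $\gamma^{p^n} - 1$ to act surjectively on $X_{\mathrm{div}}$ for every $n$; hence $\coh^1(\Gamma_n, X_{\mathrm{div}}) = 0$, and the long exact sequence attached to $0 \to X_{\mathrm{div}} \to \coh^0(F_\Sigma/F_\infty, M_s) \to X_{\mathrm{fin}} \to 0$ bounds $\ker \beta_n$ by $|X_{\mathrm{fin}}|$, independent of $n$.

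At each prime $w \in \Sigma$ with $w \nmid p$, a parallel analysis on the decomposition subgroup $\Gamma_{n,w} \subseteq \Gamma_n$ (which is procyclic, of $p$-cohomological dimension at most one), combined with the standard snake lemma comparison of the local quotients $\coh^1/\hur$ at $F_{n,w}$ and $F_{\infty,w}$, reduces $\ker \gamma_{n,w}$ to a $\Gamma_{n,w}$-cohomology group of a twist of $\coh^0(F_{\infty,w}, M)$, finite and uniformly bounded for all but finitely many $s$ by the same eigenvalue argument. For primes $v \mid p$, the definition $\coh^1_{I_v^c}(F_{v,n}, M_s) = \coh^1_{I_v^c}(F_{v,\infty}, M_s)^{\Gamma_n}$ makes the left vertical map in the local diagram an isomorphism, and hypothesis \textbf{(Tors.)} gives $\coh^0(F_{v,\infty}, M_s) = \coh^0(F_{v,\infty}, M) = 0$, making the middle vertical map an isomorphism as well; a diagram chase then forces $\ker \gamma_{n,v} = 0$ unconditionally.

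The main obstacle is to verify that the total exceptional set of $s$ remains finite after taking all primes into account. Each of the finitely many cofinitely generated $\Zp$-modules $\coh^0(F_\Sigma/F_\infty, M)$ and $\coh^0(F_{\infty,w}, M)$ (for $w \in \Sigma$ with $w \nmid p$) contributes a finite multiset of $\gamma$-eigenvalues on its divisible part; the exceptional $s$ from each module are those $s \in \Z$ for which some $\chi(\gamma)^s \lambda_i$ is a $p$-power root of unity. Since $\chi(\gamma)$ has infinite order in $1 + p\Zp$ and the set of $p$-power roots of unity in any fixed finite extension of $\Qp$ is finite, each $\lambda_i$ contributes at most finitely many exceptional $s$, and a finite union of finite sets is finite. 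Outside this union, the snake lemma estimates assemble into the desired uniform bound on $|\ker \alpha_n|$ and $|\coker \alpha_n|$.
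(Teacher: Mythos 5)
Your proposal is correct and follows essentially the same route as the paper: the same comparison diagram and snake lemma, inflation--restriction for the middle column, the definitional identity $\coh^1_{I_v^c}(F_{v,n},M_s)=\coh^1_{I_v^c}(F_{v,\infty},M_s)^{\Gamma_n}$ together with \textbf{(Tors.)} at $v\mid p$, and the twist/eigenvalue argument at the remaining primes of $\Sigma$. Two small remarks: \textbf{(Tors.)} already forces $\coh^0(F_\infty,M_s)=0$ (the global invariants inject into the local ones at a prime above $p$), so the middle restriction map is an isomorphism for \emph{every} $s$ and your divisible-part argument there is unnecessary; and at $w\nmid p$ your reduction of $\ker\gamma_{n,w}$ to a $\Gamma_{n,w}$-cohomology group of $\coh^0(F_{\infty,w},M_s)$ tacitly uses the vanishing of $\hur(F_{\infty,w},M_s)$, which the paper justifies by citing Perrin-Riou and which you should invoke explicitly to rule out a contribution from the unramified terms.
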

\begin{proof}
	The diagram
	\begin{equation} \label{diag:control}
		\begin{tikzcd}
			0 \arrow{r} & \sel_{\underline{I}^c}(M_s/F_n)
			\arrow{r} \arrow{d} & \coh^1(F_\Sigma/F_n,M_s)
			\arrow{r} \arrow{d} &
			\PP_{\Sigma,\underline{I}^c}(M_s/F_n) \arrow{d} \\
			0 \arrow{r}  &
			\sel_{\underline{I}^c}(M_s/F_\infty)^{\Gamma_n}
			\arrow{r} &
			\coh^1(F_\Sigma/F_\infty,M_s)^{\Gamma_n} \arrow{r} &
			\PP_{\Sigma,\underline{I}^c}(M_s/F_\infty)^{\Gamma_n}
		\end{tikzcd}
	\end{equation}
	is commutative.

	By \textbf{(Tors.)}, $\coh^0(F_{v,\infty},M_s)=0$ where $v$ is
	any prime of $F$ dividing $p$, thus the central map is an
	isomorphism by the inflation-restriction exact sequence.

	We now study the kernel of the rightmost vertical map.
	For a prime $v$ of $F$ dividing $p$, the diagram
	\begin{equation} \label{diag:controlp}
		\begin{tikzcd}
			0 \arrow{r} & \coh^1_{I_v}(F_{v,n},M_s)
			\arrow{r} \arrow{d} & \coh^1(F_{v,n},M_s)
			\arrow{r} \arrow{d} &
			\frac{\coh^1(F_{v,n},M_s)}{\coh^1_{I_v}(F_{v,n},M_s)}
			\arrow{r} \arrow{d} & 0 \\
			0 \arrow{r} &
			\coh^1_{I_v}(F_{v,\infty},M_s)^{\Gamma_n}
			\arrow{r} & \coh^1(F_{v,\infty},M_s)^{\Gamma_n} \arrow{r} &
			\left(\frac{\coh^1(F_{v,\infty},M_s)}{\coh^1_{I_v}(F_{v,\infty},M_s)}\right)^{\Gamma_n}
		\end{tikzcd}
	\end{equation}
	is commutative. The central vertical map is an isomorphism by
	the inflation-restriction exact sequence and the left-most
	vertical one is an isomorphism by definition, thus it follows
	from the snake lemma applied to the diagram~\eqref{diag:controlp} that the map 
	\[
		\frac{\coh^1(F_{v,n},M_s)}{\coh^1_{I_v}(F_{v,n},M_s)}
		\rightarrow \left(\frac{\coh^1(F_{v,\infty},M_s)}{\coh^1_{I_v}(F_{v,\infty},M_s)}\right)^{\Gamma_n}
	\]
	is an injection.

	For a prime $w$ of $F_n$ not dividing $p$ and a prime $w^\prime$ of
	$F_\infty$ above $w$, the diagram 
	\begin{equation} \label{diag:controlunr}
		\begin{tikzcd}
			0 \arrow{r} & \hur(F_{n,w},M_s)
			\arrow{r} \arrow{d} & \coh^1(F_{n,w},M_s)
			\arrow{r} \arrow{d} &
			\frac{\coh^1(F_{n,w},M_s)}{\hur(F_{n,w},M_s)}
			\arrow{r} \arrow{d} & 0 \\
			0 \arrow{r} &
			\hur(F_{\infty,w^\prime},M_s)^{\Gamma_n}
			\arrow{r} & \coh^1(F_{\infty,w^\prime},M_s)^{\Gamma_n} \arrow{r} &
			\left(\frac{\coh^1(F_{\infty,w^\prime},M_s)}{\hur(F_{\infty,w^\prime},M_s)}\right)^{\Gamma_n}
		\end{tikzcd}
	\end{equation}
	is commutative.
	If $w$ is archimedean, since $p$ is odd, then
	$\coh^1(F_{\infty,w^\prime},M_s)$ is trivial, and if $w$ is
	non-archimedean, then $\hur(F_{\infty,w^\prime},M_s)$ is trivial
	\cite[\S A.2.4]{PRLivre}.

	We now look at the kernel of the central vertical map in diagram \eqref{diag:controlunr}. From the
	inflation-restriction exact sequence, it is
	$\coh^1(F_{\infty,w^\prime}/F_{n,w},M_s^{G_{F_{\infty,w^\prime}}})$.
	If $w$ is archimedean, it splits completely in
	$F_\infty/F_n$ so this group is trivial.
	If $w$ is non-archimedean, it finitely decomposes in
	$F_\infty/F_n$, so that 
	$\gal(F_{\infty,w^\prime}/F_{n,w}) \simeq \Zp$ and is topologically
	generated by an element $\gamma_n$. Thus
	$\coh^1(F_{\infty,w^\prime}/F_{n,w},M_s^{G_{F_{\infty,w^\prime}}})$
	is isomorphic to 
	\[
		M_s^{G_{F_{\infty,w^\prime}}} / (\gamma_n - 1)
		M_s^{G_{F_{\infty,w^\prime}}}.
	\]
	One has the short exact sequence 
	\begin{center}
		\begin{tikzcd}
			0 \arrow{r} & M_s^{G_{F_{n,w}}} \arrow{r} &
			M_s^{G_{F_{\infty,w^\prime}}}\arrow{r}{(\gamma_n
			- 1)} &
			M_s^{G_{F_{\infty,w^\prime}}} \arrow{r} & 
		M_s^{G_{F_{\infty,w^\prime}}} / (\gamma - 1)
			M_s^{G_{F_{\infty,w^\prime}}} \arrow{r} & 0.
		\end{tikzcd}
	\end{center}	
	For all but finitely many $s \in \Z$, $M_s^{G_{F_{n,w}}}$ is finite for every
	$n$, hence $M_s^{G_{F_{\infty,w^\prime}}} / (\gamma - 1)
	M_s^{G_{F_{\infty,w^\prime}}}$ is finite. So,
	$(M_s^{G_{F_{\infty,w^\prime}}})_\mathrm{div}$ the maximal
	divisible subgroup of $M_s^{G_{F_{\infty,w^\prime}}}$ is contained in
	$(\gamma_n - 1)M_s^{G_{F_{\infty,w^\prime}}}$ and the order of
	$M_s^{G_{F_{\infty,w^\prime}}} / (\gamma_n - 1)
	M_s^{G_{F_{\infty,w^\prime}}}$ is bounded by the one of
	$M_s^{G_{F_{\infty,w^\prime}}}/(M_s^{G_{F_{\infty,w^\prime}}})_\mathrm{div}$. 

	Thus, the snake lemma applied to the diagram~\eqref{diag:controlunr} implies that the map
	\[
	\frac{\coh^1(F_{n,w},M_s)}{\hur(F_{n,w},M_s)} \rightarrow \left(\frac{\coh^1(F_{\infty,w^\prime},M_s)}{\hur(F_{\infty,w^\prime},M_s)}\right)^{\Gamma_n}
	\]
	has finite kernel of bounded orders as $n$ varies.

	Finally, the result follows from the snake lemma applied to the
	diagram~\eqref{diag:control}.
\end{proof}

\begin{prop} \label{prop:surjection}
	Assume that
	$\sel_{\underline{I}^c}(M/F_\infty)$ is a cotorsion
	$\Zp[[\Gamma]]$-module. Then for all but finitely many $s \in
	\Z$, the map
	\[
		\coh^1(F_\Sigma/F,M^*_s) \rightarrow
		\PP_{\Sigma,\underline{I}}(M_s^*/F)
	\]
	is surjective,
	and, for all $s \in \Z$, the map
	\[
		\coh^1(F_\Sigma/F_\infty,M^*_s) \rightarrow
		\PP_{\Sigma,\underline{I}}(M^*_s/F_\infty)
	\]
	is surjective.
\end{prop}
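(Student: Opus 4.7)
The plan is to apply the Cassels--Poitou--Tate (CPT) exact sequence in each case and identify the ``dual'' Selmer group that appears via the duality between signed local conditions established in Lemmas~\ref{lemm:KerOrthoComplement} and \ref{lemm:divisible}.

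For the $F$-level statement (almost all $s$), the CPT sequence for $M^*_s$ over $F$ reads
\[
	\coh^1(F_\Sigma/F, M^*_s) \to \PP_{\Sigma, \underline{I}}(M^*_s/F) \to \sel'(T_{-s}/F)^\vee \to \coh^2(F_\Sigma/F, M^*_s),
\]
where $\sel'(T_{-s}/F)$ is the Selmer group of the Tate dual $T_{-s}$ whose local conditions at $v \mid p$ are the orthogonal complements of our signed ones (identified via Lemma~\ref{lemm:divisible}), together with the unramified conditions elsewhere. Surjectivity would follow from the vanishing, for all but finitely many $s$, of both $\coh^2(F_\Sigma/F, M^*_s)$ and $\sel'(T_{-s}/F)$. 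The key input is a weight argument: by \textbf{(H.-T.)}, the Tate twist shifts the Hodge--Tate weights by $-s$, so for $s$ outside a finite set, $\coh^0(F_v, T_{-s}) = 0$ at every $v \mid p$. By Tate's local duality this kills the local $\coh^2(F_v, M^*_s)$, and combined with the global Poitou--Tate sequence and the generic vanishing of $\coh^0(F_\Sigma/F, T_{-s})$, one obtains both vanishings.

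For the $F_\infty$-level statement (all $s$), it suffices to treat $s = 0$, since $M^*_s = M^*$ as $G_{F_\infty}$-modules and the localization map varies $\Gamma$-equivariantly with $s$. The Iwasawa-theoretic CPT sequence
\[
	\coh^1(F_\Sigma/F_\infty, M^*) \to \PP_{\Sigma, \underline{I}}(M^*/F_\infty) \to \sel^{\mathrm{Iw}}(T/F_\infty)^\vee \to \coh^2(F_\Sigma/F_\infty, M^*)
\]
features the compact Iwasawa Selmer group $\sel^{\mathrm{Iw}}(T/F_\infty)$ defined by the local conditions $\ker \col_{T, I_v} \subset \hiw(F_v, T)$ (dual to our signed ones via Lemma~\ref{lemm:KerOrthoComplement}). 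Weak Leopoldt for the cyclotomic tower kills the right-hand $\coh^2$, so surjectivity reduces to $\sel^{\mathrm{Iw}}(T/F_\infty) = 0$.

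This last vanishing is the main obstacle. I plan to deduce it from the cotorsion hypothesis on $\sel_{\underline{I}^c}(M/F_\infty)$ by a combined rank-and-torsion-freeness argument. Running the analogous Iwasawa CPT sequence for the $\underline{I}^c$-Selmer of $M$ and comparing $\Lambda$-ranks (using Lemma~\ref{lemm:BL}(2) to compute the local contributions, together with the definitional condition $\sum_{v \mid p}|I_v| = g_-$), the cotorsion hypothesis forces $\sel^{\mathrm{Iw}}(T/F_\infty)$ to be $\Lambda$-torsion. On the other hand, exactly as in the proof of Lemma~\ref{lemm:divisible}, hypothesis \textbf{(Tors.)} together with the fact that $\gal(F_\infty/F)$ is pro-$p$ gives $\coh^0(F_\infty, M) = 0$; the long exact sequence of $0 \to T \to V \to M \to 0$ then embeds $\coh^1(F_\Sigma/F_\infty, T)$ into the $\Qp$-vector space $\coh^1(F_\Sigma/F_\infty, V)$, so that $\coh^1(F_\Sigma/F_\infty, T)$ is $\Zp$-torsion free. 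Pushing these two properties through the $\Lambda$-module structure should force $\sel^{\mathrm{Iw}}(T/F_\infty) = 0$, completing the proof.
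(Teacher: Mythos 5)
Your overall strategy (control the cokernel of the localization map by a dual Selmer group via Poitou--Tate duality) is the right spirit, but both halves have genuine gaps. For the $F$-level statement, the decisive step is unjustified: the vanishing of the compact dual Selmer group $\sel'(T_{-s}/F)$ cannot follow from the generic vanishing of $\coh^0(F_v,T_{-s})$, $\coh^0(F_\Sigma/F,T_{-s})$ and local $\coh^2$'s, because a Selmer group is a subgroup of $\coh^1$ and is not controlled by $\coh^0$/$\coh^2$ terms; note that your argument for this half never uses the hypothesis that $\sel_{\underline{I}^c}(M/F_\infty)$ is cotorsion, whereas that hypothesis is exactly what makes the dual Selmer groups small (if the dual Selmer group had positive $\Lambda$-corank, its twists would stay infinite for every $s$ and surjectivity would fail). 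The paper's proof is precisely the missing input: cotorsionness plus twisting plus the control theorem (Lemma~\ref{lemm:control}) give finiteness of the \emph{discrete} dual Selmer groups $\sel_{\underline{I}^c}(M_s/F_n)$ for all but finitely many $s$, and then Greenberg's Proposition~4.13 (which also requires the divisibility of the local conditions, Lemma~\ref{lemm:divisible}) identifies the cokernel of $\coh^1(F_\Sigma/F_n,M^*_{-s})\to\PP_{\Sigma,\underline{I}}(M^*_{-s}/F_n)$ with $\coh^0(F_n,M_s)^\vee$, which vanishes by \textbf{(Tors.)}; the $F_\infty$-statement then follows simply by passing to the direct limit over $n$ and untwisting, with no need to kill any $\coh^2$ or any compact Selmer group.

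For the $F_\infty$-level argument as you propose it, two further problems arise. First, invoking weak Leopoldt ($\coh^2(F_\Sigma/F_\infty,M^*)=0$) assumes an open conjecture in this generality; in the paper this vanishing is only obtained later (Proposition~\ref{prop:submodule}) using \emph{both} cotorsion hypotheses together with the already-established Proposition~\ref{prop:surjection}, so using it here is not available under the stated hypothesis (it is, fortunately, not needed for surjectivity if the dual Selmer term vanishes). Second, and more seriously, the concluding step ``$\Lambda$-torsion together with $\Zp$-torsion-freeness forces vanishing'' is a non sequitur: $\Lambda/(X)\simeq\Zp$ is $\Lambda$-torsion and $\Zp$-torsion-free. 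Moreover the relevant compact object is the Iwasawa cohomology $\varprojlim_n\coh^1(F_\Sigma/F_n,T)$, not $\coh^1(F_\Sigma/F_\infty,T)$. A repaired version of your route would need (i) a genuine global-duality rank argument showing that cotorsionness of $\sel_{\underline{I}^c}(M/F_\infty)$ forces the compact signed Selmer group inside $\varprojlim_n\coh^1(F_\Sigma/F_n,T)$ to be $\Lambda$-torsion, and (ii) the fact that this Iwasawa cohomology module has no nonzero $\Lambda$-torsion submodule under \textbf{(Tors.)}, which is true but requires its own proof or citation; neither is supplied by what you wrote.
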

\begin{proof}
	If $\sel_{\underline{I}^c}(M/F_\infty)$ is a cotorsion
	$\Zp[[\Gamma]]$-module, then, for all but finitely many $s \in
	\Z$, $(\sel_{\underline{I}^c}(M/F_\infty) \otimes
	\chi^s)^{\Gamma_n} =
	(\sel_{\underline{I}^c}(M_s/F_\infty))^{\Gamma_n}$ is finite
	for every $n$. Thus, by Lemma~\ref{lemm:control} and possibly
	avoiding another finite number of $s \in \Z$,
	$\sel_{\underline{I}^c}(M_s/F_n)$ is finite for every $n$. For
	such an $s$ and any $n$, the finiteness of
	$\sel_{\underline{I}^c}(M_s/F_n)$ and Lemma~\ref{lemm:divisible}
	 allow us to apply \cite[Proposition
	4.13]{GreenbergIwasawaEll} which
	says that the cokernel of 
	\[
		f_{n,-s}:\coh^1(F_\Sigma/F_n,M_{-s}^*) \rightarrow
		\PP_{\Sigma,\underline{I}}(M_{-s}^*/F_n)
	\]
	is the Pontryagin dual of $\coh^0(F_n,M_s)$. By
	\textbf{(Tors.)}, $\coh^0(F,M) = 0$, thus $\coh^0(F_\infty,M)=0$
	as $\Zp$ is a pro-$p$-group. Furthermore $M_s \simeq M$ as
	$\gal(\overline{F}/F_\infty)$-modules, hence
	$\coh^0(F_\infty,M_s) = 0$ and finally $\coh^0(F_n,M_s)$ is
	trivial for any $n$. 
	Therefore, the map $f_{n,-s}$ is surjective for any $n$.
	Passing to direct limit relative to restriction maps, the surjection of the maps $f_{n,-s}$ implies the surjection of 
	\[
		f_{\infty,-s}: \coh^1(F_\Sigma/F_\infty,M^*_{-s}) \rightarrow
		\PP_{\Sigma,\underline{I}}(M^*_{-s}/F_\infty).
	\]
	Since the map $f_{\infty,-s}$ is the map
	\[
		f_\infty :\coh^1(F_\Sigma/F_\infty,M^*) \rightarrow
		\PP_{\Sigma,\underline{I}}(M^*/F_\infty)
	\]
	twisted by $\chi^{-s}_{| \Gamma}$,
	this concludes the proof of the proposition.
\end{proof}

\begin{lemm} \label{lemm:surjection}
	For all $s\in \Z$, the restriction map
	\[
		\PP_{\Sigma,\underline{I}}(M^*_s/F) \rightarrow
		\PP_{\Sigma,\underline{I}}(M^*_s/F_\infty)^\Gamma
	\]
	is surjective.
\end{lemm}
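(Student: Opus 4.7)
The plan is to verify surjectivity factor by factor in the product defining $\PP_{\Sigma,\underline{I}}$, distinguishing archimedean primes, non-archimedean primes $w\nmid p$, and primes $v\mid p$.

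For archimedean $w$, $\coh^1(F_{\infty,w^\prime},M_s^*)=0$ because $p$ is odd, so the target factor is trivial. For a non-archimedean $w\in\Sigma$ with $w\nmid p$, the computation in the proof of Lemma~\ref{lemm:control} gives $\hur(F_{\infty,w^\prime},M_s^*)=0$, so the target factor becomes $\coh^1(F_{\infty,w^\prime},M_s^*)^\Gamma$. The inflation-restriction sequence shows the cokernel of $\coh^1(F_w,M_s^*)\to\coh^1(F_{\infty,w^\prime},M_s^*)^\Gamma$ injects into $\coh^2\bigl(\gal(F_{\infty,w^\prime}/F_w),(M_s^*)^{G_{F_{\infty,w^\prime}}}\bigr)$, which vanishes because $\gal(F_{\infty,w^\prime}/F_w)$ is a quotient of $\Gamma\simeq\Zp$, hence a procyclic pro-$p$-group of cohomological dimension at most~$1$.

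For $v\mid p$, I would consider the diagram analogous to~\eqref{diag:controlp} in the case $n=0$, $\Gamma_0=\Gamma$, in which the central vertical map is an isomorphism by inflation-restriction (using $\coh^0(F_{v,\infty},M_s^*)=0$ from \textbf{(Tors.)}) and the left-most vertical map is an equality by the definition $\coh^1_{I_v}(F_v,M_s^*)=\coh^1_{I_v}(F_{v,\infty},M_s^*)^\Gamma$. A direct diagram chase shows that the cokernel of the right vertical map injects into $\coh^1\bigl(\Gamma,\coh^1_{I_v}(F_{v,\infty},M_s^*)\bigr)$, reducing the lemma to the vanishing of this $\Gamma$-cohomology group.

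This last vanishing is the main obstacle of the argument. The perfect Tate pairing~\eqref{eq:cohompairing} identifies the orthogonal complement $\coh^1_{I_v}(F_v(\mu_{p^\infty}),M^*)$ with the Pontryagin dual of $\hiw(F_v,T)/\ker\col_{T,I_v}\cong\im\col_{T,I_v}$. Taking $\Delta$-invariants on the discrete side corresponds on the dual side to extracting the $\Delta$-coinvariants, which, since $|\Delta|$ is coprime to $p$, equal the trivial $\Delta$-isotypic component; hence $\coh^1_{I_v}(F_{v,\infty},M^*)^\vee$ is identified with $(\im\col_{T,I_v})^{\eta_0}$, where $\eta_0$ is the trivial character. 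By Lemma~\ref{lemm:BL}, this is a $\Zp[[\Gamma]]$-submodule of finite index in the free module $\Zp[[\Gamma]]^{|I_v|}$, hence torsion-free over the integral domain $\Zp[[\Gamma]]$. On the twist $\coh^1_{I_v}(F_{v,\infty},M_s^*)^\vee$, multiplication by $\gamma-1$ corresponds to multiplication by the nonzero element $\chi(\gamma)^{-s}\gamma-1$ of $\Zp[[\Gamma]]$, which is therefore injective; in particular $\bigl(\coh^1_{I_v}(F_{v,\infty},M_s^*)^\vee\bigr)^\Gamma=0$. Pontryagin duality then yields $\coh^1\bigl(\Gamma,\coh^1_{I_v}(F_{v,\infty},M_s^*)\bigr)=0$, completing the argument.
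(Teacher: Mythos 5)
Your proof is correct and follows essentially the same route as the paper: the archimedean factors vanish since $p$ is odd, the factors at $w\nmid p$ are handled by the vanishing of $\hur(F_{\infty,w'},M_s^*)$ together with the $p$-cohomological dimension one of the decomposition group, and the factors at $v\mid p$ reduce, via \textbf{(Tors.)} and the definition of $\coh^1_{I_v}(F_v,M_s^*)$, to the vanishing of $\coh^1\bigl(\Gamma,\coh^1_{I_v}(F_{v,\infty},M_s^*)\bigr)$, which both you and the paper deduce from Lemma~\ref{lemm:BL} (the dual of the local condition sits inside a free $\Zp[[\Gamma]]$-module, hence is $(\gamma-1)$-torsion-free, a fact unaffected by twisting). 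Your write-up merely makes explicit the duality identification with $\im\col_{T,I_v}$ and the treatment of the twist, which the paper leaves implicit.
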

\begin{proof}
	We have
	\[
		\PP_{\Sigma,\underline{I}}(M^*_s/F_\infty) = \prod_{w \in \Sigma,
	w \nmid p}
	\frac{\coh^1(F_{\infty,w},M^*_s)}{\hur(F_{\infty,w},M^*_s)}
	\times \prod_{v \mid p}
	\frac{\coh^1(F_{v,\infty},M^*_s)}{\coh^1_{I_v}(F_{v,\infty},M^*_s)} .
	\]
	If $w$ is archimedean, since $p$ is odd,
	$\coh^1(F_{\infty,w},M^*_s)$ is trivial.
	If $v$ is a non-archimedean prime of $F$ not dividing $p$, the
	surjection 
	\[
		\frac{\coh^1(F_v,M^*_s)}{\hur(F_v,M^*_s)} \rightarrow
		\left(\prod_{w\mid v}
		\frac{\coh^1(F_{\infty,w},M^*_s)}{\hur(F_{\infty,w},M^*_s)}\right)^{\Gamma}
	\]
	follows from the fact that $\hur(F_{\infty,w},M^*_s)$ is
	trivial and $\Gamma$ has $p$-cohomological dimension $1$.
	Finally, if $v$ is a prime of $F$ dividing $p$, then the Pontryagin dual
	of $\coh^1_{I_v}(F_{v,\infty},M^*)$ is contained in a free
	$\Zp[[\Gamma]]$-module by Lemma~\ref{lemm:BL}, thus
	$\coh^1_{I_v}(F_{v,\infty},M^*)_{\Gamma} = 0$. Hence, $\coh^1_{I_v}(F_{v,\infty},M^*_s)_{\Gamma} = 0$ and we have an
	exact sequence 
	\begin{equation} \label{eq:surj}
		0 \rightarrow \coh^1_{I_v}(F_{v,\infty},M^*_s)^{\Gamma}
		\rightarrow \coh^1(F_{v,\infty},M^*_s)^{\Gamma}
		\rightarrow
		\left(\frac{\coh^1(F_{v,\infty},M^*_s)}{\coh^1_{I_v}(F_{v,\infty},M^*_s)}\right)^{\Gamma}
		\rightarrow 0.
	\end{equation}
	By \textbf{(Tors.)}, we know that $\coh^1(F_v,M^*_s)
	\simeq \coh^1(F_{v,\infty},M^*_s)^\Gamma$. Thus, by definition of
	$\coh^1_{I_v}(F_v,M^*_s)$ and the exact sequence~\eqref{eq:surj},
	the map
	\[
		\frac{\coh^1(F_{v},M^*_s)}{\coh^1_{I_v}(F_{v},M^*_s)}\rightarrow
		\left(\frac{\coh^1(F_{v,\infty},M^*_s)}{\coh^1_{I_v}(F_{v,\infty},M^*_s)}\right)^{\Gamma}
	\]
	is surjective.
\end{proof}

\begin{lemm} \label{lemm:corank}
	For all $s \in \Z$, the $\Zp[[\Gamma]]$-corank of
	$\PP_{\Sigma,\underline{I}}(M^*_s/F_\infty)$ is $g_+$.  
\end{lemm}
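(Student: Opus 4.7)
The plan is to decompose $\PP_{\Sigma,\underline{I}}(M^*_s/F_\infty)$ into its local factors, compute the $\Zp[[\Gamma]]$-corank of each, and sum the contributions. Because $\coh^1(F_{v,\infty},M^*_s) = \coh^1(F_{v,\infty},M^*)\otimes\chi^s_{|\Gamma}$ and $\coh^1_{I_v}(F_{v,\infty},M^*_s) = \coh^1_{I_v}(F_{v,\infty},M^*)\otimes\chi^s_{|\Gamma}$ by definition, and because twisting by a continuous character of $\Gamma$ is realized through an automorphism of $\Zp[[\Gamma]]$ and thus preserves $\Zp[[\Gamma]]$-coranks, it suffices to treat the case $s=0$.

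For archimedean $w \in \Sigma$ the factor vanishes since $p$ is odd. For finite primes $w \in \Sigma$ with $w \nmid p$, one has $\hur(F_{\infty,w},M^*)=0$ (as already noted in the proof of Lemma~\ref{lemm:control}), and $\coh^1(F_{\infty,w},M^*)$ is $\Zp[[\Gamma]]$-cotorsion by standard arguments: the prime $w$ decomposes finitely in $F_\infty/F$, and along a $\Zp$-extension of a local field whose residue characteristic differs from $p$, the local $\coh^1$ of $M^*$ is cotorsion. Hence these primes contribute $0$.

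All of the corank therefore comes from primes $v \mid p$. Here the key tool is the perfect pairing~\eqref{eq:cohompairing} together with the definition of $\coh^1_{I_v}(F_v(\mu_{p^\infty}),M^*)$ as the orthogonal complement of $\ker\col_{T,I_v}$ under that pairing. Pontryagin-dualizing over $\Lambda$ yields, up to the involution $\gamma\mapsto\gamma^{-1}$ on $\Lambda$, a canonical isomorphism
\[
\bigl(\coh^1(F_v(\mu_{p^\infty}),M^*)/\coh^1_{I_v}(F_v(\mu_{p^\infty}),M^*)\bigr)^{\vee} \simeq \ker\col_{T,I_v},
\]
which by Lemma~\ref{lemm:BL}(2) is free over $\Lambda$ of rank $g_v-|I_v|$. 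Passing to the trivial $\Delta$-component (which preserves ranks since $|\Delta|$ is prime to $p$) gives that $\coh^1(F_{v,\infty},M^*)/\coh^1_{I_v}(F_{v,\infty},M^*)$ has $\Zp[[\Gamma]]$-corank $g_v-|I_v|$.

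Summing over $v\mid p$ then yields
\[
\sum_{v\mid p}(g_v-|I_v|) = g - g_- = g_+,
\]
using the defining equality $\sum_{v\mid p}|I_v|=g_-$ for $\underline{I}\in\II_p$. The main delicate step is the Pontryagin duality argument in the third paragraph: one must verify that~\eqref{eq:cohompairing} is a perfect pairing of $\Lambda$-modules that sends closed $\Lambda$-submodules to orthogonal complements of matching $\Lambda$-rank, so that Lemma~\ref{lemm:BL}(2) translates cleanly into a corank statement for the local quotient at $v$; once this is in place, the rest of the argument is a bookkeeping over $\Sigma$.
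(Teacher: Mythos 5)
Your proof is correct and follows essentially the same route as the paper: archimedean factors vanish since $p$ is odd, the factors at $w\nmid p$ are $\Zp[[\Gamma]]$-cotorsion (the paper simply cites \cite[Proposition 2]{GreenbergIwasawaRep} here), the quotient at each $v\mid p$ has Pontryagin dual $\ker\col_{T,I_v}$, hence corank $g_v-|I_v|$ by Lemma~\ref{lemm:BL}, and the sum over $v\mid p$ gives $g-g_-=g_+$. The duality step you flag as delicate is exactly what the paper invokes ("by definition" of $\coh^1_{I_v}$ as an orthogonal complement under the perfect limit of Tate's pairings), and your reduction to $s=0$ via the twisting automorphism of $\Zp[[\Gamma]]$ is just a repackaging of the paper's remark that twisting by $\chi^s_{|\Gamma}$ does not change coranks.
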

\begin{proof}
	If $w$ is archimedean, since $p$ is odd,
	$\coh^1(F_{\infty,w},M_s^*)$ is trivial. If $w$ is a
	non-archimedean prime not dividing $p$ above a prime $v$ of $F$,
	by \cite[Proposition 2]{GreenbergIwasawaRep}, $\coh^1(F_{\infty,w},M_s^*)$ is
	cotorsion. 
	Finally, by definition, the Pontryagin dual of 
	$\frac{\coh^1(F_{v,\infty},M^*)}{\coh^1_{I_v}(F_{v,\infty},M^*)}$ 
	is isomorphic to $\ker \col_{T,I_v}$ which is of rank $g_v -
	|I_v|$ by Lemma~\ref{lemm:BL}. Since
	\[
		\frac{\coh^1(F_{v,\infty},M^*_s)}{\coh^1_{I_v}(F_{v,\infty},M^*_s)} =
	\frac{\coh^1(F_{v,\infty},M^*)\otimes\chi^s_{| \Gamma}}{\coh^1_{I_v}(F_{v,\infty},M^*)\otimes\chi^s_{| \Gamma}},
	\]
	the corank of $\frac{\coh^1(F_{v,\infty},M^*_s)}{\coh^1_{I_v}(F_{v,\infty},M^*_s)}$ is also $g_v -|I_v|$.
	Therefore, from our choice of
	$\underline{I}$, the corank of
	$\PP_{\Sigma,\underline{I}}(M^*_s/F_\infty)$ is 
	\[
		\sum_{v \mid p} g_v - |I_v| = g - g_- = g_+.
	\]
\end{proof}

Proposition~\ref{prop:surjection} and Lemma~\ref{lemm:corank} enable to compute the corank of the Bloch-Kato Selmer
group.
\begin{coro} \label{cong:coroBKrank}
	Assume that $\sel_{\underline{I}}(M^*/F_\infty)$ and
	$\sel_{\underline{I}^c}(M/F_\infty)$ are cotorsion
	$\Zp[[\Gamma]]$-modules. Then the $\Zp[[\Gamma]]$-corank of
	$\sel_{\mathrm{BK}}(M^*/F_\infty)$ is $g_+$.
\end{coro}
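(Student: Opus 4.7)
The plan is to combine Proposition~\ref{prop:surjection} with Lemma~\ref{lemm:corank} to first pin down the $\Zp[[\Gamma]]$-corank of the global cohomology group $\coh^1(F_\Sigma/F_\infty, M^*)$, and then to transfer the value to the Bloch--Kato Selmer group.

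First, I would apply the second assertion of Proposition~\ref{prop:surjection} at $s=0$ (valid thanks to the cotorsion hypothesis on $\sel_{\underline{I}^c}(M/F_\infty)$): the defining map $\coh^1(F_\Sigma/F_\infty, M^*) \to \PP_{\Sigma, \underline{I}}(M^*/F_\infty)$ is surjective, giving the short exact sequence
\[
0 \to \sel_{\underline{I}}(M^*/F_\infty) \to \coh^1(F_\Sigma/F_\infty, M^*) \to \PP_{\Sigma, \underline{I}}(M^*/F_\infty) \to 0.
\]
Taking $\Zp[[\Gamma]]$-coranks, the cotorsion hypothesis on $\sel_{\underline{I}}(M^*/F_\infty)$ kills the leftmost term and Lemma~\ref{lemm:corank} identifies the rightmost one as $g_+$, hence $\corank_{\Zp[[\Gamma]]} \coh^1(F_\Sigma/F_\infty, M^*) = g_+$.

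Second, I would transfer this value to $\sel_{\mathrm{BK}}(M^*/F_\infty)$. The inclusion $\sel_{\mathrm{BK}}(M^*/F_\infty) \hookrightarrow \coh^1(F_\Sigma/F_\infty, M^*)$ gives at once the upper bound $\corank \sel_{\mathrm{BK}}(M^*/F_\infty) \leqslant g_+$. For the matching lower bound, I would fix a strongly admissible basis (Lemma~\ref{lemm:BKSigned}), so that $\coh^1_f(F_v, M^*) = \coh^1_{I_v}(F_v, M^*)$ at the base field. A level-by-level computation over the cyclotomic tower then shows, using the Bloch--Kato dimension formula for $\coh^1_f$ and Lemma~\ref{lemm:BL} for $\coh^1_{I_v}$, that both local conditions at $v\mid p$ have the same $\Zp[[\Gamma]]$-corank $d_v$, so that $\PP_{\Sigma, f}(M^*/F_\infty)$ and $\PP_{\Sigma, \underline{I}}(M^*/F_\infty)$ have equal corank $g_+$; a Poitou--Tate style global duality input then forces the image of $\coh^1(F_\Sigma/F_\infty, M^*)$ in $\PP_{\Sigma, f}(M^*/F_\infty)$ to be $\Zp[[\Gamma]]$-cotorsion, and the stated corank follows.

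The principal obstacle lies in this last step: although $\coh^1_f$ and $\coh^1_{I_v}$ at primes above $p$ have matching coranks over the cyclotomic tower, they are genuinely different submodules of $\coh^1(F_{v,\infty}, M^*)$, so the equality of corank for the two Selmer groups cannot come from a direct containment. One must therefore either invoke Poitou--Tate global duality (identifying the cokernel of the global-to-local map with a dual Selmer group for $T$ whose contribution to corank is controlled by the cotorsion hypotheses) or argue through an explicit corank balance in the defining exact sequence for $\sel_{\mathrm{BK}}$; this genuinely global input is what extracts the full corank $g_+$ for the Bloch--Kato Selmer group from the corank of $\coh^1(F_\Sigma/F_\infty, M^*)$.
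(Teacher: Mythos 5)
Your first step is fine (and matches Proposition~\ref{prop:submodule}): the surjectivity from Proposition~\ref{prop:surjection} plus Lemma~\ref{lemm:corank} and the cotorsion hypothesis give $\corank_{\Zp[[\Gamma]]}\coh^1(F_\Sigma/F_\infty,M^*)=g_+$, and the inclusion of $\sel_{\mathrm{BK}}(M^*/F_\infty)$ into this group gives the upper bound. The gap is in your lower bound. Your key intermediate claim --- that over the cyclotomic tower the Bloch--Kato condition $\coh^1_f(F_{v,\infty},M^*)$ at $v\mid p$ has the same $\Zp[[\Gamma]]$-corank as $\coh^1_{I_v}(F_{v,\infty},M^*)$, so that $\PP_{\Sigma,f}(M^*/F_\infty)$ has corank $g_+$ --- is false in the non-ordinary setting considered here. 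The actual fact, which is the heart of the paper's proof, is Perrin-Riou's theorem on universal norms \cite[Corollaire 0.5]{PRNormes}: since $\MM_p$ has no nonzero subrepresentation with strictly positive Hodge--Tate weights at $v\mid p$ and $(\MM_p)^{G_{F_{v,\infty}}}=0$, one has
\[
\coh^1_f(F_{v,\infty},M^*)=\coh^1(F_{v,\infty},M^*),
\]
i.e.\ the local Bloch--Kato condition at $p$ becomes \emph{all} of the local cohomology (corank $g_v$, not $d_v$) at the infinite level. Without this input your "Poitou--Tate style" step is also unjustified: if $\PP_{\Sigma,f}(M^*/F_\infty)$ really had corank $g_+$ there would be no reason for the image of the global-to-local map to be cotorsion --- in the ordinary (Greenberg-condition) analogue that image has full corank and the Selmer group is cotorsion --- so your mechanism would not distinguish the situations and cannot by itself extract corank $g_+$.

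With Perrin-Riou's identity in hand, the argument is short and needs neither a strongly admissible basis (Lemma~\ref{lemm:BKSigned} is not available here anyway, since $\sel_{\underline{I}}$ in the hypothesis is attached to the already-fixed Hodge-compatible basis) nor any duality: comparing the defining sequences of $\sel_{\underline{I}}(M^*/F_\infty)$ (with the surjectivity of Proposition~\ref{prop:surjection}) and of $\sel_{\mathrm{BK}}(M^*/F_\infty)$, whose local conditions now differ only at $v\mid p$, the snake lemma yields
\[
0 \rightarrow \sel_{\underline{I}}(M^*/F_\infty) \rightarrow \sel_{\mathrm{BK}}(M^*/F_\infty) \rightarrow \prod_{v\mid p}\frac{\coh^1(F_{v,\infty},M^*)}{\coh^1_{I_v}(F_{v,\infty},M^*)} \rightarrow 0,
\]
and since the left term is cotorsion by hypothesis while the right term has corank $\sum_{v\mid p}(g_v-|I_v|)=g_+$ by Lemma~\ref{lemm:BL} (as in the proof of Lemma~\ref{lemm:corank}), the corollary follows. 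You should incorporate the universal norms result; it is the genuinely nontrivial, specifically non-ordinary ingredient your sketch is missing.
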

\begin{proof}
	By our hypotheses, the representation $\MM_p$ does not contain a
	non-trivial sub-$G_F$-representation with stricly positive Hodge-Tate
	weights at each prime $v$ of $F$ dividing $p$ and $(\MM_p)^{G_{F_{v,\infty}}}=0$. Hence, by
	\cite[Corollaire 0.5]{PRNormes}, we have 
	\begin{equation}\label{eq:PRNormes}	
	\coh^1_f(F_{v,\infty},M^*) = \coh^1(F_{v,\infty},M^*). 
	\end{equation}
	
	Thus, equation~\eqref{eq:PRNormes} combined with Proposition~\ref{prop:surjection} gives the commutative diagram
	\begin{center}
		\begin{tikzcd}	
			0 \arrow{r} & \sel_{I}(M^*/F_\infty)
			\arrow{r} \arrow{d} & \coh^1(F_\Sigma/F_\infty,M^*)
			\arrow{r} \arrow{d}{=} &
			\PP_{\Sigma,\underline{I}}(M^*/F_\infty)
			\arrow{r} \arrow{d} & 0 \\
			0 \arrow{r} & \sel_{\mathrm{BK}}(M^*/F_\infty)
			\arrow{r} & \coh^1(F_\Sigma/F_\infty,M^*)
			\arrow{r} &
			\prod_{w \in \Sigma,w\nmid p}
			\frac{\coh^1(F_{\infty,w},M^*)}{\coh^1_\mathrm{unr}(F_{\infty,w},M^*)},
		\end{tikzcd}
	\end{center}
	which induces, by the snake Lemma, the short exact sequence
	\[
		0 \rightarrow \sel_{\underline{I}}(M^*/F_\infty) \rightarrow
		\sel_{\mathrm{BK}}(M^*/F_\infty) \rightarrow
		\prod_{v \mid p}
		\frac{\coh^1(F_{v,\infty},M^*)}{\coh^1_{I_v}(F_{v,\infty},M^*)}
		\rightarrow 0.
	\]
	The Corollary follows from the hypothesis that
	$\sel_{\underline{I}}(M^*/F_\infty)$ is cotorsion and (the
	proof of) Lemma~\ref{lemm:corank}.
\end{proof}

\begin{prop} \label{prop:submodule}
	Assume that
	$\sel_{\underline{I}}(M^*/F_\infty)$ and
	$\sel_{\underline{I}^c}(M/F_\infty)$ are cotorsion
	$\Zp[[\Gamma]]$-modules. Then, for all $s\in
	\Z$, $\coh^1(F_\Sigma/F_\infty,M_s^*)$
	has no proper sub-$\Zp[[\Gamma]]$-modules of finite index.
\end{prop}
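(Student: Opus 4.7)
The plan is to reformulate the statement via Pontryagin duality and reduce it to the weak Leopoldt conjecture for $M^*_s$, which in turn follows from the two cotorsion hypotheses.

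\emph{Pontryagin duality.} Let $X_s$ denote the Pontryagin dual of $\coh^1(F_\Sigma/F_\infty, M^*_s)$, which is a finitely generated $\Zp[[\Gamma]]$-module (see the remark following Conjecture~\ref{conj:cotorsion}). The conclusion is equivalent to showing that $X_s$ contains no nonzero finite $\Zp[[\Gamma]]$-submodule; since $\Zp[[\Gamma]]$ is a two-dimensional regular local ring, finite submodules coincide with pseudo-null ones. A classical consequence of Jannsen's spectral sequence, essentially Greenberg's Proposition~3 of \cite{GreenbergIwasawaRep}, shows that $X_s$ has no nonzero pseudo-null $\Zp[[\Gamma]]$-submodule whenever $\coh^0(F_\infty, M^*_s) = 0$ and the weak Leopoldt conjecture $\coh^2(F_\Sigma/F_\infty, M^*_s) = 0$ holds. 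The former vanishing follows from \textbf{(Tors.)} by the inflation-restriction argument already used in Section~\ref{subsec:selmer}, so everything reduces to establishing weak Leopoldt.

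\emph{Deriving weak Leopoldt.} Twisting by $\chi^s_{| \Gamma}$ reduces the problem to the case $s = 0$. I would combine Proposition~\ref{prop:surjection} and Lemma~\ref{lemm:corank} (applied both to $M^*$ with $\underline{I}$ and to $M$ with $\underline{I}^c$, using the two cotorsion hypotheses symmetrically) to pin down the $\Zp[[\Gamma]]$-coranks of $\coh^1(F_\Sigma/F_\infty, M^*)$ and $\coh^1(F_\Sigma/F_\infty, M)$ as $g_+$ and $g_-$ respectively. Feeding these into the global $\Zp[[\Gamma]]$-Euler-Poincaré characteristic formula over $F_\infty/F$ forces the $\Zp[[\Gamma]]$-corank of $\coh^2(F_\Sigma/F_\infty, M^*)$ to vanish. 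The upgrade from corank zero to outright vanishing comes from Poitou-Tate global duality, which realizes the Pontryagin dual of $\coh^2(F_\Sigma/F_\infty, M^*)$ as a subquotient of the Iwasawa cohomology $\hiw(F_v, T)$, a torsion-free $\Zp[[\Gamma]]$-module by \textbf{(Tors.)}; torsion-freeness together with cotorsion then yields the vanishing.

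The main obstacle I anticipate is the derivation of weak Leopoldt in the second paragraph: it requires a careful matching of the global Euler-Poincaré formula with the two local corank computations, and then invoking Poitou-Tate to pass from cotorsion to outright vanishing. The orthogonality of $\ker \col_{T, I_v}$ and $\ker \col_{T^*, I_v^c}$ furnished by Lemma~\ref{lemm:KerOrthoComplement} is essential for exploiting the symmetry between the two cotorsion hypotheses; once weak Leopoldt is in hand, the reduction in the first paragraph finishes the argument formally.
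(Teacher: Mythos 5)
Your proposal follows essentially the same skeleton as the paper's proof: reduce to $s=0$ by twisting, combine Proposition~\ref{prop:surjection} (which uses the cotorsionness of $\sel_{\underline{I}^c}(M/F_\infty)$) with the cotorsionness of $\sel_{\underline{I}}(M^*/F_\infty)$ and Lemma~\ref{lemm:corank} to get $\corank_{\Zp[[\Gamma]]}\coh^1(F_\Sigma/F_\infty,M^*)=g_+$, feed this into the global Euler--Poincar\'e formula to see that $\coh^2(F_\Sigma/F_\infty,M^*)$ is cotorsion, upgrade this to $\coh^2(F_\Sigma/F_\infty,M^*)=0$, and conclude via the standard ``weak Leopoldt implies no proper finite-index submodules'' statement (this is \cite[Proposition 5]{GreenbergIwasawaRep}, not Proposition 3, which is the Euler characteristic formula -- a minor citation slip; also, only the corank of $\coh^1(F_\Sigma/F_\infty,M^*)$ enters the formula, so the symmetric computation for $M$ with $\underline{I}^c$ is not needed). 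The single point where you genuinely diverge is the upgrade from ``cotorsion'' to ``zero'': the paper quotes \cite[Proposition 4]{GreenbergIwasawaRep}, which says $\coh^2(F_\Sigma/F_\infty,M^*)$ is a \emph{cofree} $\Zp[[\Gamma]]$-module, so cotorsion forces vanishing outright; you propose a Poitou--Tate duality argument instead.

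As written, that Poitou--Tate step is imprecise in a way that matters. The Pontryagin dual of $\coh^2(F_\Sigma/F_\infty,M^*)$ is not naturally a subquotient of a single local module $\hiw(F_v,T)$; in the Iwasawa-theoretic Poitou--Tate (compactly supported cohomology) sequence it sits between $\bigoplus_{v\in\Sigma}\varprojlim_n\coh^0(F_{v,n},T)$ and the \emph{global} module $\varprojlim_n\coh^1(F_\Sigma/F_n,T)$. Moreover, ``torsion subquotient of a torsion-free module'' does not force vanishing -- quotients of torsion-free modules can have torsion -- so you need an honest embedding, not a subquotient. Both defects are repairable: under \textbf{(Tors.)} one has $\coh^0(F_{v,\infty},T)=0$, so the local inverse limits of $\coh^0$ vanish and the dual of $\coh^2(F_\Sigma/F_\infty,M^*)$ embeds into $\varprojlim_n\coh^1(F_\Sigma/F_n,T)$, whose $\Zp[[\Gamma]]$-torsion submodule is $\coh^0(F_\infty,T)=0$; being torsion (corank zero) and embedded in a torsion-free module, it vanishes. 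With these corrections your route is a valid alternative, at the cost of invoking the Poitou--Tate machinery where the paper's appeal to cofreeness of $\coh^2$ does the job in one line.
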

\begin{proof}
	Since $\coh^1(F_\Sigma/F_\infty,M_s^*)=\coh^1(F_\Sigma/F_\infty,M^*)\otimes \chi_{|\Gamma}^s$, it is enough to prove the proposition for $\coh^1(F_\Sigma/F_\infty,M^*)$.
	By Proposition~\ref{prop:surjection}, we have the short exact
	sequence
	
	\[
		0 \rightarrow \sel_{\underline{I}}(M^*/F_\infty)
		\rightarrow \coh^1(F_\Sigma/F_\infty,M^*) \rightarrow
		\PP_{\Sigma,\underline{I}}(M^*/F_\infty) \rightarrow
		0.
	\]
	We assume that $\sel_{\underline{I}}(M^*/F_\infty)$ is a cotorsion
	$\Zp[[\Gamma]]$-module.
	The above short exact sequence then forces the $\Zp[[\Gamma]]$-coranks of
	$\coh^1(F_\Sigma/F_\infty,M^*)$ and
	$\PP_{\Sigma,\underline{I}}(M^*/F_\infty)$ to be equal. Thus,
	by Lemma~\ref{lemm:corank}, we have
	\[
		\corank_{\Zp[[\Gamma]]} \coh^1(F_\Sigma/F_\infty,M^*)
		= g_+.
	\]
	On the other hand, from the global Euler-Poincar\'e characteristic formula
	\cite[Proposition 3]{GreenbergIwasawaRep}, we have
	\[
		\corank_{\Zp[[\Gamma]]} \coh^1(F_\Sigma/F_\infty,M^*)
		= \corank_{\Zp[[\Gamma]]}
		\coh^2(F_\Sigma/F_\infty,M^*) + \delta(F,\MM_p^*),
	\]
	with
	\[
		\delta(F,\MM_p^*) = \sum_{v \text{ complex}} \dim_{\Qp}
		\MM_p^* + \sum_{v \text{ real}} \dim_{\Qp}(\MM_p^*)^-
	\]
	where $v$ runs through archimedean primes of $F$ and, for a real
	prime $v$, and
	$\dim_{\Qp}(\MM_p^*)^-$ is the dimension of the $-1$-eigenspace
	for a complex conjugation above $v$ acting on $\MM_p$.
	From \cite[Eq. (34)]{GreenbergIwasawaRep}, we have
	\[
		\delta(F,\MM_p^*) = \dim_{\Qp}(\ind_F^\Q \MM_p^*)^- =
		g_+.
	\]
	Thus, $\coh^2(F_\Sigma/F_\infty,M^*)$ is a cotorsion
	$\Zp[[\Gamma]]$-module. 
	But by \cite[Proposition 4]{GreenbergIwasawaRep}, $\coh^2(F_\Sigma/F_\infty,M^*)$ is a cofree $\Zp[[\Gamma]]$-module, hence $\coh^2(F_\Sigma/F_\infty,M^*)=0$ and the proposition follows from \cite[Proposition 5]{GreenbergIwasawaRep}.
\end{proof}

\begin{rema}
	The weak Leopoldt conjecture~\cite[\S 1.3 and Appendix B]{PRLivre} would also imply that $\coh^1(F_\Sigma/F_\infty,M^*)$ has no proper sub-$\Zp[[\Gamma]]$-modules of finite index.
	Indeed, by the weak Leopoldt conjecture, $\coh^2(F_\Sigma/F_\infty,M^*)$ is trivial and we can apply \cite[Proposition 5]{GreenbergIwasawaRep}.
\end{rema}

\begin{proof}[Proof of Theorem~\ref{theo:submodules}]
	For any $s \in \Z$, since $\Gamma \simeq \Zp$ has
	$p$-cohomological dimension $1$, the restriction map
	\[
		\coh^1(F_\Sigma/F,M^*_s) \rightarrow
		\coh^1(F_\Sigma/F_\infty,M^*_s)^\Gamma
	\]
	is surjective. Thus, combined with Proposition~\ref{prop:surjection} and Lemma~\ref{lemm:surjection}, for all
	but finitely many $s \in \Z$, we obtain the commutative diagram
	\begin{center}
		\begin{tikzcd}
			\coh^1(F_\Sigma/F,M^*_s) \arrow{r} \arrow{d} &
			\coh^1(F_\Sigma/F_\infty,M^*_s)^\Gamma \arrow{r}
			\arrow{d} & 0 \\
			\PP_{\Sigma,\underline{I}}(M^*_s/F) \arrow{r}
			\arrow{d} &
			\PP_{\Sigma,\underline{I}}(M^*_s/F_\infty)^\Gamma
			\arrow{r} & 0 \\
			0 & & 
		\end{tikzcd}
	\end{center}
	which implies that 
	\[
		\coh^1(F_\Sigma/F_\infty,M^*_s)^\Gamma \rightarrow \PP_{\Sigma,\underline{I}}(M^*_s/F_\infty)^\Gamma
	\]
	is surjective.

	By Proposition~\ref{prop:surjection}, we have the short exact
	sequence
	\[
		0 \rightarrow \sel_{\underline{I}}(M^*_s/F_\infty)
		\rightarrow \coh^1(F_\Sigma/F_\infty,M^*_s) \rightarrow
		\PP_{\Sigma,\underline{I}}(M^*_s/F_\infty) \rightarrow
		0.
	\]
	Taking $\Gamma$-invariants gives the long exact sequence
	\[
		\coh^1(F_\Sigma/F_\infty,M^*_s)^\Gamma \rightarrow
		\PP_{\Sigma,\underline{I}}(M^*_s/F_\infty)^\Gamma
		\rightarrow \sel_{\underline{I}}(M^*_s/F_\infty)_\Gamma
		\rightarrow \coh^1(F_\Sigma/F_\infty,M^*_s)_\Gamma.
	\]
	Since the first map is surjective, $\sel_{\underline{I}}(M^*_s/F_\infty)_\Gamma
		\rightarrow \coh^1(F_\Sigma/F_\infty,M^*_s)_\Gamma$ is
		injective. Furthermore, $\coh^1(F_\Sigma/F_\infty,M^*_s)_\Gamma$
		is trivial by Proposition~\ref{prop:submodule}. 
	Thus, $\sel_{\underline{I}}(M^*_s/F_\infty)_\Gamma=0$ which implies the result.
\end{proof}

\subsection{An application: computation of the Euler-Poincar\'e characteristic}
We now choose a strongly admissible basis as in Lemma~\ref{lemm:BKSigned} and any
$\underline{I} \in \II_p$.

For $v$ a non-archimedean prime not dividing $p$, we recall the
definition of the Tamagawa number of $T$ at $v$ \cite[I \S
4]{FontainePerrinRiou}. 

If $N$ is $\Qp$-vector space of finite dimension $d$ (respectively a
free $\Zp$-module of rank $d$), we denote by $N^{-1}$ its linear dual and we
set $\det_{\Qp} N = \wedge_{\Qp}^d N$ (respectively $\det_{\Zp} N =
\wedge_{\Zp}^d N$). If $N$ is now a finitely generated $\Zp$-module, we
define the determinant of $N$ over $\Zp$ as
\[
	\det_{\Zp} N = (\det_{\Zp} N_{-1})^{-1} \otimes \det_{\Zp} N_0,
\]
where
\[
	0 \rightarrow N_{-1} \rightarrow N_0 \rightarrow N \rightarrow 0
\]
is a resolution of $N$ by free $\Zp$-modules of finite ranks $N_{-1}$
and $N_0$.  

We recall that we denote by $\MM_p$ the $p$-adic realization of our fixed motive $\MM$.
Let $\frob_v$ be the
geometric Frobenius in $\gal(F_{v,\mathrm{unr}}/F_v)$, we have an exact sequence
of $\Qp$-vector spaces
\[
	0 \rightarrow \coh^0(F_v,\MM_p) \rightarrow
	\coh^0(F_{v,\mathrm{unr}},\MM_p)
	\xrightarrow{1 - \frob_v} \coh^0(F_{v,\mathrm{unr}},\MM_p) \rightarrow
	\coh^1_{\mathrm{unr}}(F_v,\MM_p) \rightarrow 0
\]
which induces an isomorphism of $\Qp$-vector spaces
\[
	\iota_{\MM_p,v} : \det_{\Qp}\coh^0(F_v,\MM_p) \otimes
	(\det_{\Qp}\coh^1_{\mathrm{unr}}(F_v,\MM_p))^{-1} \rightarrow \Qp.
\]
Then 
\[
	\det_{\Zp}\coh^0(F_v,T) \otimes
	(\det_{\Zp}\coh^1_{\mathrm{unr}}(F_v,T))^{-1}
\]
is a $\Zp$-lattice inside $\det_{\Qp}\coh^0(F_v,\MM_p) \otimes
(\det_{\Qp}\coh^1_{\mathrm{unr}}(F_v,\MM_p))^{-1}$ and the Tamagawa number of $T$ at $v$,
denoted by $\tam_v(T)$, is defined as the unique power of $p$ such that
\[
	\iota_{\MM_p,v}( \det_{\Zp}\coh^0(F_v,T) \otimes
	(\det_{\Zp}\coh^1_{\mathrm{unr}}(F_v,T))^{-1} ) = \Zp \cdot \tam_v(T).
\]

We can now deduce the following corollary on the leading term of the
algebraic $p$-adic $L$-function, which is a generalization of Kim's
result on Kobayashi's plus/minus Selmer groups \cite[Theorem
1.2]{KimSubmodules}.
\begin{coro} \label{coro:EPchar}
	Assume that
	$\sel_{\mathrm{BK}}(M^*/F)$ is finite, and that
	$\sel_{\underline{I}^c}(M/F_\infty)$ is a cotorsion
	$\Zp[[\Gamma]]$-module. Denote by $(f_{\underline{I}}) \subset
	\Zp[[X]]$ the characteristic ideal of
	$\sel_{\underline{I}}(M^*/F_\infty)$.
	Then, up to a unit,
	\begin{equation} \label{eq:EPcharLemm}
		f_{\underline{I}}(0) = |\sel_{\mathrm{BK}}(M^*/F)| \cdot
		\prod_{v \nmid p} \tam_v(T).
	\end{equation}
\end{coro}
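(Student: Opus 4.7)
The plan is to mirror Greenberg's Euler--Poincar\'e characteristic computation from \cite[Theorem 4.1]{GreenbergIwasawaEll}. Write $S = \sel_{\underline{I}}(M^*/F_\infty)$ and let $X = S^\vee$ be its Pontryagin dual. The goal is first to show that $S$ is cotorsion with $S_\Gamma = 0$, so that $f_{\underline{I}}(0) \neq 0$ and $f_{\underline{I}}(0)$ equals $|S^\Gamma|$ up to a $p$-adic unit, and then to identify $|S^\Gamma|$ with the right-hand side of \eqref{eq:EPcharLemm} via a control diagram.

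By Lemma~\ref{lemm:BKSigned}, the strongly admissible basis gives $\sel_{\underline{I}}(M^*/F) = \sel_{\mathrm{BK}}(M^*/F)$, which is finite by hypothesis. A control-theorem argument analogous to Lemma~\ref{lemm:control} (but applied to $M^*$ rather than $M$) at $s=0$ forces $S^\Gamma$ to be finite, and therefore $S$ is cotorsion: its dual $X$ has no free summand since $X_\Gamma = (S^\Gamma)^\vee$ is finite. Combined with the hypothesis on $\sel_{\underline{I}^c}(M/F_\infty)$, Theorem~\ref{theo:submodules} then yields $S_\Gamma = 0$, i.e.\ $X^\Gamma = 0$. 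The standard Euler--Poincar\'e formula for torsion $\Zp[[\Gamma]]$-modules gives, up to a unit,
\[
f_{\underline{I}}(0) \;=\; |X_\Gamma|/|X^\Gamma| \;=\; |S^\Gamma|.
\]

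Next I would invoke Proposition~\ref{prop:surjection} at $s=0$ (its applicability at this specific value following from the finiteness of $\sel_{\mathrm{BK}}(M^*/F)$, propagated through all $n$ by the control theorem) to produce a commutative diagram with exact top row
\[
\begin{tikzcd}[column sep=small]
0 \arrow{r} & \sel_{\underline{I}}(M^*/F) \arrow{r}\arrow{d}{\alpha} & \coh^1(F_\Sigma/F,M^*) \arrow{r}\arrow{d}{\beta} & \PP_{\Sigma,\underline{I}}(M^*/F) \arrow{r}\arrow{d}{\gamma} & 0 \\
0 \arrow{r} & S^\Gamma \arrow{r} & \coh^1(F_\Sigma/F_\infty,M^*)^\Gamma \arrow{r} & \PP_{\Sigma,\underline{I}}(M^*/F_\infty)^\Gamma.
\end{tikzcd}
\]
Inflation--restriction together with \textbf{(Tors.)} makes $\beta$ an isomorphism, and the snake lemma then produces a short exact sequence
\[
0 \to \sel_{\mathrm{BK}}(M^*/F) \to S^\Gamma \to \ker(\gamma) \to 0,
\]
so it suffices to compute $|\ker(\gamma)|$ as a product of local factors $|\ker(\gamma_w)|$. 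For $v \mid p$, the snake lemma applied to the diagram \eqref{diag:controlp} combined with the definitional identity $\coh^1_{I_v}(F_v,M^*) = \coh^1_{I_v}(F_{v,\infty},M^*)^\Gamma$ shows that $\gamma_v$ is injective, so there is no contribution at primes above $p$. For $w \nmid p$ in $\Sigma$, a classical local computation using that $\hur(F_{\infty,w},M^*) = 0$ and the exact sequence defining the Tamagawa number (recalled in the paragraphs above the statement) identifies $|\ker(\gamma_w)|$ with $\tam_w(T)$.

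The main obstacle is the explicit identification of $|\ker(\gamma_w)|$ with $\tam_w(T)$ at primes $w \nmid p$: this requires careful bookkeeping of the kernel/cokernel appearing in the diagram chase and its comparison with the determinant-theoretic definition of the Tamagawa number via the $1 - \frob_w$ exact sequence on $\coh^0(F_{w,\mathrm{unr}},\MM_p)$. A secondary, more minor, technical point is verifying the applicability of Proposition~\ref{prop:surjection} at the single value $s = 0$ (rather than only at generic $s$); this is precisely where the hypothesis that $\sel_{\mathrm{BK}}(M^*/F)$ is finite enters essentially.
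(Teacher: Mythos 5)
Your argument is essentially the paper's proof: cotorsionness via Lemma~\ref{lemm:BKSigned} and the control theorem, then $f_{\underline{I}}(0)=|\sel_{\underline{I}}(M^*/F_\infty)^\Gamma|$ via \cite[Lemma 4.2]{GreenbergIwasawaEll} and Theorem~\ref{theo:submodules}, then the same control diagram, snake lemma, and local kernel analysis (trivial at archimedean places and at $v\mid p$). The one step you single out as the main obstacle, namely $|\ker(\gamma_w)|=\tam_w(T)$ for non-archimedean $w\nmid p$, is exactly the point the paper settles by quoting Perrin-Riou (\cite[Lemme 2.2.5]{PRHauteurs}, which gives $|\coker(\hiw(F_w,T)\to\hur(F_w,T))|=\tam_w(T)$) combined with Tate local duality, and your secondary concern about applying the surjectivity statement at $s=0$ over $F$ is dealt with in the paper simply by invoking Proposition~\ref{prop:surjection} for the top row of the diagram.
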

\begin{proof}
	First, we remark that the hypothesis implies that
	$\sel_{\underline{I}}(M^*/F_\infty)$ is a cotorsion
	$\Zp[[\Gamma]]$-module since, by Lemma~\ref{lemm:BKSigned},
	$\sel_{\mathrm{BK}}(M^*/F) = \sel_{\underline{I}}(M^*/F)$, and
	we have a ``control theorem'' (Lemma~\ref{lemm:control}).  Thus
	$\sel_{\underline{I}}(M^*/F_\infty)^\Gamma$ is finite, which
	implies that $\sel_{\underline{I}}(M^*/F_\infty)$ is
	cotorsion.

	Up to a unit, we have
	\begin{align} \label{eq:constantTerm}
		f_{\underline{I}}(0) 	& = |\sel_{\underline{I}}(M^*/F_\infty)^\Gamma|/|\sel_{\underline{I}}(M^*/F_\infty)_\Gamma| \\
			& = |\sel_{\underline{I}}(M^*/F_\infty)^\Gamma|,
			\nonumber
	\end{align}
	where the first relation is \cite[Lemma 4.2]{GreenbergIwasawaEll} and the
	second is Theorem~\ref{theo:submodules}.

	It remains to relate the right hand side of the formula
	\eqref{eq:EPcharLemm} to $|\sel_{\underline{I}}(M^*/F_\infty)^\Gamma|$. It
	is done by studying the commutative diagram
	\begin{center}
		\begin{tikzcd}
			0 \arrow{r} & \sel_{\mathrm{BK}}(M^*/F)
			\arrow{r} \arrow{d} & \coh^1(F_\Sigma/F,M^*)
			\arrow{r} \arrow{d} &
			\PP_{\Sigma,f}(M^*/F) \arrow{d}{(f_v)_v} \arrow{r} & 0 \\
			0 \arrow{r}  &
			\sel_{\underline{I}}(M^*/F_\infty)^{\Gamma}
			\arrow{r} &
			\coh^1(F_\Sigma/F_\infty,M^*)^{\Gamma} \arrow{r} &
			\PP_{\Sigma,\underline{I}}(M^*/F_\infty)^{\Gamma}
			\arrow{r} & 0,
		\end{tikzcd}
	\end{center}
	where the surjection at the end of the top row is
	Proposition~\ref{prop:surjection} and the one at the bottom row
	is due to Theorem~\ref{theo:submodules}.  As we mentioned in the
	proof of Lemma~\ref{lemm:control}, by \textbf{(Tors.)}, the
	central map is an isomorphism by the inflation-restriction exact
	sequence. Hence, by the snake lemma, we have
	\begin{equation} \label{eq:EPchar}
		|\sel_{\underline{I}}(M^*/F_\infty)^\Gamma| =
		|\sel_{\mathrm{BK}}(M^*/F)|.\prod_{v \in \Sigma} |\ker
		f_v|.
	\end{equation}
	We now compute $|\ker f_v|$. As we have already remarked, the
	archimedean part is trivial since $p$ is odd, and if $v$ divides
	$p$, then $f_v$ is injective (see the proof of
	Lemma~\ref{lemm:control}). Finally, if $v$ is a non-archimedean prime
	not dividing $p$, then $\ker f_v$ is the orthogonal complement
	under Tate's local pairing
	of the projection
	\[
		\im (\hiw(F_v,T) \xrightarrow{f_v^*} \hur(F_v,T)).
	\]
	By \cite[Lemme 2.2.5]{PRHauteurs}, we have $|\coker f_v^*|=\tam_v(T)$.
	Thus, $|\ker f_v| = \tam_v(T)$. Since $\tam_v(T) = 1$ at primes
	$v$ where $M^*$ is unramified and all the ramified primes of
	$M^*$ are contained in $\Sigma$, we can extend the product in
	\eqref{eq:EPchar} over
	all nonarchimedean primes not dividing $p$. The corollary
	follows from \eqref{eq:constantTerm} combined with
	\eqref{eq:EPchar}.
\end{proof}

\section{Congruences}
Let $\MM^\prime$ be another motive and $T^\prime$ a $G_F$-stable
$\Zp$-lattice inside $\MM_p^\prime$ the $p$-adic realization of
$\MM^\prime$ satisfying all the hypotheses \textbf{(H.-T.)},
\textbf{(Cryst.)}, \textbf{(Tors.)}, \textbf{(Fil.)} and
\textbf{(Slopes)}.  We shall simply add a superscript $(\cdot)^\prime$
to the various object associated to $T$ to denote the similar object
associated to $T^\prime$ (\textit{e.g.} $M^\prime = T^\prime \otimes
\Qp/\Zp$).

From now on, we assume that
\begin{equation}
	T/pT \simeq T^\prime/pT^\prime,\tag*{\textbf{(Cong.)}}
\end{equation}
as $G_F$-representations.
The goal of this section is to compare
the signed Selmer groups of $M^*$ and $M^{\prime,*}$ under the hypothesis
\textbf{(Cong.)}. 
We begin by studying the implication of this congruence on the signed
Coleman maps.

\subsection{Wach modules}
We succinctly recall what we need for our purpose and refer the reader
to \cite{BergerExp,BergerLimites} for details. Let $v$ be a prime of $F$
dividing $p$ and let $\A$ be the ring $\OO_{F_v}[[\pi]]$ equipped with
the semilinear action by the Frobenius $\phi$ which acts as the absolute
Frobenius on $\OO_{F_v}$ and on $\pi$ by
\[
	\phi(\pi) = (\pi + 1)^p - 1,
\]
and with an action of $\gal(F_v(\mu_{p^\infty})/F_v)$ given by 
\[
	g(\pi) = (\pi + 1)^{\chi(g)} - 1, \forall g \in
	\gal(F_v(\mu_{p^\infty})/F_v).
\]
There exists a Wach module $\N_v(T)$ attached to $T|_{G_{F_v}}$ (and
similarly to $T'|_{G_{F_v}}$), which is a free $\A$-module of rank
$\dim_{\Qp}(\MM_p)$ equipped with an action of
$\gal(F_v(\mu_{p^\infty})/F_v)$ and a $\phi$-linear endomorphism of
$\N_v(T)[\frac{1}{\pi}]$, which we still denote by $\phi$, commuting
with the Galois action. We denote by $\phi^* \N_v(T)$ the $\A$-module
generated by $\phi(\N_v(T))$.

The Dieudonn\'e module associated to $T$ is defined via the Wach module
by
\begin{equation} \label{eq:WachDieudonne}
	\N_v(T)/\pi\N_v(T) = \Dcrisv(T),
\end{equation}
where the filtration on $\N_v(T)$ inducing the one on $\Dcrisv(T)$ is 
\[
	\fil^i \N_v(T) = \{x \in \N_v(T), \phi(x) \in
	(\phi(\pi)/\pi)^i \N_v(T)\},
\]
One also recovers the first Iwasawa cohomology group from the Wach
module by an isomorphism of
$\Zp[[\gal(F_v(\mu_{p^\infty})/F_v)]]$-modules
\begin{equation} \label{eq:WachIwasawa}
	h_T^1: \hiw(F_v,T) \xrightarrow{\sim} \N_v(T)^{\psi=1},
\end{equation}
where $\psi$ is a left inverse for $\phi$. 

We compare the Wach module of $T$ and $T^\prime$ modulo
$p$.
Since the Hodge-Tate weights of $T$ and $T^\prime$ are in $[0,1]$, the
following theorem is a special case of \cite[Th\'eor\`eme
IV.1.1]{BergerLimites}.
\begin{theo} \label{theo:Wachmodp}
	The isomorphism \textbf{(Cong.)} induces an isomorphism of $\A$-modules
	\[
		\N_v(T)/p\N_v(T) \simeq \N_v(T^\prime)/p\N_v(T^\prime),
	\]
	which is compatible with the filtration, the Galois action and the action of $\phi$.
\end{theo}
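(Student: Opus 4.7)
The plan is to verify that our setup falls within the scope of Berger's classification and then invoke \cite[Théorème IV.1.1]{BergerLimites} directly. Berger's theorem asserts, roughly, that for crystalline $G_{F_v}$-representations whose Hodge-Tate weights lie in a common bounded interval, the Wach module modulo $p^n$ is functorially determined by the Galois module modulo $p^n$, together with its filtration, Frobenius, and $\gal(F_v(\mu_{p^\infty})/F_v)$-action.

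First I would check the hypotheses. By \textbf{(Cryst.)} and \textbf{(H.-T.)}, the representations $T|_{G_{F_v}}$ and $T^\prime|_{G_{F_v}}$ are both crystalline with Hodge-Tate weights in $[0,1]$; this is precisely the range for which Berger's theorem applies with a uniform interval of weights. The fact that $F_v/\Qp$ is unramified (since $p$ is unramified in $F$) ensures that the base ring $\A = \OO_{F_v}[[\pi]]$ used in the construction of $\N_v$ is the one to which Berger's results apply.

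Second, specializing Berger's result to $n=1$, the $G_{F_v}$-equivariant isomorphism $T/pT \simeq T^\prime/pT^\prime$ from \textbf{(Cong.)} lifts uniquely to an $\A$-linear isomorphism
\[
\N_v(T)/p\N_v(T) \xrightarrow{\sim} \N_v(T^\prime)/p\N_v(T^\prime)
\]
compatible with $\phi$ and with the action of $\gal(F_v(\mu_{p^\infty})/F_v)$. Compatibility with the filtration is automatic from the characterization
\[
\fil^i \N_v(T) = \{x \in \N_v(T) : \phi(x) \in (\phi(\pi)/\pi)^i \N_v(T)\},
\]
which is preserved by any isomorphism intertwining $\phi$ and reducing compatibly modulo $p$.

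There is no real obstacle in our argument since the substantive work—showing that the Wach module construction is functorial on reductions modulo $p^n$ when the Hodge-Tate weights are confined to a bounded interval—is carried out in \cite{BergerLimites}. The only task on our side is to confirm that the hypotheses \textbf{(Cryst.)}, \textbf{(H.-T.)} and the unramifiedness of $F_v/\Qp$ place us inside the framework of \emph{loc.\ cit.}, which they do.
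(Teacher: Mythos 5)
Your proposal matches the paper's argument: the paper also deduces the theorem directly from Berger's Théorème IV.1.1, noting only that \textbf{(Cryst.)} and \textbf{(H.-T.)} (Hodge--Tate weights in $[0,1]$, over the unramified base $F_v$) place the situation within the scope of that result. The extra remarks on filtration compatibility are consistent with Berger's statement, so no further work is needed.
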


\subsection{Congruences of signed Coleman maps}
\label{subsec:congruencesColeman}
We now follow the construction of the signed Coleman maps as given in
\cite[\S 2]{BLIntegral} keeping track of the congruences modulo $p$.

First, note that by \eqref{eq:WachIwasawa} and
Theorem~\ref{theo:Wachmodp}, we have a
$\Zp[[\gal(F_v(\mu_{p^\infty})/F_v)]]$-isomorphism
\begin{equation}\label{eq:isohiw} \hiw(F_v,T)/p \simeq
\hiw(F_v,T^\prime)/p.  \end{equation} Also, combining
\eqref{eq:WachDieudonne} and Theorem~\ref{theo:Wachmodp}, the
Dieudonn\'e modules associated to $T$ and $T^\prime$ are isomorphic
modulo $p$. We fix good bases for $\Dcrisv(T)$ and $\Dcrisv(T^\prime)$
compatible with the isomorphism given in Theorem~\ref{theo:Wachmodp} in
the sense that they have the same image under \eqref{eq:WachDieudonne}.

\begin{lemm} \label{lemm:modpLn}
	For $n \geqslant 1$, there exists a unique
	$\Lambda$-homomorphism
	\[
		\LL^{(n)}_T : \hiw(F_v,T) \rightarrow \Lambda_n \otimes_{\Zp} \Dcrisv(T)
	\]
	such that 
	\[
		\phi^{-n-1}\circ \LL_T \equiv \LL^{(n)}_T \mod
		\omega_n.
	\]
	Furthermore, the applications $\LL^{(n)}_T$ and
	$\LL^{(n)}_{T^\prime}$ are congruent modulo $p$, \textit{i.e.} the diagram
	\begin{center}
	\begin{tikzcd}
		\hiw(F_v,T)/p \arrow{r}{\LL^{(n)}_T \mod p}
		\arrow{d}{\simeq} & \Lambda_n
		\otimes_{\Zp} \Dcrisv(T)/p \arrow{d}{\simeq}\\
		\hiw(F_v,T^\prime)/p \arrow{r}{\LL^{(n)}_{T^\prime} \mod p} & \Lambda_n
		\otimes_{\Zp} \Dcrisv(T^\prime)/p 
	\end{tikzcd}
	\end{center}
	is commutative.
\end{lemm}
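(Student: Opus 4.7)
The plan is to build $\mathcal{L}^{(n)}_T$ directly from Berger's construction of Perrin-Riou's big logarithm via the Wach module, and then to use Theorem~\ref{theo:Wachmodp} to transport this construction from $T$ to $T^\prime$ modulo $p$.

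For existence and uniqueness of $\mathcal{L}^{(n)}_T$, I would recall Berger's Wach-module description of $\mathcal{L}_T$: the composition of $h^1_T$ with $(1-\phi)$ sends $\hiw(F_v,T)$ into $(\phi^*\N_v(T))^{\psi=0}$, which Mellin theory identifies with a specific $\Lambda$-lattice inside $\mathcal{H}\otimes \Dcrisv(T)$; under this identification the composition is precisely $\mathcal{L}_T$. The operator $\phi^{-n-1}\circ\mathcal{L}_T$ absorbs the Frobenius shift needed to cancel the denominators produced by the slopes of $\phi$ (which lie in $\,]\!-\!1,0[\,$ by \textbf{(Slopes)}), so that its reduction modulo $\omega_n$ is forced to land in $\Lambda_n\otimes \Dcrisv(T)$. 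This is also the content of the decomposition~\eqref{eq:decompoRegulator}: the matrix $\phi^{-n-1}M_v$ reduces modulo $\omega_n$ to the integral matrix $C_{v,n}\cdots C_{v,1}$, and applying this to the signed Coleman maps produces $\mathcal{L}^{(n)}_T$. Uniqueness is immediate: any two candidates differ by a $\Lambda$-homomorphism into $\Lambda_n\otimes \Dcrisv(T)$ that is divisible by $\omega_n$, and hence vanishes.

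For the congruence modulo $p$, Theorem~\ref{theo:Wachmodp} provides an isomorphism $\N_v(T)/p\simeq \N_v(T^\prime)/p$ respecting $\phi$, the $\Gamma$-action, and the filtration. Via~\eqref{eq:WachIwasawa} this induces the isomorphism~\eqref{eq:isohiw} on Iwasawa cohomology modulo $p$; via~\eqref{eq:WachDieudonne} it induces the identification $\Dcrisv(T)/p\simeq\Dcrisv(T^\prime)/p$, which matches our chosen bases by construction. Since every ingredient of the construction of $\mathcal{L}^{(n)}_T$ above --- the isomorphism $h^1_T$, the operator $1-\phi$, the Mellin identification, and the reduction modulo $\omega_n$ --- is intrinsic to the Wach module together with its $\phi$- and $\Gamma$-structures, the claimed diagram commutes modulo $p$ by functoriality.

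The main obstacle is justifying rigorously that $\mathcal{L}^{(n)}_T\bmod p$ really arises from $\N_v(T)/p$ alone, i.e., that in~\eqref{eq:decompoRegulator} both the matrix $\phi^{-n-1}M_v\bmod\omega_n$ and the signed Coleman maps modulo $p$ are computable from the Wach module modulo $p$. For the matrix this is transparent, since $C_v$, $C_{\phi,v}$ and the cyclotomic polynomials entering $M_{v,n}$ are read off from the Wach module in the chosen basis; for the Coleman maps, one follows their construction in \cite{BLIntegral} keeping Theorem~\ref{theo:Wachmodp} in mind. Once this compatibility of the building blocks is recorded, the commutativity of the diagram is a formal consequence.
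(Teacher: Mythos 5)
Your proposal is correct and follows essentially the same route as the paper: the existence and uniqueness of $\LL^{(n)}_T$ is exactly the content of Proposition 2.9 of \cite{BLIntegral} (resting on the integrality of $\phi^{-n-1}\circ(1-\phi)$ on $\N_v(T)^{\psi=1}$ modulo $\phi^{n+1}(\pi)$, with $(\omega_n)$ corresponding to $(\phi^{n+1}(\pi))$ under the Mellin transform), and the mod-$p$ congruence follows because every ingredient of $(\mathfrak{M}\otimes 1)^{-1}\circ(1-\phi)\circ h_T^{-1}$ is read off the Wach module, which Theorem~\ref{theo:Wachmodp} identifies modulo $p$. Just be sure to derive the congruence directly from this Wach-module description, as you do in your main paragraph, and not from the signed Coleman maps, since their mod-$p$ congruence (Lemma~\ref{lemm:modpColn}) is deduced from the present lemma rather than the other way around.
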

\begin{proof}
	The first statement is Proposition 2.9 of \textit{op. cit.}. We
	follow its proof to prove the second.	
	Perrin-Riou's big logarithm $\LL_T$ is given by 
	\[
		(\mathfrak{M}\otimes 1)^{-1}\circ(1-\phi)\circ(h_T)^{-1}
	\]
	where $\mathfrak{M}$ is the Mellin transform which maps elements
	of $\HH$ to overconvergent power series in $\pi$, whose set we
	denote $\Brig$. The Mellin
	transform preserves integrality and the ideal $(\omega_n)$
	corresponds to $(\phi^{n+1}(\pi))$.

	The first statement then follows from a study (Lemma~3.44 of
	\textit{op. cit.}) of the map
	\begin{equation} \label{eq:phin}
		\phi^{-n-1}\circ (1-\phi) : \N_v(T)^{\psi=1} \rightarrow
		(\phi^*\N_v(T))^{\psi=0} \hookrightarrow
		\Brig\otimes_{\Zp}\Dcrisv(T)
	\end{equation}
	which shows that, for $x \in \N_v(T)^{\psi=1}$, the element
	$\phi^{-n-1}\circ (1-\phi)(x)$ is congruent to an element of
	$(\A)^{\psi=0}\otimes_{\Zp} \Dcrisv(T)$ modulo
	$\phi^{n+1}(\pi)\Brig\otimes_{\Zp}\Dcrisv(T)$. But by
	Theorem~\ref{theo:Wachmodp}, the maps \eqref{eq:phin} for $T$ and $T^\prime$
	agree modulo $p$ and we are done. 
\end{proof}

For $i \in \{1,\dots,g_v\}$, we write $\LL_{T,i}^{(n)}$ for the
composition of $\LL_T^{(n)}$ with the projection on the $i$-th component
of the fixed basis of $\Dcrisv(T)$.  We set $h_n$ (respectively
$h_n^\prime$) the
$\Lambda_n$-endomorphisms on $\oplus_{k=1}^{g_v}\Lambda_n$ given by the
left multiplication by the product $C_{v,n} \cdots C_{v,1}$ (respectively
$C_{v,n}^\prime \cdots C_{v,1}^\prime$).
\begin{lemm} \label{lemm:modpColn}
	For $n \geqslant 1$, there exists a unique
	$\Lambda$-homomorphism
	\[
		\col^{(n)}_T : \hiw(F_v,T) \rightarrow
		\bigoplus_{k=1}^{g_v} \Lambda_n
	\]
	such that 
	\[
		\begin{pmatrix}
			\LL^{(n)}_{T,1}\\
			\vdots\\
			\LL^{(n)}_{T,g_v}
		\end{pmatrix}
		 \equiv C_{v,n} \cdots C_{v,1} \cdot \col^{(n)}_T  \mod
		\ker h_n.
	\]
	Furthermore, we have
	\[
		\col^{(n)}_T \equiv \col^{(n)}_{T^\prime} \mod p.
	\]
\end{lemm}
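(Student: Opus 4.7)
The plan is to follow the construction of $\col^{(n)}_T$ in \cite[\S 2]{BLIntegral} while tracking the congruence modulo $p$ at each step, in direct analogy with the proof of Lemma~\ref{lemm:modpLn}. The existence and uniqueness of $\col^{(n)}_T$ is already established in \textit{op. cit.}: the image of $\LL^{(n)}_T$ lies inside the image of the multiplication map $h_n$, and $\col^{(n)}_T$ is characterized by the factorization displayed in the statement. I would simply recall this.

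For the congruence $\col^{(n)}_T \equiv \col^{(n)}_{T^\prime} \pmod p$, the key observation is that the matrices $h_n$ and $h_n^\prime$ coincide modulo~$p$. Indeed, by construction the Hodge-compatible bases of $\Dcrisv(T)$ and $\Dcrisv(T^\prime)$ were chosen to be compatible with the isomorphism $\Dcrisv(T)/p \simeq \Dcrisv(T^\prime)/p$ induced by Theorem~\ref{theo:Wachmodp}, and this isomorphism is compatible with $\phi$. Hence $C_{\phi,v} \equiv C_{\phi,v}^\prime \pmod p$, so $C_v \equiv C_v^\prime \pmod p$ by \eqref{eq:matriceFrobenius}, and therefore $C_{v,k} \equiv C_{v,k}^\prime \pmod p$ for every $k \geqslant 1$. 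Consequently $h_n \equiv h_n^\prime \pmod p$ as endomorphisms of $\bigoplus_{k=1}^{g_v} \Lambda_n$, and in particular $\ker h_n$ and $\ker h_n^\prime$ have the same image inside $\bigoplus_{k=1}^{g_v}\Lambda_n/p$.

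Combining this with Lemma~\ref{lemm:modpLn}, the tuples $(\LL^{(n)}_{T,i})_i$ and $(\LL^{(n)}_{T^\prime,i})_i$ coincide modulo $p$ under the identifications \eqref{eq:isohiw} and our fixed identification $\Dcrisv(T)/p \simeq \Dcrisv(T^\prime)/p$. Thus the two defining relations
\[
	(\LL^{(n)}_{T,i})_i \equiv h_n \cdot \col^{(n)}_T \pmod{\ker h_n}, \qquad (\LL^{(n)}_{T^\prime,i})_i \equiv h_n^\prime \cdot \col^{(n)}_{T^\prime} \pmod{\ker h_n^\prime}
\]
reduce to the same identity modulo $p$. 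The uniqueness clause then forces $\col^{(n)}_T \equiv \col^{(n)}_{T^\prime} \pmod p$.

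I expect the main obstacle to be verifying that the uniqueness statement survives the reduction modulo $p$; equivalently, that the reduction of $\ker h_n$ modulo $p$ equals $\ker(h_n \bmod p)$. This should follow from the explicit shape of the $C_{v,k}$ (the only nontrivial entries involve $\Phi_{p^k}(1+X)$ and the invertible matrix $C_v^{-1}$), through the same flatness-type argument implicit in the proof of the corresponding assertion in \cite{BLIntegral}, which carries over without change to the mod~$p$ setting.
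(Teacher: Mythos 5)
Your proposal is correct and follows essentially the same route as the paper: existence and uniqueness are quoted from \cite[Proposition 2.10]{BLIntegral}, the congruence $C_{v,k}\equiv C_{v,k}^\prime \bmod p$ is deduced from Theorem~\ref{theo:Wachmodp}, \eqref{eq:WachDieudonne} and the compatible choice of bases, and the conclusion is obtained by combining this with Lemma~\ref{lemm:modpLn} and cancelling the matrices. The final step you flag as the ``main obstacle'' is handled in the paper exactly by the ingredients you name --- the explicit block shape of $C_{v,n}$, the invertibility of $C_v$ over $\Zp$, and the coprimality of $\Phi_{p^n}(1+X)$ and $p$ --- so your argument matches the paper's in both substance and level of detail.
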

\begin{proof}
	The first part is Proposition~2.10 of \textit{op. cit.}.
	Again by Theorem~\ref{theo:Wachmodp} and \eqref{eq:WachDieudonne},
	the matrices $C_{v,n}$ and $C_{v,n}^\prime$ are congruent modulo
	$p$ for all $n$.
	Thus, by the first part of the Lemma and Lemma~\ref{lemm:modpLn}, we have
	\[
		C_{v,n} \cdots C_{v,1} \cdot \col^{(n)}_T \equiv C_{v,n}
		\cdots C_{v,1} \cdot \col^{(n)}_{T^\prime} \mod (\ker
		h_n,p).  
	\]
	Since 
	\[
		C_{v,n} = \left( \begin{array}{c|c}
			I_{d_v} & 0 \\
			\hline
			0 & \Phi_{p^n}(1+X) I_{g_v - d_v}
		\end{array}\right) C_v^{-1},
	\]
	with $C_v \in \gl_{g_v}(\Zp)$ and $\Phi_{p^n}(1+X)$ and $p$ are
	coprime, the second part follows.
\end{proof}

By \cite[Lemma 2.11 and Theorem 2.13]{BLIntegral}, the maps
$(\col_T^{(n)})_{n \geqslant 1}$ are compatible with the natural
projection $\oplus_{k=1}^{g_v}\Lambda_{n+1}/\ker h_{n+1} \rightarrow
\oplus_{k=1}^{g_v}\Lambda_n/\ker h_n$ and thus define a map to
$\varprojlim_n \oplus_{k=1}^{g_v}\Lambda_n/\ker h_n$ which naturally
identifies with $\oplus_{k=1}^{g_v} \Lambda$.  By definition, this map
is $\col_T$. Thus, by Lemma~\ref{lemm:modpColn} we have :

\begin{prop} \label{prop:modplocalp}
	The Coleman maps associated to $T$ and $T^\prime$ are congruent
	modulo $p$. More precisely, if $z\in\hiw(F_v,T) $ and
	$z'\in\hiw(F_v,T') $ have the same image under the isomorphism
	given in \eqref{eq:isohiw}, then
	\[
	\col_T(z)\equiv \col_{T'}(z')\mod p\oplus_{k=1}^{g_v}\Lambda.
	\]
\end{prop}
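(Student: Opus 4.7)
The plan is to deduce the statement directly from the finite-level congruence in Lemma~\ref{lemm:modpColn} by passing to the inverse limit. All the essential computation, namely tracking the Wach-module-level congruence through Perrin-Riou's construction via the intermediate maps $\LL_T^{(n)}$ and $\col_T^{(n)}$, has already been carried out; so the remaining work is purely formal.

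First, I would recall that the quoted result \cite[Lemma 2.11 and Theorem 2.13]{BLIntegral} asserts that $(\col_T^{(n)})_{n\geqslant 1}$ is compatible with the natural projections $\bigoplus_{k=1}^{g_v}\Lambda_{n+1}/\ker h_{n+1} \to \bigoplus_{k=1}^{g_v}\Lambda_n/\ker h_n$, so their inverse limit gives a $\Lambda$-homomorphism with target identified with $\bigoplus_{k=1}^{g_v}\Lambda$, and by definition this limit map is $\col_T$; the same holds for $T'$.

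Next, given $z \in \hiw(F_v,T)$ and $z' \in \hiw(F_v,T')$ having the same image under the isomorphism $\hiw(F_v,T)/p \simeq \hiw(F_v,T')/p$ of \eqref{eq:isohiw}, Lemma~\ref{lemm:modpColn} gives, for each $n \geqslant 1$,
\[
\col_T^{(n)}(z) \equiv \col_{T'}^{(n)}(z') \mod p\bigoplus_{k=1}^{g_v} \Lambda_n,
\]
after using the identification of the Dieudonné modules modulo $p$ provided by Theorem~\ref{theo:Wachmodp} together with the compatibility of the fixed bases. I would then take the inverse limit of this collection of congruences in $n$: since mod-$p$ reduction commutes with projective limits of finitely generated $\Zp$-modules and the transition maps on both sides agree (coming from the same projections $\Lambda_{n+1}/\ker h_{n+1} \to \Lambda_n/\ker h_n$ used on either side), the limit congruence
\[
\col_T(z) \equiv \col_{T'}(z') \mod p\bigoplus_{k=1}^{g_v}\Lambda
\]
follows immediately, which is the claim.

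There is no real obstacle here: the work has been done upstream in Lemma~\ref{lemm:modpLn} (propagating the Wach-module congruence of Theorem~\ref{theo:Wachmodp} through the map $(\mathfrak{M}\otimes 1)^{-1}\circ(1-\phi)\circ(h_T)^{-1}$) and in Lemma~\ref{lemm:modpColn} (inverting the matrix $C_{v,n}\cdots C_{v,1}$, which is legitimate modulo $p$ because $\Phi_{p^n}(1+X)$ is coprime to $p$). The only point that needs attention in writing up is to make sure the isomorphism $\hiw(F_v,T)/p \simeq \hiw(F_v,T')/p$ of \eqref{eq:isohiw} is compatible at every finite level with the identifications used when invoking Lemma~\ref{lemm:modpColn}, which is built into the construction since both come from the single isomorphism $\N_v(T)/p \simeq \N_v(T')/p$ of Theorem~\ref{theo:Wachmodp}.
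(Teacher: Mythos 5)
Your proposal is correct and follows essentially the same route as the paper: the paper also identifies $\col_T$ with the inverse limit of the finite-level maps $\col_T^{(n)}$ via \cite[Lemma 2.11 and Theorem 2.13]{BLIntegral} and then deduces the proposition immediately from the finite-level congruence of Lemma~\ref{lemm:modpColn}. Your additional remarks (mod-$p$ reduction commuting with the projective limit, and the compatibility of \eqref{eq:isohiw} with the finite-level identifications coming from Theorem~\ref{theo:Wachmodp}) only make explicit points the paper leaves implicit.
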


\subsection{Non-primitive signed Selmer groups}
We now define and compare the \emph{non-primitive signed} Selmer groups under the
hypothesis \textbf{(Cong.)} and deduce our main result.

The next lemma is well-known~\cite[Lemma 3.5.3]{MazurRubin04}.
\begin{lemm} \label{lemm:modpcoh}
	The exact sequence 
	\[ 
		0 \rightarrow M^*[p] \rightarrow M^* \xrightarrow{p} M^*
		\rightarrow 0
	\]
	of $G_{F_\infty}$-modules
	induces isomorphisms
	\[
		\coh^1(F_\Sigma/F_\infty,M^*[p])\simeq
		\coh^1(F_\Sigma/F_\infty,M^*)[p],
	\]
	and 
	\[
		\coh^1(F_{v,\infty},M^*[p])\simeq
		\coh^1(F_{v,\infty},M^*)[p],
	\]
	for any prime $v$ dividing $p$,
	and, for any non-archimedean prime $v$ not dividing $p$ or a
	prime of ramification of $M^*$,
	\[
		\coh^1(F_{v,\mathrm{unr}},M^*[p])\simeq
		\coh^1(F_{v,\mathrm{unr}},M^*)[p].
	\]
\end{lemm}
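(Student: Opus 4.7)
The plan is to deduce all three isomorphisms uniformly from the long exact sequence of Galois cohomology attached to
\[
0 \to M^*[p] \to M^* \xrightarrow{p} M^* \to 0.
\]
For any closed subgroup $G$ of the relevant Galois group, the piece of interest is
\[
\coh^0(G, M^*) \xrightarrow{p} \coh^0(G, M^*) \to \coh^1(G, M^*[p]) \to \coh^1(G, M^*)[p] \to 0,
\]
so in each of the three cases the desired isomorphism is equivalent to $\coh^0(G, M^*)$ being $p$-divisible.

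For the second isomorphism, taking $G = G_{F_{v,\infty}}$ with $v \mid p$, the vanishing $\coh^0(F_{v,\infty}, M^*) = 0$ has already been recorded in the paragraph preceding Lemma~\ref{lemm:divisible}: hypothesis \textbf{(Tors.)} together with Tate local duality gives $\coh^0(F_v, M^*) = 0$, and since $\gal(F_{v,\infty}/F_v) \simeq \Zp$ is pro-$p$ acting continuously on the discrete $p$-primary module $(M^*)^{G_{F_{v,\infty}}}$, this vanishing lifts up to $F_{v,\infty}$. For the first (global) isomorphism, with $G = \gal(F_\Sigma/F_\infty)$, I would fix any prime $v \mid p$ of $F$ and a place $\tilde v$ of $F_\infty$ above $v$, and use the inclusion $G_{F_{\infty,\tilde v}} \simeq G_{F_{v,\infty}} \hookrightarrow \gal(F_\Sigma/F_\infty)$ to conclude
\[
\coh^0(F_\infty, M^*) \subseteq (M^*)^{G_{F_{v,\infty}}} = 0.
\]
In both cases $\coh^0(G, M^*) = 0$ is trivially $p$-divisible.

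For the third isomorphism, at a non-archimedean prime $v$ with $v \nmid p$ at which $M^*$ is unramified, the subgroup $G_{F_{v,\mathrm{unr}}}$ (the inertia) acts trivially on $M^*$, so $\coh^0(F_{v,\mathrm{unr}}, M^*) = M^*$. Since $M^* = T^* \otimes \Qp/\Zp$ and $\Qp/\Zp$ is divisible, $M^*$ is $p$-divisible, and the required isomorphism follows.

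There is no real obstacle in this argument: once the Kummer-style long exact sequence is written down, each case reduces to a one-line verification, either of a known vanishing coming from \textbf{(Tors.)} or of the $p$-divisibility of $M^*$ itself. The only book-keeping point is to remember, in the global case, that the vanishing of the local $H^0$ at any prime above $p$ already forces the global one via the embedding of decomposition groups into $\gal(F_\Sigma/F_\infty)$.
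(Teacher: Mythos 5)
Your proposal is correct and follows essentially the same route as the paper: write down the Kummer-type long exact sequence, observe that the obstruction is $\coh^0/p$, and kill it using $\coh^0(F_\infty,M^*)=\coh^0(F_{v,\infty},M^*)=0$ (from \textbf{(Tors.)}, Tate duality and the pro-$p$ argument) in the first two cases and the divisibility of $M^*=\coh^0(F_{v,\mathrm{unr}},M^*)$ in the unramified case. The only cosmetic difference is that you phrase the criterion as $p$-divisibility of $\coh^0$ and justify the global vanishing via the embedding of a decomposition group at $p$, which is exactly the justification the paper records earlier and invokes implicitly.
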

\begin{proof}
	We have the exact sequence
	\[
		0 \rightarrow  \coh^0(F_\infty,M^*)/p \rightarrow
		\coh^1(F_\Sigma/F_\infty,M^*[p]) \rightarrow
		\coh^1(F_\Sigma/F_\infty,M^*)[p]\rightarrow 0.
	\]
	Since $\coh^0(F_\infty,M^*)$ is trivial by our hypothesis
	\textbf{(Tors.)}, we get the first isomorphism. The same proof
	applies for the second isomorphism at $v$ dividing $p$.
	Let $v$ as in the third statement, then $v$ is not a prime of
	ramification for $M^*$, thus $\coh^0(F_{v,\mathrm{unr}},M^*) =
	M^*$. Hence, we have the exact sequence 
	\[
		0 \rightarrow M^*/p \rightarrow 
		\coh^1(F_{v,\mathrm{unr}},M^*[p])\rightarrow
		\coh^1(F_{v,\mathrm{unr}},M^*)[p] \rightarrow 0.
	\]
	Since $M^*$ is divisible, we deduce the second isomorphism.
\end{proof}

\begin{defi} \label{defi:non-primitive}
	Let $\Sigma_0 \subset \Sigma$ be a subset that contains all the
	primes of ramification of $M^*$ but none of the primes of $F$ dividing $p$
	nor the archimedean primes.
	We define the \emph{$\Sigma_0$-non-primitive} $\underline{I}$-Selmer groups
	of $M^*$ over $F_\infty$
	by
	\[
		\sel_{\underline{I}}^{\Sigma_0}(M^*/F_\infty) =
		\ker(\coh^1(F_\Sigma/F_\infty,M^*) \rightarrow
		\PP_{\Sigma \setminus
		\Sigma_0,\underline{I}}(M^*/F_\infty)).
	\]
	If $v$ is a prime dividing $p$, by Lemma~\ref{lemm:modpcoh} we
	have $\coh^1(F_{v,\infty},M^*[p])\simeq
	\coh^1(F_{v,\infty},M^*)[p]$ and we set 
	\[
		\coh^1_{I_v}(F_{v,\infty},M^*[p]) =
		\coh^1_{I_v}(F_{v,\infty},M^*)[p] \subset
		\coh^1(F_{v,\infty},M^*[p]).
	\]
	We set 
	\[
		\PP_{\Sigma \setminus
		\Sigma_0,\underline{I}}(M^*[p]/F_\infty) = \prod_{w \in
		\Sigma \setminus \Sigma_0, w \nmid p}
		\coh^1(F_{w,\mathrm{unr}},M^*[p])
		\times \prod_{w \mid p}
		\frac{\coh^1(F_{w,\infty},M^*[p])}{\coh^1_{I_v}(F_{w,\infty},M^*[p])}.
	\]
	We define the \emph{$\Sigma_0$-non-primitive} $\underline{I}$-Selmer
	groups of $M^*[p]$ over $F_\infty$ by
	\[
		\sel_{\underline{I}}^{\Sigma_0}(M^*[p]/F_\infty) =
		\ker(\coh^1(F_\Sigma/F_\infty,M^*[p]) \rightarrow
		\PP_{\Sigma \setminus
		\Sigma_0,\underline{I}}(M^*[p]/F_\infty)).
	\]
\end{defi}

\begin{rema} \label{rema:modp}
	By Tate's pairing \eqref{eq:cohompairing}, the Pontryagin dual of
	$\coh^1_{I_v}(F_{v,\infty},M^*[p])$ is $(\im
	\col_{T,I_v})/p$.
\end{rema}

From now on, we write abusively the $\mu$ and $\lambda$-invariants of
the various Selmer groups to refer to the $\mu$ and $\lambda$-invariants
of their Pontryagin duals.

\begin{prop} \label{prop:modp}
	For any $\Sigma_0 \subset \Sigma$ as in
	Definition~\ref{defi:non-primitive}, we have an isomorphism of
	$\Zp[[\Gamma]]$-modules
	\[
		\sel_{\underline{I}}^{\Sigma_0}(M^*[p]/F_\infty) \simeq 
		\sel_{\underline{I}}^{\Sigma_0}(M^*/F_\infty)[p].
	\]
\end{prop}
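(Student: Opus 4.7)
The plan is to deduce the isomorphism from a diagram chase in a commutative square of left-exact sequences obtained by comparing the defining sequence of $\sel_{\underline{I}}^{\Sigma_0}(M^*[p]/F_\infty)$ with the $p$-torsion of the defining sequence of $\sel_{\underline{I}}^{\Sigma_0}(M^*/F_\infty)$. The inclusion $M^*[p] \hookrightarrow M^*$ together with Lemma~\ref{lemm:modpcoh} supplies all the vertical maps needed.

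Concretely, I would write down
\[
\begin{tikzcd}[column sep=small]
0 \arrow{r} & \sel_{\underline{I}}^{\Sigma_0}(M^*[p]/F_\infty) \arrow{r} \arrow{d}{\alpha} & \coh^1(F_\Sigma/F_\infty, M^*[p]) \arrow{r} \arrow{d}{\beta} & \PP_{\Sigma \setminus \Sigma_0, \underline{I}}(M^*[p]/F_\infty) \arrow{d}{\gamma} \\
0 \arrow{r} & \sel_{\underline{I}}^{\Sigma_0}(M^*/F_\infty)[p] \arrow{r} & \coh^1(F_\Sigma/F_\infty, M^*)[p] \arrow{r} & \PP_{\Sigma \setminus \Sigma_0, \underline{I}}(M^*/F_\infty)[p]
\end{tikzcd}
\]
in which the top row is exact by definition of the non-primitive Selmer group and the bottom row is exact because the $p$-torsion functor is left exact. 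The middle map $\beta$ is an isomorphism by the first assertion of Lemma~\ref{lemm:modpcoh}.

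The crux is to establish that $\gamma$ is injective, which I would check factor by factor. For a prime $w \in \Sigma \setminus \Sigma_0$ with $w \nmid p$, such $w$ is not a prime of ramification of $M^*$ by the choice of $\Sigma_0$, so the third isomorphism of Lemma~\ref{lemm:modpcoh} identifies the two factors $\coh^1(F_{w, \mathrm{unr}}, M^*[p])$ and $\coh^1(F_{w, \mathrm{unr}}, M^*)[p]$ and injectivity is automatic. For $v \mid p$, writing $A = \coh^1(F_{v,\infty}, M^*)$ and $B = \coh^1_{I_v}(F_{v,\infty}, M^*)$, the second isomorphism of Lemma~\ref{lemm:modpcoh} together with the definition $\coh^1_{I_v}(F_{v,\infty}, M^*[p]) = B[p]$ identifies the corresponding factor of $\gamma$ with the canonical map $A[p]/B[p] \to (A/B)[p]$; this is injective by the snake lemma applied to $0 \to B \to A \to A/B \to 0$ and multiplication by $p$.

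With $\beta$ an isomorphism and $\gamma$ injective, a standard four-lemma diagram chase on the square above yields that $\alpha$ is an isomorphism of $\Zp[[\Gamma]]$-modules, which is precisely the content of the proposition. I do not anticipate a serious obstacle: the argument is essentially bookkeeping, and the only delicate point is matching the various local conditions, which the definitions of $\coh^1_{I_v}(F_{v, \infty}, M^*[p])$ and of $\PP_{\Sigma \setminus \Sigma_0, \underline{I}}(M^*[p]/F_\infty)$ were tailored to accomplish.
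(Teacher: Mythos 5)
Your proposal is correct and is essentially the paper's own proof: the paper likewise applies Lemma~\ref{lemm:modpcoh} to identify $\coh^1(F_\Sigma/F_\infty,M^*[p])$ with $\coh^1(F_\Sigma/F_\infty,M^*)[p]$ and then checks the local conditions factor by factor (trivial at archimedean places since $p$ is odd, via the third isomorphism of the lemma at $w\nmid p$, and by definition at $v\mid p$, where your snake-lemma injectivity of $A[p]/B[p]\to (A/B)[p]$ just makes the last point explicit). The only small caveat is that at $w\nmid p$ the actual factor of $\PP_{\Sigma\setminus\Sigma_0,\underline{I}}(M^*/F_\infty)$ is $\coh^1(F_{\infty,w},M^*)/\hur(F_{\infty,w},M^*)$ rather than $\coh^1(F_{w,\mathrm{unr}},M^*)$, so one should add that this quotient maps injectively into $\coh^1(F_{w,\mathrm{unr}},M^*)$ (the inflation term vanishes because $\gal(F_{w,\mathrm{unr}}/F_{\infty,w})$ has pro-order prime to $p$, and $\hur(F_{\infty,w},M^*)=0$), so the two local conditions coincide — a point the paper elides in the same way.
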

\begin{proof}
	By Lemma~\ref{lemm:modpcoh}, we have
	\[
		\coh^1(F_\Sigma/F_\infty,M^*[p])\simeq
		\coh^1(F_\Sigma/F_\infty,M^*)[p].
	\]
	Therefore, in order to prove the Proposition, it is enough to
	compare the local conditions defining the two Selmer groups.
	At $v \in \Sigma \setminus \Sigma_0$ and $v$ not dividing $p$, the
	second part of Lemma~\ref{lemm:modpcoh} shows that the local
	conditions are equivalent. Since $p$ is odd, the
	archimedean part is trivial. At $v$ dividing $p$, by definition
	the local conditions are the same.
\end{proof}

We now relate the $\Sigma_0$-non-primitive signed Selmer groups to the signed Selmer groups.
\begin{prop}\label{prop:relationSelmer}
Assume that $\sel_{\underline{I}^c}(M/F_\infty)$ is a cotorsion $\Zp[[\Gamma]]$-module. 
Then 
	\[
			\sel_{\underline{I}}^{\Sigma_0}(M^*/F_\infty)/\sel_{\underline{I}}(M^*/F_\infty)
			\simeq \prod_{w \in \Sigma_0}
			\frac{\coh^1(F_{\infty,w},M^*)}{\hur(F_{\infty,w},M^*)}.
	\]
\end{prop}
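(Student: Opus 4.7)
The plan is to use the snake lemma on the natural commutative diagram that expresses each of the two Selmer groups as the kernel of the localization map into the appropriate product of local conditions. Concretely, I would consider the diagram with the identity as the middle vertical arrow:
\begin{center}
\begin{tikzcd}
0 \arrow{r} & \sel_{\underline{I}}(M^*/F_\infty) \arrow{r}\arrow[hook]{d} & \coh^1(F_\Sigma/F_\infty,M^*) \arrow{r}\arrow[equal]{d} & \PP_{\Sigma,\underline{I}}(M^*/F_\infty) \arrow{r}\arrow[two heads]{d} & 0 \\
0 \arrow{r} & \sel_{\underline{I}}^{\Sigma_0}(M^*/F_\infty) \arrow{r} & \coh^1(F_\Sigma/F_\infty,M^*) \arrow{r} & \PP_{\Sigma\setminus\Sigma_0,\underline{I}}(M^*/F_\infty) \arrow{r} & 0
\end{tikzcd}
\end{center}
where the rightmost vertical map is the obvious projection, whose kernel is precisely
\[
\prod_{w \in \Sigma_0} \frac{\coh^1(F_{\infty,w},M^*)}{\hur(F_{\infty,w},M^*)}.
\]

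The first thing to check is that both rows are short exact. For the top row, this is exactly Proposition~\ref{prop:surjection}, for which the hypothesis that $\sel_{\underline{I}^c}(M/F_\infty)$ is a cotorsion $\Zp[[\Gamma]]$-module is essential. Once the top row is known to be surjective on the right, the bottom row inherits surjectivity since the right vertical map is itself surjective (projection onto a subset of coordinates). The left vertical map is the natural inclusion, which comes from the fact that $\sel_{\underline{I}}(M^*/F_\infty)$ is cut out by strictly more local conditions than $\sel_{\underline{I}}^{\Sigma_0}(M^*/F_\infty)$.

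The snake lemma then yields a four-term exact sequence beginning with $0 \to \sel_{\underline{I}}(M^*/F_\infty) \to \sel_{\underline{I}}^{\Sigma_0}(M^*/F_\infty)$ and continuing through the kernel of the right vertical map, with the connecting homomorphism landing in the cokernel of the identity, which is zero. Thus the snake sequence collapses to
\[
0 \to \sel_{\underline{I}}(M^*/F_\infty) \to \sel_{\underline{I}}^{\Sigma_0}(M^*/F_\infty) \to \prod_{w \in \Sigma_0} \frac{\coh^1(F_{\infty,w},M^*)}{\hur(F_{\infty,w},M^*)} \to 0,
\]
which gives the desired description of the quotient.

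There is no real obstacle here beyond correctly invoking Proposition~\ref{prop:surjection}: its assumption on $\sel_{\underline{I}^c}(M/F_\infty)$ matches exactly the hypothesis of the proposition being proved, so the surjectivity of the upper right horizontal map (the only non-trivial ingredient) is available. The remainder is a diagram chase.
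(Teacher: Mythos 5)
Your argument is correct and is essentially the paper's own proof: the same three-column diagram with the identity in the middle, the surjectivity of the top row supplied by Proposition~\ref{prop:surjection}, and the snake lemma identifying $\sel_{\underline{I}}^{\Sigma_0}(M^*/F_\infty)/\sel_{\underline{I}}(M^*/F_\infty)$ with $\ker(h_{\Sigma_0})=\prod_{w\in\Sigma_0}\coh^1(F_{\infty,w},M^*)/\hur(F_{\infty,w},M^*)$. (Only a phrasing quibble: the snake sequence reads $0\to\ker a\to\ker b\to\ker c\to\coker a\to\coker b$, so the isomorphism comes from $\ker(h_{\Sigma_0})\xrightarrow{\;\sim\;}\coker(\text{inclusion})$, not from a connecting map out of $\sel_{\underline{I}}^{\Sigma_0}$; the conclusion is unaffected.)
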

\begin{proof}
By Proposition~\ref{prop:surjection}, we have the commutative diagram
\begin{equation} \label{diag:compare}
	\begin{tikzcd}
		0 \arrow{r} &
		\sel_{\underline{I}}(M^*/F_\infty)
		\arrow{r} \arrow{d} & \coh^1(F_\Sigma/F_\infty,M^*) \arrow{r} \arrow{d}{=} &
		\PP_{\Sigma,\underline{I}}(M^*/F_\infty)
		\arrow{r} \arrow{d}{h_{\Sigma_0}}  & 0\\
		0 \arrow{r} &
		\sel_{\underline{I}}^{\Sigma_0}(M^*/F_\infty)
		\arrow{r} & \coh^1(F_\Sigma/F_\infty,M^*) \arrow{r} &
		\PP_{\Sigma\setminus
		\Sigma_0,\underline{I}}(M^*/F_\infty).
		& 
	\end{tikzcd}
\end{equation}
The Snake lemma applied to diagram~\eqref{diag:compare} gives
\[
	\sel_{\underline{I}}^{\Sigma_0}(M^*/F_\infty)/\sel_{\underline{I}}(M^*/F_\infty)
	\simeq \ker(h_{\Sigma_0})=\prod_{w \in \Sigma_0}
	\frac{\coh^1(F_{\infty,w},M^*)}{\hur(F_{\infty,w},M^*)}.
\]
\end{proof}

\begin{coro} \label{coro:relationSelmer}
Assume that $\sel_{\underline{I}^c}(M/F_\infty)$ and $\sel_{\underline{I}}(M^*/F_\infty)$ are cotorsion $\Zp[[\Gamma]]$-modules.
	Then $\sel_{\underline{I}}^{\Sigma_0}(M^*/F_\infty)$ is a cotorsion $\Zp[[\Gamma]]$-module. Furthermore, we have 
	\[
		\mu(\sel_{\underline{I}}^{\Sigma_0}(M^*/F_\infty))=\mu(\sel_{\underline{I}}(M^*/F_\infty)),
	\]
	and
	\[
	\corank_{\Zp} \sel_{\underline{I}}^{\Sigma_0}(M^*/F_\infty) =
	\corank_{\Zp} \sel_{\underline{I}}(M^*/F_\infty) + \sum_{w\in
	\Sigma_0} \corank_{\Zp} \coh^1(F_{\infty,w},M^*).
	\]
\end{coro}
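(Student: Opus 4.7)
The plan is to apply the short exact sequence of Proposition \ref{prop:relationSelmer},
\[
0 \to \sel_{\underline{I}}(M^*/F_\infty) \to \sel_{\underline{I}}^{\Sigma_0}(M^*/F_\infty) \to \prod_{w \in \Sigma_0} \frac{\coh^1(F_{\infty,w},M^*)}{\hur(F_{\infty,w},M^*)} \to 0,
\]
and to derive all three assertions from a structural analysis of the rightmost term. First I would simplify each local factor: since every $w \in \Sigma_0$ is non-archimedean and does not divide $p$, the subgroup $\hur(F_{\infty,w},M^*)$ is trivial (as already used in the proof of Lemma \ref{lemm:surjection} via \cite[\S A.2.4]{PRLivre}), so each quotient reduces to $\coh^1(F_{\infty,w},M^*)$.

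Next I would invoke \cite[Proposition 2]{GreenbergIwasawaRep} (the same input used in Lemma \ref{lemm:corank}) to conclude that $\coh^1(F_{\infty,w},M^*)$ is a cotorsion $\Zp[[\Gamma]]$-module for every such $w$. Together with the hypothesis that $\sel_{\underline{I}}(M^*/F_\infty)$ is cotorsion, the short exact sequence then forces $\sel_{\underline{I}}^{\Sigma_0}(M^*/F_\infty)$ to be cotorsion as well. To get the equality of $\mu$-invariants, I would show that each local factor $\coh^1(F_{\infty,w},M^*)$ has $\mu$-invariant zero: the decomposition group at $w$ in $F_\infty/F$ is isomorphic to $\Zp$ (with only finitely many primes above $w$), and $M^*$ is cofinitely generated over $\Zp$, so a standard argument identifies the Pontryagin dual of $\coh^1(F_{\infty,w},M^*)$ with a finitely generated $\Zp$-module, which has vanishing $\mu$-invariant. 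Since $\mu$-invariants and $\Zp$-coranks are both additive in short exact sequences of cotorsion $\Zp[[\Gamma]]$-modules, the identity $\mu(\sel_{\underline{I}}^{\Sigma_0}(M^*/F_\infty))=\mu(\sel_{\underline{I}}(M^*/F_\infty))$ and the $\Zp$-corank formula follow at once.

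The main obstacle is the $\mu$-invariant vanishing for the local factors at $w\nmid p$, $w \in \Sigma_0$, since $w$ may be a prime of ramification of $M^*$. The cleanest route is probably to analyze $\coh^1(F_{\infty,w},M^*)$ via the inflation–restriction spectral sequence with respect to the inertia subgroup and then use that the residue field extension in $F_\infty/F$ is a $\Zp$-extension of a finite field, which keeps the dual finitely generated over $\Zp$; everything else in the proof is then routine bookkeeping on the short exact sequence.
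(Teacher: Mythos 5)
Your proof is correct and follows essentially the same route as the paper: the short exact sequence from Proposition~\ref{prop:relationSelmer}, triviality of $\hur(F_{\infty,w},M^*)$, cotorsionness of $\coh^1(F_{\infty,w},M^*)$ via \cite[Proposition 2]{GreenbergIwasawaRep}, and vanishing of the $\mu$-invariants of the local factors, followed by additivity of $\mu$ and of $\Zp$-coranks. The only difference is that where you sketch the standard inertia/inflation--restriction argument showing the dual of $\coh^1(F_{\infty,w},M^*)$ is finitely generated over $\Zp$, the paper simply cites \cite[Proposition 2.4]{GreenbergVatsal} for this fact.
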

\begin{proof}
We have already noted that $\hur(F_{\infty,w},M^*)$ is
trivial (by \cite[\S A.2.4]{PRLivre}) and that $\coh^1(F_{\infty,w},M^*)$ is a cotorsion
$\Zp[[\Gamma]]$-module for all $w \in \Sigma_0$ (by \cite[Proposition 2]{GreenbergIwasawaRep}). 
Furthermore, the $\mu$-invariant of the Pontryagin dual of $\coh^1(F_{\infty,w},M^*)$ is zero
\cite[Proposition 2.4]{GreenbergVatsal}.
	Thus the corollary follows from Proposition~\ref{prop:relationSelmer}.
\end{proof}

We have an analogue of Theorem~\ref{theo:submodules} for $\Sigma_0$-non-primitive
signed Selmer groups.
\begin{prop} \label{prop:submodulesNonPrimitive}
	Assume that
	$\sel_{\underline{I}}(M^*/F_\infty)$ and
	$\sel_{\underline{I}^c}(M/F_\infty)$ are cotorsion
	$\Zp[[\Gamma]]$-modules. Then
	$\sel_{\underline{I}}^{\Sigma_0}(M^*/F_\infty)$ has no proper
	sub-$\Zp[[\Gamma]]$-modules of finite index.
\end{prop}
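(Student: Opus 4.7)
The plan is to deduce this result from Theorem~\ref{theo:submodules} by combining it with the exact sequence of Proposition~\ref{prop:relationSelmer}. Since every $w \in \Sigma_0$ is non-archimedean and does not divide $p$, the subgroup $\hur(F_{\infty,w}, M^*)$ is trivial (as noted in the proof of Corollary~\ref{coro:relationSelmer}), so Proposition~\ref{prop:relationSelmer} yields the short exact sequence
\[
	0 \to \sel_{\underline{I}}(M^*/F_\infty) \to \sel^{\Sigma_0}_{\underline{I}}(M^*/F_\infty) \to \prod_{w \in \Sigma_0} \coh^1(F_{\infty,w}, M^*) \to 0.
\]

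The next step is an elementary extension principle for discrete $\Lambda$-modules: if $0 \to A \to B \to C \to 0$ is exact and both $A$ and $C$ have no proper sub-$\Lambda$-module of finite index, then neither does $B$. Indeed, given $B^\prime \subseteq B$ of finite index, the induced injection $A/(A \cap B^\prime) \hookrightarrow B/B^\prime$ forces $A \cap B^\prime = A$, and the induced surjection of $B/B^\prime$ onto the quotient of $C$ by the image of $B^\prime$ forces that image to equal $C$; together these give $B^\prime = B$. Theorem~\ref{theo:submodules} supplies the property for $A = \sel_{\underline{I}}(M^*/F_\infty)$.

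The proof therefore reduces to showing that each factor $\coh^1(F_{\infty,w}, M^*)$ with $w \in \Sigma_0$ has no proper sub-$\Lambda$-module of finite index. This is the step that I expect to require the most care: it is a local analogue of Proposition~\ref{prop:submodule}, and should follow from the same principle \cite[Proposition~5]{GreenbergIwasawaRep} applied at the local field $F_{\infty,w}$, the required input being the vanishing of $\coh^2(F_{\infty,w}, M^*)$. This vanishing holds because $F_{\infty,w}$ has residue characteristic different from $p$, so its $p$-cohomological dimension is one, and $\coh^2(F_{\infty,w}, M^*) = \varinjlim_n \coh^2(F_{n,w}, M^*) = 0$. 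Combined with the extension principle above, this concludes the proof.
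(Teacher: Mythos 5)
Your argument is correct, but it follows a genuinely different route from the paper's. The paper notes that Proposition~\ref{prop:surjection} already gives the surjectivity of $\coh^1(F_\Sigma/F_\infty,M^*)\rightarrow \PP_{\Sigma\setminus\Sigma_0,\underline{I}}(M^*/F_\infty)$ (and of the corresponding map over $F$), and then simply reruns the proof of Theorem~\ref{theo:submodules} with $\PP_{\Sigma\setminus\Sigma_0,\underline{I}}$ in place of $\PP_{\Sigma,\underline{I}}$: twisting by $\chi^s_{|\Gamma}$, Lemma~\ref{lemm:surjection}, Proposition~\ref{prop:submodule}, and the vanishing of the $\Gamma$-coinvariants of the Selmer group. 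You instead deduce the non-primitive statement from the primitive one via the short exact sequence coming from Proposition~\ref{prop:relationSelmer} (valid since $\hur(F_{\infty,w},M^*)=0$ for $w\in\Sigma_0$) together with an extension principle; your reduction and the extension lemma are both correct, and the remaining local input is also true: since $F_{\infty,w}$ contains the unramified $\Zp$-extension of the local field below it, restriction multiplies Brauer invariants by the degree, so $\coh^2(F_{\infty,w},N)=0$ for every discrete $p$-primary module $N$; taking $N=M^*[p]$ shows $\coh^1(F_{\infty,w},M^*)$ is divisible, hence has no proper subgroup of finite index at all. The paper's route is uniform with the primitive case and works directly for all twists $M^*_s$; yours is shorter and makes transparent that passing from primitive to non-primitive Selmer groups costs nothing precisely because the extra local factors are divisible.

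Two points in your last step should be tightened. First, ``residue characteristic different from $p$'' does not by itself give $p$-cohomological dimension one: a finite extension of $\Q_\ell$ with $\ell\neq p$ has $p$-cohomological dimension $2$, and indeed $\coh^2(F_{n,w},M^*)\simeq \coh^0(F_{n,w},T)^\vee$ need not vanish at finite level; what kills $\coh^2$ is the passage up the $\Zp$-tower, as explained above (equivalently, the divisibility and cofinite $\Zp$-corank of $\coh^1(F_{\infty,w},M^*)$ is contained in \cite[Proposition 2.4]{GreenbergVatsal}, which the paper already invokes). Second, \cite[Proposition 5]{GreenbergIwasawaRep} is a statement about the global group $\gal(F_\Sigma/F_\infty)$, so quoting it ``at the local field'' is not literally licensed; but the divisibility argument just given replaces it and is in fact stronger, since it excludes all finite-index subgroups, not only $\Zp[[\Gamma]]$-submodules of finite index. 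With these substitutions your proof is complete.
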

\begin{proof}
	From the definition of the $\Sigma_0$-non-primitive signed Selmer groups
	and Proposition~\ref{prop:surjection}, we have an analogue of
	Proposition~\ref{prop:surjection} for the $\Sigma_0$-non-primitive signed
	Selmer groups. That is, in diagram~\eqref{diag:compare}, the surjectivity of $h_{\Sigma_0}$ implies that 
	the sequence
	\begin{equation} \label{diag:analoguesurjection}
		0 \rightarrow
		\sel_{\underline{I}}^{\Sigma_0}(M^*/F_\infty)
		\rightarrow \coh^1(F_\Sigma/F_\infty,M^*) \rightarrow
		\PP_{\Sigma\setminus
		\Sigma_0,\underline{I}}(M^*/F_\infty)
		\rightarrow 0
	\end{equation}
	is exact.
	Similarly, by Proposition~\ref{prop:surjection}, we have
	\[
		0 \rightarrow
		\sel_{\underline{I}}^{\Sigma_0}(M^*/F)
		\rightarrow \coh^1(F_\Sigma/F,M^*) \rightarrow
		\PP_{\Sigma\setminus \Sigma_0,\underline{I}}(M^*/F)
		\rightarrow 0.
	\]
	The proof of the Proposition then follows precisely the one of
	Theorem~\ref{theo:submodules}, hence, we skip it.
\end{proof}

\begin{coro} \label{coro:lambda}
	Assume that
	$\sel_{\underline{I}}(M^*/F_\infty)$ and
	$\sel_{\underline{I}^c}(M/F_\infty)$ are cotorsion
	$\Zp[[\Gamma]]$-modules. Furthemore, assume that the
	$\mu$-invariant of $\sel_{\underline{I}}(M^*/F_\infty)$ is zero.
	Then the $\lambda$-invariant of
	$\sel_{\underline{I}}^{\Sigma_0}(M^*/F_\infty)$ is equal to $\dim_{\Fp}
	\sel_{\underline{I}}^{\Sigma_0}(M^*[p]/F_\infty)$.
\end{coro}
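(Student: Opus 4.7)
The plan is to combine Proposition~\ref{prop:modp}, which identifies $\sel_{\underline{I}}^{\Sigma_0}(M^*[p]/F_\infty)$ with the $p$-torsion of $\sel_{\underline{I}}^{\Sigma_0}(M^*/F_\infty)$, with a structural analysis of the Pontryagin dual of the latter coming from Proposition~\ref{prop:submodulesNonPrimitive} and Corollary~\ref{coro:relationSelmer}. Write $X \coloneqq \sel_{\underline{I}}^{\Sigma_0}(M^*/F_\infty)$ and let $X^\vee$ denote its Pontryagin dual.

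First, I would invoke Corollary~\ref{coro:relationSelmer} to conclude that $X$ is a cotorsion $\Zp[[\Gamma]]$-module whose $\mu$-invariant equals that of $\sel_{\underline{I}}(M^*/F_\infty)$, which vanishes by hypothesis. The structure theorem recalled in \S\ref{subsec:cyclo} then implies that $X^\vee$ is a finitely generated $\Zp$-module whose $\Zp$-rank coincides with the $\lambda$-invariant of $X$.

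Next, I would use Proposition~\ref{prop:submodulesNonPrimitive}, which guarantees that $X$ has no proper sub-$\Zp[[\Gamma]]$-module of finite index. Under Pontryagin duality, submodules of finite index in $X$ correspond to finite $\Zp[[\Gamma]]$-quotients of $X^\vee$ but, dually, the absence of proper finite-index submodules is equivalent to the vanishing of the maximal finite $\Zp[[\Gamma]]$-submodule of $X^\vee$. Since $X^\vee$ is finitely generated over $\Zp$, its $\Zp$-torsion $X^\vee[p^\infty]$ is finite and $\Gamma$-stable, hence must vanish by the previous observation. Therefore $X^\vee$ is $\Zp$-free of rank $\lambda(X)$.

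Finally, Pontryagin-dualizing the exact sequence
\[
    0 \rightarrow X[p] \rightarrow X \xrightarrow{\;p\;} X \rightarrow X/pX \rightarrow 0
\]
yields $X[p]^\vee \simeq X^\vee/pX^\vee$, an $\Fp$-vector space of dimension $\lambda(X)$ by the previous step. Combining this with the identification $X[p] \simeq \sel_{\underline{I}}^{\Sigma_0}(M^*[p]/F_\infty)$ provided by Proposition~\ref{prop:modp} gives the desired equality. I do not foresee a genuine obstacle: all the substantive work has been done in the preceding propositions, and the only delicate point is the duality translation that upgrades the absence of proper finite-index submodules of $X$, together with $\mu(X) = 0$, to $\Zp$-freeness of $X^\vee$.
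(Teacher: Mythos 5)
Your proposal is correct and follows essentially the same route as the paper: Corollary~\ref{coro:relationSelmer} gives cotorsion with vanishing $\mu$-invariant (so the dual is a finitely generated $\Zp$-module), Proposition~\ref{prop:submodulesNonPrimitive} kills the finite $\Zp$-torsion of the dual (equivalently, makes $\sel_{\underline{I}}^{\Sigma_0}(M^*/F_\infty)$ divisible of corank $\lambda$), and Proposition~\ref{prop:modp} identifies the $p$-torsion with $\sel_{\underline{I}}^{\Sigma_0}(M^*[p]/F_\infty)$. Your explicit duality bookkeeping (finite-index submodules versus finite submodules of the dual, and $X[p]^\vee \simeq X^\vee/pX^\vee$) just spells out what the paper leaves implicit.
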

\begin{proof} 
	By Corollary~\ref{coro:relationSelmer}, $\sel_{\underline{I}}^{\Sigma_0}(M^*/F_\infty)$ is a cotorsion
	$\Zp[[\Gamma]]$-module and its
	$\mu$-invariant is zero. 
	Thus, the Pontryagin dual of
	$\sel_{\underline{I}}^{\Sigma_0}(M^*/F_\infty)$ is a finitely
	generated $\Zp$-module, and, by
	Proposition~\ref{prop:submodulesNonPrimitive}, its $\Zp$-torsion
	submodule is trivial. Thus,
	$\sel_{\underline{I}}^{\Sigma_0}(M^*/F_\infty)$ is
	$\Zp$-divisible with $\Zp$-corank its $\lambda$-invariant (\textit{i.e.}
	$\sel_{\underline{I}}^{\Sigma_0}(M^*/F_\infty) \simeq
	(\Qp/\Zp)^\lambda$). Therefore, by Proposition~\ref{prop:modp},
	we have 
	\[
		\lambda =
		\dim_{\Fp}\sel_{\underline{I}}^{\Sigma_0}(M^*/F_\infty)[p]
		=
		\dim_{\Fp}\sel_{\underline{I}}^{\Sigma_0}(M^*[p]/F_\infty).
	\]
\end{proof}

We are now able to prove the main result of this section.
\begin{theo} \label{theo:congruences}
	Assume \textbf{(Cong.)} and choose compatible good bases for $T$
	and $T'$ as in \S \ref{subsec:congruencesColeman}. Let
	$\underline{I} \in \II_p$ and $\Sigma_0$ a finite set of prime
	as in Definition~\ref{defi:non-primitive} containing the primes
	of ramification of $M^*$ and $M^{\prime,*}$. Further, assume that
	$\sel_{\underline{I}}(M^*/F_\infty)$,
	$\sel_{\underline{I}^c}(M/F_\infty)$,
	$\sel_{\underline{I}}(M^{\prime,*}/F_\infty)$ and
	$\sel_{\underline{I}^c}(M^\prime/F_\infty)$ are cotorsion
	$\Zp[[\Gamma]]$-modules.  Then the $\mu$-invariant of
	$\sel_{\underline{I}}(M^*/F_\infty)$ vanishes if and only if
	that of $\sel_{\underline{I}}(M^{\prime,*}/F_\infty)$ vanishes.
	Furthermore, when these $\mu$-invariants do vanish, the
	$\lambda$-invariants of
	$\sel_{\underline{I}}^{\Sigma_0}(M^*/F_\infty)$ and
	$\sel_{\underline{I}}^{\Sigma_0}(M^{\prime,*}/F_\infty)$ are
	equal.
\end{theo}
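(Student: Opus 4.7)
The plan is to reduce both parts of the theorem to a single $\Zp[[\Gamma]]$-isomorphism of mod-$p$ non-primitive Selmer groups
\[
	\sel^{\Sigma_0}_{\underline{I}}(M^*[p]/F_\infty) \simeq \sel^{\Sigma_0}_{\underline{I}}(M^{\prime,*}[p]/F_\infty).
\]

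To establish this isomorphism, I would first observe that \textbf{(Cong.)} gives $T/pT \simeq T^\prime/pT^\prime$ as $G_F$-modules; dualising (and using $M^*[p]\simeq T^*/pT^*$) yields a $G_F$-isomorphism $M^*[p]\simeq M^{\prime,*}[p]$. Combined with Lemma~\ref{lemm:modpcoh} this identifies $\coh^1(F_\Sigma/F_\infty,M^*[p])$ with $\coh^1(F_\Sigma/F_\infty,M^{\prime,*}[p])$, as well as the local terms at every $w\in\Sigma\setminus\Sigma_0$ not dividing $p$ (which is where the hypothesis that $\Sigma_0$ contains the primes of ramification of both $M^*$ and $M^{\prime,*}$ is used). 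At each $v\mid p$, by Remark~\ref{rema:modp} the Pontryagin dual of $\coh^1_{I_v}(F_{v,\infty},M^*[p])$ is $(\im\col_{T,I_v})/p$, and Proposition~\ref{prop:modplocalp} asserts that $\col_T$ and $\col_{T^\prime}$ coincide modulo $p$ under the isomorphism $\hiw(F_v,T)/p\simeq \hiw(F_v,T^\prime)/p$ of \eqref{eq:isohiw}. Hence the mod-$p$ local conditions at primes above $p$ also match, and the displayed isomorphism follows from the definition of the non-primitive Selmer groups.

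With the displayed isomorphism in hand, both conclusions follow by standard arguments. Corollary~\ref{coro:relationSelmer} gives $\mu(\sel_{\underline{I}}(M^*/F_\infty))=\mu(\sel^{\Sigma_0}_{\underline{I}}(M^*/F_\infty))$, and Proposition~\ref{prop:submodulesNonPrimitive} ensures $\sel^{\Sigma_0}_{\underline{I}}(M^*/F_\infty)$ has no proper $\Zp[[\Gamma]]$-submodule of finite index; the structure theorem then shows that this $\mu$-invariant vanishes if and only if the $p$-torsion $\sel^{\Sigma_0}_{\underline{I}}(M^*/F_\infty)[p]$ is finite, which by Proposition~\ref{prop:modp} coincides with $\sel^{\Sigma_0}_{\underline{I}}(M^*[p]/F_\infty)$. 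Applying the same to $M^{\prime,*}$ then yields the $\mu$-invariant equivalence. When both $\mu$-invariants vanish, Corollary~\ref{coro:lambda} identifies $\lambda(\sel^{\Sigma_0}_{\underline{I}}(M^*/F_\infty))$ with $\dim_{\Fp}\sel^{\Sigma_0}_{\underline{I}}(M^*[p]/F_\infty)$ (and analogously for the primed version), and the equality of $\lambda$-invariants drops out of the key isomorphism.

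The principal difficulty lies in the step at $v\mid p$: one must check that the Coleman-map congruence of Proposition~\ref{prop:modplocalp} is compatible, through Tate's local pairing, with the Galois isomorphism $M^*[p]\simeq M^{\prime,*}[p]$ induced by \textbf{(Cong.)}, so that the local conditions genuinely correspond rather than being merely abstractly isomorphic. Once this compatibility is verified, all remaining steps are formal consequences of the results of the previous section.
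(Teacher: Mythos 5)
Your proposal is correct and takes essentially the same route as the paper: both establish the key isomorphism $\sel_{\underline{I}}^{\Sigma_0}(M^*[p]/F_\infty) \simeq \sel_{\underline{I}}^{\Sigma_0}(M^{\prime,*}[p]/F_\infty)$ from the mod-$p$ Galois isomorphism together with Lemma~\ref{lemm:modpcoh}, Remark~\ref{rema:modp} and Proposition~\ref{prop:modplocalp}, and then deduce the $\mu$-statement via Proposition~\ref{prop:modp} and Corollary~\ref{coro:relationSelmer} and the $\lambda$-statement via Corollary~\ref{coro:lambda}. The compatibility at $v \mid p$ that you flag is exactly what the choice of compatible bases through Theorem~\ref{theo:Wachmodp} provides, so the paper handles it the same way and no further argument is needed.
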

\begin{proof}
	The hypothesis \textbf{(Cong.)} implies that $M^*[p] \simeq
	M^{\prime,*}[p]$ as $G_F$-representations. 
	Hence, we have
	\begin{align*}
		\coh^1(F_\Sigma/F_\infty,M^*[p]) & \simeq
		\coh^1(F_\Sigma/F_\infty,M^{\prime,*}[p]), \\
		\coh^1(F_{v,\mathrm{unr}},M^*[p]) & \simeq
		\coh^1(F_{v,\mathrm{unr}},M^{\prime,*}[p]), \\
	\end{align*}
	for $v$ not dividing $p$.
	By Remark~\ref{rema:modp} and Proposition~\ref{prop:modplocalp}, for each $v$ dividing $p$, we
	have
	\[
		\coh^1_{I_v}(F_{v,\infty},M^*[p]) \simeq
		\coh^1_{I_v}(F_{v,\infty},M^{\prime,*}[p]).
	\]
	It follows that
	\[
		\sel_{\underline{I}}^{\Sigma_0}(M^*[p]/F_\infty) \simeq 
		\sel_{\underline{I}}^{\Sigma_0}(M^{\prime,*}[p]/F_\infty).
	\]
	Combined with 
	Proposition~\ref{prop:modp}, we have
	\[
		\sel_{\underline{I}}^{\Sigma_0}(M^*/F_\infty)[p] \simeq
		\sel_{\underline{I}}^{\Sigma_0}(M^*[p]/F_\infty) \simeq 
		\sel_{\underline{I}}^{\Sigma_0}(M^{\prime,*}[p]/F_\infty) \simeq
		\sel_{\underline{I}}^{\Sigma_0}(M^{\prime,*}/F_\infty)[p].
	\]
	Therefore, the first assertion follows from Corollary~\ref{coro:relationSelmer}.
	The second claim follows from Corollary~\ref{coro:lambda}.
\end{proof}

As a direct consequence of Theorem~\ref{theo:congruences} and Corollary~\ref{coro:relationSelmer}, we get:
\begin{coro}\label{coro:congruences}
	Assume the same hypotheses as in Theorem~\ref{theo:congruences} and also assume that 
	\[
		\mu(\sel_{\underline{I}}(M^*/F_\infty))= \mu(\sel_{\underline{I}}(M^{\prime,*}/F_\infty))=0.
	\]
	Then 
	\[
		\lambda - \sum_{w \in \Sigma_0}
		\corank_{\Zp} \coh^1(F_{\infty,w},M^*) = \lambda^\prime - \sum_{w \in \Sigma_0}
		\corank_{\Zp} \coh^1(F_{\infty,w},M^{\prime,*}),
	\]
	where $\lambda$ (respectively $\lambda^\prime$) denotes the $\lambda$-invariant of $\sel_{\underline{I}}(M^*/F_\infty)$ (respectively $\sel_{\underline{I}}(M^{\prime,*}/F_\infty)$).
\end{coro}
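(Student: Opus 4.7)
The plan is to assemble two previously established facts: Theorem~\ref{theo:congruences} (which gives equality of the $\lambda$-invariants of the $\Sigma_0$-non-primitive signed Selmer groups under the $\mu=0$ hypothesis) and Corollary~\ref{coro:relationSelmer} (which converts between primitive and $\Sigma_0$-non-primitive invariants). Since the statement is about the \emph{primitive} $\lambda$-invariants $\lambda$ and $\lambda^\prime$, the proof is then a direct bookkeeping step.

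First, I would observe that by Corollary~\ref{coro:relationSelmer}, the assumption $\mu(\sel_{\underline{I}}(M^*/F_\infty))=0$ forces $\mu(\sel_{\underline{I}}^{\Sigma_0}(M^*/F_\infty))=0$, and similarly for $M^{\prime,*}$. Consequently, the $\lambda$-invariants of the two $\Sigma_0$-non-primitive Selmer groups coincide with their $\Zp$-coranks. Applying the corank formula from the same corollary,
\begin{align*}
\corank_{\Zp} \sel_{\underline{I}}^{\Sigma_0}(M^*/F_\infty) &= \lambda + \sum_{w\in \Sigma_0} \corank_{\Zp} \coh^1(F_{\infty,w}, M^*),\\
\corank_{\Zp} \sel_{\underline{I}}^{\Sigma_0}(M^{\prime,*}/F_\infty) &= \lambda^\prime + \sum_{w\in \Sigma_0} \corank_{\Zp} \coh^1(F_{\infty,w}, M^{\prime,*}).
\end{align*}
Next, Theorem~\ref{theo:congruences} says that, since the $\mu$-invariants of the primitive Selmer groups vanish, the two left-hand sides above are equal. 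Equating the right-hand sides and rearranging then produces the claimed identity.

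There is no real obstacle: all the machinery has already been done in the preceding results. The only point to be careful about is that the $\mu=0$ hypothesis is used twice — first to transfer the vanishing from the primitive to the $\Sigma_0$-non-primitive Selmer groups (via Corollary~\ref{coro:relationSelmer}), so that $\lambda$-invariants can be replaced by $\Zp$-coranks, and second to invoke the equality of $\lambda$-invariants furnished by Theorem~\ref{theo:congruences}.
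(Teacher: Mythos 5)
Your route is exactly the one the paper intends: the corollary is stated as a direct consequence of Theorem~\ref{theo:congruences} and Corollary~\ref{coro:relationSelmer}, and your steps (transfer of $\mu=0$ from the primitive to the $\Sigma_0$-non-primitive Selmer group, identification of $\lambda$-invariants with $\Zp$-coranks once $\mu=0$, the corank formula, and the equality of the non-primitive $\lambda$-invariants) are all correct and are precisely the intended bookkeeping.

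There is, however, one point where "rearranging then produces the claimed identity" is not accurate as written. Equating your two right-hand sides gives
\[
\lambda + \sum_{w\in\Sigma_0}\corank_{\Zp}\coh^1(F_{\infty,w},M^*) \;=\; \lambda^\prime + \sum_{w\in\Sigma_0}\corank_{\Zp}\coh^1(F_{\infty,w},M^{\prime,*}),
\]
i.e.\ $\lambda-\lambda^\prime$ equals the \emph{difference of the local sums taken the other way around}, whereas the displayed statement of the corollary has minus signs in front of each local sum and is equivalent to $\lambda-\lambda^\prime$ equalling that difference with the opposite sign. The two versions coincide only if $\sum_{w\in\Sigma_0}\corank_{\Zp}\coh^1(F_{\infty,w},M^*)=\sum_{w\in\Sigma_0}\corank_{\Zp}\coh^1(F_{\infty,w},M^{\prime,*})$, and nothing in \textbf{(Cong.)} guarantees this: by the remark following the corollary these coranks are read off from $P_v(X)=\det(1-\frob_v X\mid (\MM_p^*)_{F_{v,\mathrm{unr}}})$, which depends on the $p$-adic representations at $w$ and not only on their reductions modulo $p$. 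So either you should record that what your argument actually proves is the plus-sign identity above (the Greenberg--Vatsal/Kim-type statement, and almost certainly what is meant, the printed signs being a slip), or you would owe a separate argument that the two local sums agree, which you have not given and which is not available in general. Apart from making this explicit, the proof is complete and matches the paper's.
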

\begin{rema}
	In Corollary~\ref{coro:congruences}, we can compute the $\Zp$-coranks of $\coh^1(F_{\infty,w},M^*)$ and
$\coh^1(F_{\infty,w},M^{\prime,*})$ thanks to \cite[Proposition
2.4]{GreenbergVatsal}. Let $v$ be the prime of $F$ under $w$ and $\ell$
be the rational prime (different from $p$) under $v$.
We denote by $(\MM_p^*)_{F_{v,\mathrm{unr}}}$ the maximal quotient of
$\MM_p^*$ on which the group $\gal(\overline{F_v}/F_{v,\mathrm{unr}})$
acts trivially and we set $P_v(X)=\det(1-\frob_v X |
(\MM_p^*)_{F_{v,\mathrm{unr}}}) \in \Zp[X]$. Then the corank of
$\coh^1(F_{\infty,w},M^*)$ is equal to the multiplicicy of
$\ell^{-[F_v:\Qp]}$ as root of $P_v(X)$ in $\Fp[X]$.
\end{rema}

\bibliographystyle{amsalpha}
\bibliography{references.bib}
\end{document}